\pdfoutput=1 
\documentclass[a4paper, 10pt]{amsart}

\usepackage[protrusion=true,expansion=true]{microtype}
\usepackage{amsthm,amsfonts,amsmath,amssymb}
\usepackage{pxfonts}
\usepackage{euscript}
\usepackage[utf8]{inputenc}
\usepackage{bbold,bbm}
\usepackage{hyperref}
\hypersetup{
    colorlinks,
    linkcolor={red!50!black},
    citecolor={blue!50!black},
    urlcolor={blue!80!black}
}
\usepackage{graphics}
\usepackage{epstopdf} 
\usepackage{xypic}
\usepackage{float}
\usepackage{graphicx}
\usepackage{comment}
\usepackage[most]{tcolorbox}
\usepackage{mathtools} 

\newtheorem{thm}{Theorem}[section]
\newtheorem{lem}[thm]{Lemma}
\newtheorem{prop}[thm]{Proposition}
\newtheorem{cor}[thm]{Corollary}

\newtheorem{prop-defn}[thm]{Proposition-Definition}

\theoremstyle{definition}
\newtheorem{defn}[thm]{Definition}

\newtheorem{rem}[thm]{Remark}

\newtheorem{notat}[thm]{Notation}
\theoremstyle{remark}

\numberwithin{equation}{section}
\numberwithin{thm}{section}

\newcommand{\be}{\begin{equation}}
\newcommand{\ee}{\end{equation}}
\newcommand{\bnu}{\begin{enumerate}}
\newcommand{\enu}{\end{enumerate}}

\newcommand{\C}{\mathbb{C}}

\newcommand\CA           {\EuScript{A}}

\newcommand\CC           {\EuScript{C}}

\newcommand\CI         {\EuScript{I}}

\newcommand\CL          {\EuScript{L}}
\newcommand\CM          {\EuScript{M}}
\newcommand\CN         {\EuScript{N}}

\newcommand\CP         {\EuScript{P}}

\newcommand\CR         {\EuScript{R}}

\newcommand{\bB}{\mathbf{B}}

\newcommand{\vN}{\mathbf{vN}}

 \DeclareMathOperator{\End}{End}

 \DeclareMathOperator{\bdd}{bdd}

 \DeclareMathOperator{\BMod}{BiMod}
 \DeclareMathOperator{\sBMod}{s-BiMod}

 \DeclarePairedDelimiterX\braket[2]{\langle}{\rangle}{#1 \delimsize\vert #2}

\newcommand{\one}{\mathbb{1}}
\newcommand{\zero}{\mathbb{0}}

\newcommand{\dfeq}{\vcentcolon=}
\newcommand{\mref}[1]{(\ref{#1})}
\newcommand{\rt}[1]{\underset{#1}{\otimes}}

\newif\ifFinal
\Finalfalse
\ifFinal
    \excludecomment{hide}
\else
    \includecomment{hide}
\fi

\begin{document}
\title[Realization of rigid C$^*$-bicategories as bimodules]{Realization of rigid C$^*$-bicategories as bimodules\\ over type II$_1$ von Neumann algebras}
\author{Luca Giorgetti$^{1,2}$, Wei Yuan$^{3,4}$}

\address{1 Department of Mathematics, Vanderbilt University \\
1326 Stevenson Center, Nashville, TN 37240, US}

\address{2 Dipartimento di Matematica, Universit\`a di Roma Tor Vergata\\
Via della Ricerca Scientifica, 1, I-00133 Roma, Italy}
\email{giorgett@mat.uniroma2.it}

\address{3 Institute of Mathematics, Academy of Mathematics and Systems Science \\
Chinese Academy of Sciences, Beijing, 100190, China}

\address{4 School of Mathematical Sciences, University of Chinese Academy of Sciences \\
Beijing 100049, China}
\email{wyuan@math.ac.cn}

\begin{abstract}
We prove that every rigid C$^*$-bicategory with finite-dimensional centers (finitely decomposable horizontal units) can be realized as Connes' bimodules over finite direct sums of II$_1$ factors. In particular, we realize every multitensor C$^*$-category as bimodules over a finite direct sum of II$_1$ factors.
\end{abstract}

\subjclass[2010]{} 
\keywords{2-C$^*$-category, Multitensor C$^*$-category, C$^*$-Frobenius algebra}
\thanks{Research by the first author (L.G.) is supported by the European Union's Horizon 2020 research and innovation programme under Grant Agreement 795151 ``beyondRCFT'' H2020-MSCA-IF-2017. Research by the second author (W.Y.) is supported by the NSFC under grant numbers 11971463, 11871303, 11871127.}

\maketitle
\tableofcontents

\section{Introduction}

C$^*$-bicategories are a generalization of monoidal C$^*$-categories \cite{glr, dr, lro} to the higher categorical world \cite{lei, la, jy}. 
A bicategory (also called 2-category if we consider only strict ones) can be depicted as nodes on a two-dimensional plane (the objects), oriented links or arrows between them (the 1-morphisms), and arrows between arrows with common source and target nodes (the 2-morphisms). Both 1-morphisms and 2-morphisms can be concatenated giving rise to a pictorial representation of two composition products, respectively called horizontal and vertical composition. A bicategory is called a C$^*$-bicategory when the 2-morphisms are complex vector spaces endowed with an antilinear orientation-reversing involution and a Banach space norm, which are compatible with the composition and which fulfil the C$^*$ identity $\|f^*f\| = \|f\|^2$. 
Assuming the C$^*$-structure is maybe the most restrictive hypothesis that we make in this paper. 
It is motivated by the applications to quantum physics (quantum field theory \cite{bklr}, quantum information \cite{longl}, quantum computation \cite{wan}) and operator algebras \cite{lands}, where the role of 2-morphisms is usually played by bounded operators on a Hilbert space. Going down in the tower of generalizations, a C$^*$-bicategory with only one object is a monoidal C$^*$-category, a (monoidal) C$^*$-category with only one object is an (abelian) unital C$^*$-algebra.

In a bicategory, one always assumes the existence of horizontal and vertical units for the respective composition products. For 2-morphisms, the notion of inverse is the ordinary algebraic one: two 2-morphisms are inverse to one another if their composition on either side is the respective unit 2-morphism. For 1-morphisms, the ordinary notion of inversion is given by asking that either composition is isomorphic to the respective unit 1-morphism, rather than equal to it, and it is often replaced by the weaker notion of left/right conjugation \cite{lro} (also called dualization \cite{saa} or adjunction \cite{ml} in the special case of functors). Two 1-morphisms are dual to one another if their compositions contain the respective unit 1-morphism, and the containment relation is given by a pair of 2-morphisms solving the so-called conjugate equations (also sometimes called duality equations or zig-zag equations). In a C$^*$-bicategory, 2-morphisms can be unitarily invertible and there is no distinction between left and right dualizability for 1-morphisms. A second assumption that we make in this paper, which is more customary in the literature, is the existence of a dual for every 1-morphism. We refer to this property as rigidity. In concrete examples of tensor C$^*$-categories and C$^*$-bicategories coming from operator algebras, it is related to the finiteness of the Jones index \cite{vfr} for the objects or 1-morphisms in question.

Conversely, it is a natural and in various guises widely studied question to ask, whether every such abstract C$^*$-categorical structure can be represented as endomorphisms or bimodules over C$^*$-algebras or von Neumann algebras, with finite Jones index, and what is the type of algebras that may arise in this way \cite{p5, pa, delphi, pu, ps}. There are several results in the literature \cite{hy, y0, fv, bhp, fr, lw} providing realizations in the case of rigid monoidal C$^*$-categories with simple (also called irreducible) unit. The latter will be referred to as tensor C$^*$-categories in the following.

The purpose of the present work is to extend these realization results in two directions: on the one hand we allow for finitely decomposable tensor units (e.g., multifusion or multitensor C$^*$-categories), on the other hand we allow for bicategories with more than one object (thus providing a realization for rigid C$^*$-bicategories). 

We actually do not give a particular realization for every such structure, but we produce a machinery that lifts any realization technique from tensor C$^*$-categories to rigid C$^*$-bicategories with finitely decomposable horizontal units.
This finite decomposability assumption, that we call finite-dimensionality of the centers of the objects in the rigid C$^*$-bicategory, is equivalent to the finite-dimensionality of the 2-morphisms vector spaces.
We need to assume it in order to have a well-behaved theory of intrinsic (matrix) dimension for abstract rigid C$^*$-bicategories, and a canonical choice of the so-called standard solutions of the conjugate equations with certain optimization, scalarity and functoriality properties.
The von Neumann algebras we obtain, on which the C$^*$-bicategory is realized as bimodules, are indeed finite direct sums of type II$_1$ factors.
We deal only with small categories and bicategories. Thanks to a previous work \cite{lw}, we do not have to add any cardinality constraint on the spectrum of the categories (the set of unitary equivalence classes of simple objects or simple 1-morphisms) that one can realize. 

The paper is organized as follows. In the first part of Section \ref{sec:Cstarbicats}, we review some basics of rigid C$^*$-bicategories and multitensor C$^*$-categories. 

In Section \ref{sec:stdsol}, we recall the definition of standard solution of the conjugate equations, extend it to non-connected 1-morphisms and show that the tensor product of standard solutions is again a standard solution in the case of tensor products of a 1-morphism with its dual.

In Section \ref{sec:specialalgs}, we review the notion of (C$^*$-Frobenius) algebra and bimodule in a C$^*$-bicategory. We introduce special C$^*$-Frobenius algebras and special bimodules and we show that every C$^*$-Frobenius algebra is isomorphic to a special one, generalizing a result of \cite{bklr}. Similarly, every bimodule over a special C$^*$-Frobenius algebra is isomorphic to a special one.

In Section \ref{sec:specialbimods}, we show that special C$^*$-Frobenius algebras and special bimodules in a rigid C$^*$-bicategory form a C$^*$-bicategory with respect to a relative horizontal composition of bimodules. This new C$^*$-bicategory of special bimodules has the same properties of the former C$^*$-bicategory (rigidity, finite-dimensionality of the centers), and it contains it as a full sub-C$^*$-bicategory.   

In Section \ref{sec:multisbimcat}, which is the main technical part of the paper, we introduce standard C$^*$-Frobenius algebras and we show that every indecomposable multitensor C$^*$-category can be realized as a special bimodule category over a fixed standard C$^*$-Frobenius algebra living in a tensor C$^*$-category. In this sense, multitensor C$^*$-categories are finite-dimensional amplifications of tensor C$^*$-categories.

In Section \ref{sec:connes_bim}, we review the theory of Connes' bimodules over finite direct sums of II$_1$ factors, namely we spell out the properties of the rigid C$^*$-bicategory of finite bimodules over type II$_1$ von Neumann algebras with finite-dimensional centers.

In Section \ref{sec:qsystemsinvN}, we provide a proof of the fact that a C$^*$-Frobenius algebra in the multitensor C$^*$-category of bimodules over a fixed type II$_1$ von Neumann algebra with finite-dimensional center gives an extension of that algebra, generalizing the analogous result in the factor case. Moreover, we prove our realization result for indecomposable multitensor C$^*$-categories, from which a corollary on the realizability of multifusion C$^*$-categories over finite direct sums of the hyperfinite II$_1$ factor immediately follows. 

Section \ref{sec:pfmainthm} is dedicated to the proof of the main result:

\begin{thm}\label{thm:real_2_cat}
Every rigid C$^*$-bicategory with finite-dimensional centers can be realized as finite bimodules over finite direct sums of II$_1$ factors.
\end{thm}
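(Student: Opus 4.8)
The plan is to bootstrap from the realization of rigid tensor C$^*$-categories as finite Connes' bimodules over II$_1$ factors (\cite{lw}, also subsumed by Section~\ref{sec:qsystemsinvN}) using the C$^*$-Frobenius algebra calculus of Sections~\ref{sec:specialalgs}--\ref{sec:multisbimcat}, which encodes every hom-category of a connected piece of $\CB$ as a category of bimodules inside the $1$-endomorphism category of one chosen object---exactly the mechanism by which Section~\ref{sec:multisbimcat} presents an indecomposable multitensor C$^*$-category over a tensor one. First I would replace $\CB$ by the rigid C$^*$-bicategory $\CB^{\natural}$ obtained from it by an object-level Cauchy completion: the objects of $\CB^{\natural}$ are the simple summands (``atoms'') $u\prec\one_a$ of the horizontal units of $\CB$, and $\CB^{\natural}(u,v)\dfeq v\otimes\CB(a,b)\otimes u$ for $u\prec\one_a$, $v\prec\one_b$. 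Then $\CB^{\natural}$ is again rigid with finite-dimensional centers, but now all its horizontal units are \emph{simple}; moreover, since only finitely many atoms lie over each object and, under Connes fusion, ${}_{\bigoplus_v P_v}\mathrm{FBim}_{\bigoplus_u P_u}$ splits as $\bigoplus_{u,v}{}_{P_v}\mathrm{FBim}_{P_u}$ compatibly with horizontal composition and units, a realization of $\CB^{\natural}$ over a family $\{P_u\}$ of von Neumann algebras produces a realization of $\CB$ over the family $\{N_a\dfeq\bigoplus_{u\prec\one_a}P_u\}$. So it suffices to realize $\CB^{\natural}$, and, decomposing it into connected components, to realize a \emph{connected} rigid C$^*$-bicategory $\CB$ with simple horizontal units.

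For such a $\CB$, fix a base object $a_0$ and put $\CC\dfeq\CB(a_0,a_0)$; since $\one_{a_0}$ is simple, $\CC$ is a rigid tensor C$^*$-category, possibly with infinitely many simple objects, each of finite intrinsic dimension. By \cite{lw} (or Section~\ref{sec:qsystemsinvN}) there is a unitary equivalence of tensor C$^*$-categories $\CC\simeq{}_{M_0}\mathrm{FBim}_{M_0}$ with $M_0$ a II$_1$ factor. For each object $b$, connectedness provides a nonzero $1$-morphism $X_b\colon a_0\to b$ (with $X_{a_0}=\one_{a_0}$); since $\one_b$ is simple and the evaluation $X_b\otimes\overline{X_b}\to\one_b$ of a standard solution is nonzero, it is a scalar multiple of a coisometry, whence $\one_b\prec X_b\otimes\overline{X_b}$, and, endowing $Q_b\dfeq\overline{X_b}\otimes X_b$ with its standard C$^*$-Frobenius algebra structure (Sections~\ref{sec:stdsol} and \ref{sec:multisbimcat}), the pair $(X_b,\overline{X_b})$ becomes a Morita equivalence between $\one_b$ and $Q_b$. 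Hence in the C$^*$-bicategory of special C$^*$-Frobenius algebras and special bimodules of Section~\ref{sec:specialbimods} the object $(b,\one_b)$ is unitarily equivalent to $(a_0,Q_b)$; consequently $\CB$ is equivalent, as a C$^*$-bicategory, to the full sub-bicategory on the objects $\{Q_b\}$, whose hom-categories are the $Q_c$--$Q_b$-bimodules in $\CC$ and whose horizontal composition is the relative tensor product $\rt{Q_b}$.

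Now I would transport this picture through $\CC\simeq{}_{M_0}\mathrm{FBim}_{M_0}$: each $Q_b$ becomes a finite C$^*$-Frobenius algebra $\widetilde{Q}_b$ in ${}_{M_0}\mathrm{FBim}_{M_0}$, which by Section~\ref{sec:qsystemsinvN} (generalizing the factor case of the Q-system/extension correspondence) is, up to isomorphism, ${}_{M_0}L^2(N_b)_{M_0}$ for a finite-index extension $M_0\subseteq N_b$; as $M_0$ is of type II$_1$ and the index is finite, $N_b$ is of type II$_1$ with finite-dimensional center, hence a finite direct sum of II$_1$ factors (in fact a single II$_1$ factor here, since $\End({}_{Q_b}Q_b{}_{Q_b})\cong\End(\one_b)=\C$). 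The same correspondence identifies the $Q_c$--$Q_b$-bimodules in $\CC$ with ${}_{N_c}\mathrm{FBim}_{N_b}$ and $\rt{Q_b}$ with Connes fusion over $N_b$, so, combining with the previous paragraph, $\CB(b,c)\simeq{}_{N_c}\mathrm{FBim}_{N_b}$ compatibly with horizontal composition, i.e.\ $\CB$ is realized over the family $\{N_b\}$ inside the rigid C$^*$-bicategory of finite Connes' bimodules of Section~\ref{sec:connes_bim}. Reassembling the connected components of $\CB^{\natural}$, and then the atoms over each object of $\CB$ (by the direct sums $N_a=\bigoplus_{u\prec\one_a}P_u$ above), yields the theorem.

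I expect the main obstacle to be the input from Section~\ref{sec:qsystemsinvN}: extracting from an abstract finite C$^*$-Frobenius algebra over $M_0$ an honest finite-index von Neumann algebra extension of the correct type, and matching the whole bimodule calculus---in particular horizontal composition---with Connes' relative tensor product. Within Section~\ref{sec:pfmainthm} itself the delicate points are mostly bookkeeping: verifying $(b,\one_b)\cong(a_0,Q_b)$ and the compatibility of $\rt{Q_b}$ with horizontal composition for the chosen $X_b$, and choosing the factors realizing distinct connected components of $\CB^{\natural}$ to be pairwise not finitely connected, so that the assembled realization presents $\CB$ as a \emph{full} sub-bicategory of Connes' bimodules; there are enough such II$_1$ factors (and enough freedom in the construction of \cite{lw}) for this to be arrangeable.
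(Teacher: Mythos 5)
Your proposal is correct and is essentially the paper's own argument: you decompose the horizontal units into summands regarded as C$^*$-Frobenius algebras inside $\sBMod_\bB$, group them by the connectivity relation, compress each class onto algebras of the form $\overline{X}\otimes X$ living in a single tensor C$^*$-category realized via \cite{lw}, convert these algebras into type II$_1$ extensions as in Section \ref{sec:qsystemsinvN}, and use that $\vN$ is biequivalent to $\sBMod_\vN$ (Lemma \ref{lem:von_bim_alg_closed}); the only cosmetic difference is that you decompose the units into \emph{simple} summands, which merges the paper's two compression steps (rigid C$^*$-bicategory to indecomposable multitensor C$^*$-category via Lemmas \ref{lem:indecom_f_alg} and \ref{lem:2-cat_std_Frob_bimod_equ}, then to a tensor corner via Corollary \ref{cor:ind_ten_equ_alg_ten}) into one. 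Your closing worry is unnecessary: the theorem only asserts a locally fully faithful $*$-bifunctor, and since the cross-component blocks of the assembled bimodules vanish by construction, no choice of pairwise ``non-connected'' factors for distinct connected components is required.
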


A natural question is if arbitrary rigid C$^*$/W$^*$-bicategories can be realized as finite bimodules over von Neumann algebras, dropping the finite-dimensionality assumption on the centers, thus on the Banach spaces of 2-morphisms, that we made in this note. 

Another (presumably much harder) long-standing open question is about the realizability over hyperfinite von Neumann algebras of countably generated (rigid) tensor and multitensor C$^*$-categories.

\section{Rigid C$^*$-bicategories}\label{sec:rigid_Cstar_bicats}\label{sec:Cstarbicats}

In this section, we review some basic facts about rigid C$^*$-bicategories and some results in \cite{lr} and we set our notation along the way. Throughout the paper, we deal with categories and bicategories whose objects (0-morphisms) and $n$-morphisms, $n=1,2$, are sets. 

For a category $\CC$, we denote its objects by $X,Y,Z,\ldots$, we use $\CC(X,Y)$ to denote the set of morphisms from $X$ to $Y$. We denote by $1_X$ the unit morphism in $\CC(X,X)$. 

For a bicategory $\bB$, see Chapter XII in \cite{ml} for the definition, we use $\CL, \CM, \CN, \ldots$ to denote the objects and write 1-morphisms as $X,Y,Z,\ldots$. For two objects $\CM$ and $\CN$, the category of 1-morphisms from $\CM$ to $\CN$ is denoted by $\bB(\CM, \CN)$. The 2-morphisms in $\bB(\CM, \CN)(X,Y)$, where $X,Y \in \bB(\CM, \CN)$, will be denoted by $f,g,r,s,t,\alpha,\beta,\gamma,\ldots$. The horizontal composition (or tensor multiplication) functor 
\begin{align}\label{eq:hor_comp}
  \bB(\CM, \CN) \times \bB(\CL, \CM) \to \bB(\CL, \CN)
\end{align}
is written as $\otimes$.
More explicitly, for $X \in \bB(\CL, \CM)$ and $Y \in \bB(\CM, \CN)$ the product is $Y \otimes X \in \bB(\CL, \CN)$, as if $X:\CL \to \CM$, $Y:\CM \to \CN$ were functions and $Y \otimes X:\CL\to\CN$ their composition. 
For $s \in \bB(\CL, \CM)(X_1, X_2)$ and $t \in \bB(\CM, \CN)(Y_1, Y_2)$ we write $t \otimes s \in \bB(\CL, \CN)(Y_1 \otimes X_1,Y_2 \otimes X_2)$. 
For $\CM$ in $\bB$, we use $\one_\CM$ to denote the horizontal unit 1-morphism in $\bB(\CM, \CM)$. For $X\in \bB(\CL,\CM)$, we denote again by $1_X$ the vertical unit 2-morphism in $\bB(\CL,\CM)(X,X)$. The vertical composition of $f \in \bB(\CL, \CM)(X_1, X_2)$ and $g \in \bB(\CL, \CM)(X_2, X_3)$ is denoted by $gf \in \bB(\CL, \CM)(X_1, X_3)$.   

Recall that a C$^*$-category, see \cite{lro}, is a $\C$-linear category $\CC$ such that the sets of morphisms are complex Banach spaces with an antilinear contravariant $*$-map satisfying $f^{**} = f$, $(gf)^* = f^*g^*$ and $\|gf\| \leq \|f\| \|g\|$, $\|f^*f\| = \|f\|^2$, $f^*f \geq 0$ for every $f \in \CC(X,Y), g \in \CC(Y,Z)$. A $\C$-linear functor $F$ between C$^*$-categories is a $*$-functor if $F(f^*) = F(f)^*$. In particular, each $\CC(X,X)$ is a unital C$^*$-algebra. A C$^*$-category can always be completed to a C$^*$-category which is closed under finite direct sums and subobjects, see Appendix in \cite{lro}.

\begin{defn}
A bicategory $\bB$ is a \textbf{C$^*$-bicategory} if 
\begin{enumerate} 
\item $\bB(\CM, \CN)$ is a C$^*$-category which is closed under finite direct sums of 1-morphisms and sub-1-morphisms, for every pair of objects $\CM$, $\CN$ in $\bB$. The horizontal and vertical composition of 2-morphisms $(t, s) \mapsto t \otimes s$ and $(f, g) \mapsto gf$, whenever defined, are bilinear. The antilinear involutive contravariant $*$-map $s \in \bB(\CM, \CN)(X, Y) \mapsto s^* \in \bB(\CM, \CN)(Y, X)$ is compatible with the horizontal composition, i.e., $(t \otimes s)^* = t^* \otimes s^*$. 
\item The associators $\alpha_{Z,Y,X}:(Z \otimes Y) \otimes X \to Z \otimes (Y \otimes X)$ and the left and right unitors $\rho_X: \one_{\CN} \otimes X \to X$ and $\lambda_X: X \otimes \one_{\CM} \to X$, for $X\in \bB(\CM,\CN)$, are unitary. 
\end{enumerate}
\end{defn}

If all the associators and unitors are identity natural transformations, $\bB$ is called a \textit{strict} C$^*$-bicategory or \textit{2-C$^*$-category}, or also \textit{C$^*$ 2-category}. The definition of $2$-C$^*$-category appears in Section 7 of \cite{lro}, see also \cite{glr, z, bcls, lr}. By the coherence theorem for bicategories, see, e.g., Chapter 2 in \cite{ng}, every two parenthesized products of 1-morphisms with insertions of unit 1-morphisms can be canonically identified. Therefore we can safely ignore the associators and unitors in the calculations, or equivalently, we may assume that the bicategory is strict. We shall do so from now on, without further mention.

A bifunctor is a morphism between bicategories, see Section XII.6 in \cite{ml}, also called pseudofunctor in \cite{ben} and in Section 4.1 of \cite{jy}.

\begin{defn}
If $\bB$ and $\bB'$ are two C$^*$-bicategories, a bifunctor $F:\bB \to \bB'$ is a \textbf{$*$-bifunctor} if
\begin{enumerate}
\item Each local functor $F_{\CM,\CN} : \bB(\CM,\CN) \to \bB'(F(\CM),F(\CN))$ is a $*$-functor.
\item The laxity constraints, also called tensorator $F^2_{X,Y} : F_{\CM,\CN}(X) \otimes F_{\CL,\CM}(Y) \to F_{\CL,\CN}(X\otimes Y)$ and unitor $F^0_{\CM} : {\one}_{F(\CM)} \to F_{\CM,\CM}(\one_\CM)$ are unitary.
\end{enumerate}
A $*$-bifunctor is called a \textbf{$*$-biequivalence} if the underlying bifunctor is a biequivalence.
\end{defn}

Recall that in a C$^*$-bicategory, for every $X\in\bB(\CM, \CN)$ we have that $\bB(\CM, \CN)(X, X)$ is a unital C$^*$-algebra and $\bB(\CM, \CM)(\one_\CM, \one_\CM)$ is in addition abelian.

\begin{defn}\label{def:fin_dim_centers}
A C$^*$-bicategory $\bB$ is said to have \textbf{finite-dimensional centers} if every $\bB(\CM, \CM)(\one_\CM, \one_\CM)$ is finite-dimensional, see Definition 8.15 in \cite{lr}.
\end{defn}

\begin{defn}\label{def:rigid}
A C$^*$-bicategory $\bB$ is \textbf{rigid} if for every 1-morphism $X \in \bB(\CM, \CN)$, there exists a (unique up to isomorphism) 1-morphism $\overline{X} \in \bB(\CN, \CM)$, called a \textbf{dual} (or conjugate) of $X$, and a solution $(\omega,\overline{\omega})$ (a pair of 2-morphisms) of the so-called \textbf{conjugate equations} \eqref{equ:conjeq} for $X$ and $\overline{X}$. 

Namely, $\omega \in \bB(\CM,\CM)(\one_\CM, \overline{X} \otimes X)$ and $\overline{\omega} \in \bB(\CN,\CN)(\one_\CN, X \otimes \overline{X})$ are such that
\begin{align}\label{equ:conjeq}
  (X \xrightarrow{1_X \otimes \omega} X \otimes \overline{X} \otimes X \xrightarrow{\overline{\omega}^{\,*} \otimes 1_X} X) = 1_X, \quad
  (\overline{X} \xrightarrow{\omega \otimes 1_{\overline{X}}} \overline{X} \otimes X \otimes \overline{X} \xrightarrow{1_{\overline{X}} 
  \otimes \overline{\omega}^{\, *}} \overline{X}) = 1_{\overline{X}}.
\end{align}
\end{defn}

Recall that if $\bB$ is a rigid C$^*$-bicategory with finite-dimensional centers, then by Proposition 8.16 in \cite{lr}, $\bB(\CM, \CN)(X, Y)$ is a finite-dimensional vector space for every $X,Y \in \bB(\CM, \CN)$. Moreover, $\bB$ is semisimple, i.e., every 1-morphism in $\bB(\CM, \CN)$ is a finite direct sum of simple 1-morphisms in $\bB(\CM, \CN)$, and every $\bB(\CM, \CM)$ is a multitensor C$^*$-category. 

Recall that, by definition, a \textbf{multitensor C$^*$-category} is a \textit{rigid} C$^*$-bicategory with finite-dimensional centers which has only one object $\CM$. In particular, the tensor unit $\one := \one_\CM$ is finitely decomposable. A \textbf{tensor C$^*$-category} is a multitensor C$^*$-category with simple tensor unit.

More generally, a multitensor C$^*$-category is called \textbf{indecomposable} if it is neither zero nor the direct sum of two non-zero multitensor C$^*$-categories. Every multitensor C$^*$-category is a finite direct sum of indecomposable multitensor C$^*$-categories. See Chapter 4 in \cite{egno}.

Let $\CC$ be an \textit{indecomposable} multitensor C$^*$-category. Let $\one = \oplus_i\one_i$ where $\one_i$ are simple. Then by Theorem 2.5.1 in \cite{kz}, see also Remark 4.3.4 in \cite{egno}, $\CC$ has a decomposition $\CC = \oplus_{i,j} \CC_{ij}$ with $\CC_{ij} \dfeq \one_i \otimes \CC \otimes \one_j$ such that 
\begin{enumerate}
\item Every $\CC_{ij}$ is not zero.
\item Every simple object of $\CC$ belongs to some $\CC_{ij}$ and every $\CC_{ii}$ is a tensor C$^*$-category with tensor unit $\one_i$.
\item The tensor multiplication maps $\CC_{ij} \times \CC_{kl}$ into $\CC_{il}$, and the image is zero unless $j=k$. 
\item If $X \in \CC_{ij}$, then $\overline{X} \in \CC_{ji}$.
\end{enumerate}

\subsection{Standard solutions of the conjugate equations}\label{sec:stdsol}\,\\ \vspace{-2mm}

Let $X$ be a 1-morphism with dual $\overline{X}$ in a rigid C$^*$-bicategory $\bB$. Assume that $\bB$ has finite-dimensional centers. Among all the solutions $(\omega,\overline{\omega})$ of the conjugate equations \mref{equ:conjeq}, there is a particular class, called \textit{standard} solutions, with good functorial and optimization properties. They have been introduced in \cite{lro} for tensor C$^*$-categories, and extended in \cite{lr} to the case of multitensor categories and rigid 2-C$^*$-categories with finite-dimensional centers.
We recall the notion of \textit{connectedness} for 1-morphisms, see Definition 8.6 in \cite{lr}.
 
\begin{defn}
A 1-morphism $Y \in \bB(\CM, \CN)$ is called \textbf{connected} if 
\begin{align*}
    [1_Y \otimes \bB(\CM, \CM)(\one_\CM, \one_\CM)] \cap [\bB(\CN, \CN)(\one_\CN, \one_\CN) \otimes 1_Y] = \C 1_Y.
\end{align*}
\end{defn}

\begin{defn}\label{def:stdsolconn}
For a connected 1-morphism $Y \in \bB(\CM, \CN)$ with dual $\overline{Y} \in \bB(\CN,\CM)$, a solution $(\gamma, \overline{\gamma})$ of the conjugate equations \mref{equ:conjeq} is called \textbf{standard} (an appropriate terminology would also be minimal) if 
\begin{align*}
    \|\gamma\|^2 = \|\overline{\gamma}\|^2 = \inf_{(\omega, \overline{\omega})}\|\omega\|\|\overline{\omega}\|
\end{align*}
where the infimum is taken over all the solutions $(\omega, \overline{\omega})$ of the conjugate equations.
\end{defn}

\begin{rem}
The definition of standardness above is actually equivalent to the original Definition 8.29 in \cite{lr}, by Proposition 8.30 and Theorem 8.44 in \cite{lr}. We only need to observe that tensoring with $1_Y$ is isometric on $\omega^* \omega$ and $\overline{\omega}^{\,*} \overline{\omega}$, i.e., $\|1_Y \otimes (\omega^* \omega)\| = \|\omega^* \omega\|$ and $\|(\overline{\omega}^{\,*} \overline{\omega}) \otimes 1_Y\| = \|\overline{\omega}^{\,*} \overline{\omega}\|$, see, e.g., Lemma 1.13 in \cite{z}.

In the case at hand, standard solutions are shown to exist in \cite{lr} (realizing the minimum) and to be unique up to unitary isomorphism, see Lemma 8.35 in \cite{lr}. The notion of standard solutions relies on the more fundamental theory of \textbf{matrix dimension} for rigid 2-C$^*$-categories with finite-dimensional centers, see Definition 8.25 in \cite{lr}. The finite-dimensionality assumption guarantees the existence and uniqueness of the normalized Perron-Frobenius eigenvectors (in $L^2$) associated with the matrix dimension, which are crucial for the previously mentioned Definition 8.29 in \cite{lr}, see also Section 2 in \cite{g}. In more general cases than this, the existence of a good (functorial/optimal) notion of standardness for solutions of the conjugate equations in a rigid 2-C$^*$/W$^*$-category is yet to be established.
\end{rem}

By Lemma 8.19 in \cite{lr}, every 1-morphism $X \in \bB(\CM, \CN)$ is a finite direct sum of \textit{connected} 1-morphisms, i.e., there are finitely many connected $X_i \in \bB(\CM, \CN)$, uniquely determined up to unitary isomorphism and up to permutation of the indices, such that $X = \oplus_i X_i$. Moreover, $\overline{X}_i \in \bB(\CN,\CM)$ are connected, $\overline{X} = \oplus_i \overline{X}_i$ holds, and 
\begin{align*}
    X_i \otimes \overline{X}_j = \zero_\CN, \quad \overline{X}_j \otimes X_i = \zero_\CM, \quad i \neq j 
\end{align*}
where $\zero_\CM \in \bB(\CM,\CM)$ and $\zero_\CN \in \bB(\CN,\CN)$ are the \textit{zero} 1-morphisms. Thus 
\begin{align*}
\overline{X} \otimes X = \oplus_i (\overline{X}_i \otimes X_i), \quad X \otimes \overline{X} = \oplus_i (X_i \otimes \overline{X}_i). 
\end{align*}

\begin{defn}\label{def:stdsol}
For $X \in \bB(\CM, \CN)$ with dual $\overline{X} \in \bB(\CN,\CM)$, we define a \textbf{standard solution} $(\gamma_{X}, \overline{\gamma}_{X})$ of the conjugate equations \mref{equ:conjeq} as
\begin{align*}
    \gamma_X \dfeq \oplus_i \gamma_{X_i}, \quad \overline{\gamma}_X \dfeq \oplus_i \overline{\gamma}_{X_i}
\end{align*}
where $X_i$ and $\overline{X}_i$ are as above, and $(\gamma_{X_i}, \overline{\gamma}_{X_i})$ is a standard solution of the conjugate equations in the sense of Definition \ref{def:stdsolconn}.
The number $d_X \dfeq  \|\gamma_X\|^2$ is called the \textbf{scalar dimension} of $X$. It fulfills $d_X = d_{\overline{X}}$ and $d_X = \underset{i}{\max}\, d_{X_i}$. 
\end{defn}

\begin{rem}
Equivalently, the scalar dimension can be defined as the $L^2$-operator norm of the matrix dimension, see Definition 8.25 and Proposition 8.30 in \cite{lr}.
\end{rem}

The following facts about tensoring standard solutions are not explicitly stated in \cite{lr}. We prove them for later use. 

\begin{lem}\label{lem:dsubmult}
For a connected 1-morphism $X \in \bB(\CM, \CN)$ with dual $\overline{X} \in \bB(\CN,\CM)$, we have $d_{\overline{X} \otimes X} = d_X^2$. 
More generally, for connected 1-morphisms $X \in \bB(\CM, \CN)$ and $Y \in \bB(\CL, \CM)$, we have $d_{X\otimes Y} \leq d_{X}d_{Y}$.
\end{lem}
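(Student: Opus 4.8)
The plan is to prove the two statements in turn, using the explicit construction of standard solutions and the characterization of $d_X$ as an $L^2$-norm of the matrix dimension (Definition \ref{def:stdsol} and the subsequent remark). For the first identity $d_{\overline{X} \otimes X} = d_X^2$, I would exhibit an explicit solution $(\omega,\overline{\omega})$ of the conjugate equations for $\overline{X} \otimes X$ (whose dual is $\overline{X} \otimes X$ itself, using $\overline{\overline{X} \otimes X} \cong \overline{X} \otimes X$ up to the natural identifications) built out of a standard solution $(\gamma_X,\overline{\gamma}_X)$ for $X$. The natural candidate is $\omega \dfeq (1_{\overline{X}} \otimes \gamma_X \otimes 1_X) \circ \gamma_X$ and symmetrically for $\overline{\omega}$; one checks the zig-zag equations \eqref{equ:conjeq} by the usual bending-of-wires calculation, reducing them to the zig-zag equations for $(\gamma_X,\overline{\gamma}_X)$. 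This shows $d_{\overline{X} \otimes X} \leq \|\omega\|^2 = d_X^2$ (the norm computation again reduces to $\|\gamma_X\|^2 = d_X$, using that tensoring with a $1_Y$ leg is isometric on positive elements of the form $\omega^*\omega$, as recalled after Definition \ref{def:stdsolconn}). For the reverse inequality one uses that $X$ is a retract of $X \otimes \overline{X} \otimes X$ via the very solution $(\gamma_X,\overline{\gamma}_X)$, so $X \otimes \overline{X} \otimes X = X \oplus (\text{rest})$ and hence $d_{X \otimes \overline{X} \otimes X} \geq d_X$; but also, by the submultiplicativity in the second part of the lemma (or directly), $d_{X \otimes \overline{X} \otimes X} = d_{\overline{X}\otimes X} \cdot d_X / (\text{something})$ — more cleanly, I would argue that the standard solution for $\overline{X}\otimes X$ is, up to unitary, exactly the $\omega$ above because it realizes the infimum, invoking uniqueness of standard solutions (Lemma 8.35 / the remark citing Lemma 8.35 in \cite{lr}); then $d_{\overline{X}\otimes X} = \|\omega\|^2 = d_X^2$ on the nose.

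For the general submultiplicativity $d_{X \otimes Y} \leq d_X d_Y$ with $X \in \bB(\CM,\CN)$, $Y \in \bB(\CL,\CM)$ both connected, the first step is to observe that $\overline{X \otimes Y}$ can be taken to be $\overline{Y} \otimes \overline{X}$, and that the tensor product solution $\omega \dfeq (1_{\overline Y} \otimes \gamma_X \otimes 1_Y) \circ \gamma_Y$, $\overline{\omega} \dfeq (1_X \otimes \overline{\gamma}_Y \otimes 1_{\overline X}) \circ \overline{\gamma}_X$ built from standard solutions of $X$ and of $Y$ solves the conjugate equations \eqref{equ:conjeq} for $X \otimes Y$ and $\overline{Y}\otimes\overline{X}$; this is again the routine wire-bending verification. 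Consequently, by the definition of standardness as an infimum (Definition \ref{def:stdsolconn}) — noting $X \otimes Y$ is a finite direct sum of connected 1-morphisms and the scalar dimension is the max of the connected components' dimensions — we get
\begin{align*}
    d_{X \otimes Y} \;\leq\; \|\omega\| \, \|\overline{\omega}\| \;\leq\; \|\gamma_X\|\,\|\gamma_Y\| \cdot \|\overline{\gamma}_X\| \, \|\overline{\gamma}_Y\| \;=\; d_X \, d_Y,
\end{align*}
where in the middle step I use submultiplicativity of the norm under vertical composition and the isometry of tensoring with identity legs (Lemma 1.13 in \cite{z}) to bound $\|\omega\| \leq \|\gamma_X\| \|\gamma_Y\|$ and $\|\overline\omega\| \leq \|\overline\gamma_X\| \|\overline\gamma_Y\|$, together with $\|\gamma_X\|^2 = \|\overline\gamma_X\|^2 = d_X$ and likewise for $Y$.

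The main obstacle I anticipate is bookkeeping rather than conceptual: in a genuinely bicategorical (as opposed to monoidal) setting one must be careful that all the horizontal composites above land in the correct hom-categories $\bB(\cdot,\cdot)$ and that the "tensor product of standard solutions" really is a legitimate solution — i.e., that the connectedness hypotheses on $X$ and $Y$ are exactly what is needed for the direct-sum decomposition into connected pieces to interact well with $\otimes$, via the orthogonality relations $X_i \otimes \overline{X}_j = \zero$ for $i \neq j$ recalled before Definition \ref{def:stdsol}. The subtle point in the first part is to be sure the explicit $\omega$ for $\overline{X} \otimes X$ is \emph{standard} and not merely a solution; here the cleanest route is to verify directly that its "conditional expectation" $\omega^* \omega$ matches the one prescribed by the matrix-dimension Perron–Frobenius data, or simply to invoke that a solution whose norm squared equals the infimum must be standard, and to compute that infimum is $\leq d_X^2$ from the construction and $\geq d_X^2$ from the fact that $\overline X \otimes X$ contains $\one_\CM$ with the "right" multiplicity dictated by $d_X$.
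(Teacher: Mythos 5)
The crux of your first identity is the lower bound $d_{\overline{X}\otimes X}\geq d_X^2$, and that is exactly where your argument has a genuine gap. Your construction (after fixing the slip: the middle insertion must be $\overline{\gamma}_X$, not $\gamma_X$, since $\gamma_X$ lands in $\bB(\CM,\CM)$ while the slot between $\overline{X}$ and $X$ requires a 1-morphism in $\bB(\CN,\CN)$ --- compare Proposition \ref{prop:CX_X_solution}) produces a solution with $\|\omega\|^2\leq d_X^2$, hence at best the upper bound $d_{\overline{X}\otimes X}\leq d_X^2$ (here $\overline{X}\otimes X$ is connected by Remark 8.26 and Lemma 8.27 in \cite{lr}, so Definition \ref{def:stdsolconn} does apply). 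To get equality you assert that this solution is standard ``because it realizes the infimum'', which is circular: that it realizes the infimum is precisely what has to be proved, and uniqueness of standard solutions cannot certify minimality of a given solution. Your fallback --- that $\overline{X}\otimes X$ contains $\one_\CM$ ``with the right multiplicity dictated by $d_X$'' --- does not work either: the multiplicity of $\one_\CM$ in $\overline{X}\otimes X$ is the integer $\dim\End(X)$ (equal to $1$ for simple $X$) and has no direct relation to $d_X^2$. Some genuine minimality input is unavoidable here; in the paper it is the multiplicativity of the \emph{matrix} dimension together with the fact that dualization becomes transposition (Proposition 8.47 and Remark 8.26 in \cite{lr}), after which, writing $D$ for the matrix dimension of $X$, the first equality is just the C$^*$-identity $\|D^{T}D\|=\|D\|^2$ for the $L^2$-norm, and the second is the submultiplicativity $\|D_X D_Y\|\leq\|D_X\|\,\|D_Y\|$. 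Note also that in the paper Proposition \ref{prop:CX_X_solution} (standardness of your candidate solution) is \emph{deduced from} Lemma \ref{lem:dsubmult} together with Proposition 8.30 in \cite{lr}; your proposal runs this implication backwards, so it cannot be closed without importing the same dimension-theoretic facts from \cite{lr}.

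Your route to the inequality $d_{X\otimes Y}\leq d_X d_Y$ via the product solution is a legitimate alternative to the paper's one-line norm estimate, but it also needs a supplement you only gesture at: Definition \ref{def:stdsolconn} gives $d\leq\|\omega\|\,\|\overline{\omega}\|$ only for \emph{connected} 1-morphisms, whereas $X\otimes Y$ is in general not connected, and $d_{X\otimes Y}=\max_k d_{(X\otimes Y)_k}$ by Definition \ref{def:stdsol}. You must therefore compress your solution to each connected component without increasing the norms (the components are cut by self-adjoint projections of the form $1_{X\otimes Y}\otimes e$, which are central in the relevant endomorphism algebra, so this can be done, but it should be spelled out). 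With that addition, your estimate $\|\omega\|\leq\|\overline{\gamma}_X\|\,\|\gamma_Y\|$ (and its companion), using isometry of tensoring with identity legs as in Lemma 1.13 of \cite{z}, does yield the inequality; so the second half is repairable, while the first half is not, short of invoking the matrix-dimension machinery of \cite{lr} as the paper does.
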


\begin{proof}
By Proposition 8.47 and Remark 8.26 in \cite{lr}, the matrix dimension function is multiplicative and it turns dualization to transposition. Thus the first equality follows from the C$^*$-property of the $L^2$-operator norm, or from the analogue of Proposition 6.6 in \cite{lr} whose assumptions are fulfilled in the case of $X$ and $\overline{X}$. 
The inequality in the second part of the statement follows from the submultiplicativity of the $L^2$-operator norm.
\end{proof}

\begin{prop}\label{prop:CX_X_solution}
For every $X \in \bB(\CM, \CN)$, $\overline{X} \otimes X$ is self-dual and 
\begin{align*}
  \gamma = \overline{\gamma} \dfeq (1_{\overline{X}} \otimes \overline{\gamma}_X \otimes 1_{X}) \gamma_X 
\end{align*}
is a standard solution of the conjugate equations \mref{equ:conjeq}, where $(\gamma_X,\overline{\gamma}_X)$ is a standard solution of the conjugate equations for $X$ and $\overline{X}$.
\end{prop}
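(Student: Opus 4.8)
The plan is to verify directly that the proposed 2-morphism $\gamma = \overline{\gamma}$ solves the conjugate equations \mref{equ:conjeq} for $Z \dfeq \overline{X} \otimes X$ against itself, and then to check standardness via the norm/scalar-dimension criterion of Definition \ref{def:stdsolconn} (combined with Definition \ref{def:stdsol} in the non-connected case). First I would reduce to the connected case: write $X = \oplus_i X_i$ into connected pieces as in Lemma 8.19 of \cite{lr}, so that $\overline{X} \otimes X = \oplus_i (\overline{X}_i \otimes X_i)$ with the cross terms $X_i \otimes \overline{X}_j$, $\overline{X}_j \otimes X_i$ vanishing for $i \neq j$; since $\gamma_X = \oplus_i \gamma_{X_i}$ and $\overline{\gamma}_X = \oplus_i \overline{\gamma}_{X_i}$, the proposed solution splits as a direct sum over $i$ of the corresponding expressions $(1_{\overline{X}_i} \otimes \overline{\gamma}_{X_i} \otimes 1_{X_i})\gamma_{X_i}$, and it suffices to treat each summand. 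So assume $X$ connected; note $\overline{X}_i \otimes X_i$ is then connected as well (its center embeds into $\bB(\CM,\CM)(\one_\CM,\one_\CM)$-stuff in a way controlled by connectedness of $X_i$, or one invokes that $\overline{X}\otimes X$ is connected for $X$ connected, which is part of Lemma 8.19/the structure theory in \cite{lr}).

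For the connected case, the first step is the algebraic identity. Self-duality is clear since $Z = \overline{X}\otimes X$ and $\overline{Z} = \overline{X} \otimes \overline{\overline{X}} \cong \overline{X}\otimes X = Z$ using $\overline{\overline{X}} \cong X$. To check the zig-zag equations for the pair $(Z, Z)$ with the single 2-morphism $\gamma$, I would draw the standard graphical calculus picture: $\gamma = (1_{\overline{X}}\otimes\overline{\gamma}_X\otimes 1_X)\gamma_X$ is the ``nested cup'' built from the cup $\gamma_X: \one_\CM \to \overline{X}\otimes X$ with an inner cup $\overline{\gamma}_X: \one_\CN \to X\otimes\overline{X}$ inserted in the middle. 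The composite $(1_Z \otimes \gamma)$ followed by $(\gamma^* \otimes 1_Z)$ then becomes, after sliding, a ``double zig-zag'': two applications of the conjugate equations for $(X,\overline{X})$ — one using $(\gamma_X, \overline{\gamma}_X)$ and one using $(\overline{\gamma}_X, \gamma_X)$ (which is a solution for $(\overline{X}, X)$) — collapse it to $1_Z$. This is the routine diagrammatic computation I would present with one picture; the only thing to be careful about is that $\overline{\gamma}_X^{\,*}$ appears where the conjugate equations expect it and that the two halves of the nested cup really do pair off correctly.

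The second, and genuinely substantive, step is standardness. By Definition \ref{def:stdsolconn} it suffices to show $\|\gamma\|^2 = d_Z = d_{\overline{X}\otimes X}$, because a solution realizing the scalar dimension as $\|\gamma\|^2$ is automatically standard (the standard solution is unique up to unitary, and its squared norm is the scalar dimension). By Lemma \ref{lem:dsubmult} we have $d_{\overline{X}\otimes X} = d_X^2$ in the connected case. So the task is the norm computation $\|\gamma\|^2 = d_X^2$. I would compute $\gamma^*\gamma = \gamma_X^*\,(1_{\overline{X}}\otimes \overline{\gamma}_X^{\,*}\overline{\gamma}_X \otimes 1_X)\,\gamma_X \in \bB(\CM,\CM)(\one_\CM,\one_\CM)$. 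Because $(\gamma_X,\overline{\gamma}_X)$ is \emph{standard}, the ``partial trace'' / conditional-expectation property of standard solutions (the functoriality and scalarity properties recalled before Section \ref{sec:stdsol}, cf.\ the standardness characterization and Lemma 8.35/Theorem 8.44 in \cite{lr}) gives that $1_{\overline{X}}\otimes(\overline{\gamma}_X^{\,*}\,\cdot\,\overline{\gamma}_X)\otimes 1_X$ applied and then wrapped by $\gamma_X^*(\cdot)\gamma_X$ computes $d_X$ times $\gamma_X^*\gamma_X$, and $\gamma_X^*\gamma_X$ itself has norm $\|\gamma_X\|^2 = d_X$ (again using that $X$ is connected so this is a scalar times $1_{\one_\CM}$, at least after the center is one-dimensional — in the multitensor setting one works component-wise over the simple summands of $\one_\CM$ and uses that the scalar dimension is the max). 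Hence $\|\gamma^*\gamma\| = d_X \cdot d_X = d_X^2 = d_Z$, as needed.

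The main obstacle I anticipate is precisely this last norm identity: making the ``$\overline{\gamma}_X^{\,*}(\,\cdot\,)\overline{\gamma}_X$ inserts a factor $d_X$'' step rigorous in the multitensor (finite-dimensional but non-trivial center) setting, rather than the classical tensor-category setting where everything is genuinely scalar. This is where one must invoke the precise properties of standard solutions from \cite{lr} — that the left and right partial traces built from a standard solution are faithful (not necessarily tracial) conditional expectations whose composition with the unit inclusion reproduces the correct dimension data — and, if necessary, decompose over the simple summands $\one_i$ of the horizontal unit and use $d_X = \max_i d_{X_i}$ together with the analogue of Proposition 6.6 in \cite{lr} already cited in the proof of Lemma \ref{lem:dsubmult}. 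Everything else (the reduction to connected pieces, the zig-zag verification, self-duality) is formal.
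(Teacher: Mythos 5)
Your overall strategy (reduce to connected components, verify the zig-zag equations, then get standardness from the norm criterion of Definition \ref{def:stdsolconn} together with $d_{\overline{X}_i\otimes X_i}=d_{X_i}^2$ from Lemma \ref{lem:dsubmult}) is sound and close in spirit to the paper's argument, which likewise reduces to connected pieces, cites Remark 8.26 and Lemma 8.27 in \cite{lr} for connectedness of $\overline{X}_i\otimes X_i$ (you should cite this rather than gesture at ``structure theory''), and then invokes Proposition 8.30 in \cite{lr}. The genuine gap is exactly the step you flag as your main obstacle: the equality $\|\gamma\|^2=d_X^2$ in the connected case. What you actually need is that for a \emph{connected} $X$ the standard solution satisfies $\overline{\gamma}_X^{\,*}\overline{\gamma}_X=d_X\, s$, where $s\in\bB(\CN,\CN)(\one_\CN,\one_\CN)$ is the support projection of $X$, so that $1_{\overline{X}}\otimes\overline{\gamma}_X^{\,*}\overline{\gamma}_X\otimes 1_X=d_X\,1_{\overline{X}\otimes X}$, hence $\gamma^*\gamma=d_X\,\gamma_X^{*}\gamma_X$ and $\|\gamma\|^2=d_X^2$. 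This is true, but it is a Perron--Frobenius statement about standard solutions (the corners $q_jXp_i$ enter with weights given by the PF eigenvector entries, which is what makes all the weighted row and column sums of the matrix dimension equal to $d_X$); it is precisely what Proposition 8.30 of \cite{lr} supplies, and it is the same fact the paper quotes in Lemma \ref{lem:indecom_f_alg} in the form $\overline{\gamma}^{\,*}\overline{\gamma}=d_X 1_{\one_\CN}$ under a full-support hypothesis. Without it, the naive estimate $\overline{\gamma}_X^{\,*}\overline{\gamma}_X\le d_X$ only gives $\|\gamma\|^2\le d_X^2$, and the reverse inequality is exactly the PF alignment you have not established.

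Your proposed fallback does not close this gap: decomposing over the simple summands $\one_i$ of the horizontal unit and using $d_X=\max_i d_{X_i}$ conflates two different decompositions. That maximum formula concerns the connected components of a non-connected 1-morphism; inside a single connected $X$ the pieces $q_jXp_i$ are not connected components, $\overline{\gamma}_X^{\,*}\overline{\gamma}_X$ is a priori a non-scalar central element, and the corner dimensions combine through the $L^2$-operator norm of the matrix dimension, not through a maximum. So either import Proposition 8.30 of \cite{lr}, as the paper does (its proof is essentially: cross terms vanish, $\overline{X}_i\otimes X_i$ is connected by Remark 8.26 and Lemma 8.27 of \cite{lr}, $d_{\overline{X}_i\otimes X_i}=d_{X_i}^2$ by Lemma \ref{lem:dsubmult}, and Prop.\ 8.30 gives standardness of $(1_{\overline{X}_i}\otimes\overline{\gamma}_{X_i}\otimes 1_{X_i})\gamma_{X_i}$), or prove the scalarity-on-the-support statement yourself. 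Two minor points: $\|\gamma_X^{*}\gamma_X\|=\|\gamma_X\|^2$ is just the C$^*$-identity and needs no connectedness; and your ``minimal norm implies standard'' criterion requires both $\|\gamma\|^2$ and $\|\overline{\gamma}\|^2$ to equal the infimum, which is automatic here since $\gamma=\overline{\gamma}$, and rests on the equivalence recorded in the remark following Definition \ref{def:stdsolconn}.
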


\begin{proof}
It is easy to check that $\overline{X} \otimes X \otimes \overline{X} \otimes X = \oplus_i (\overline{X}_i \otimes X_i \otimes \overline{X}_i \otimes X_i)$ and 
\begin{align*}
  \gamma = \oplus_i (1_{\overline{X}_i} \otimes \overline{\gamma}_{X_i} \otimes 1_{X_i})\gamma_{X_i}
\end{align*}
where $X_i$ and $\overline{X}_i$ are the connected components of $X$ and $\overline{X}$, respectively.
By Remark 8.26 and Lemma 8.27 in \cite{lr}, $\overline{X}_i \otimes X_i$ is connected.
Thus the scalar dimension of $\overline{X}_i \otimes X_i$ is the square of the scalar dimension of $X_i$ by the first statement of Lemma \ref{lem:dsubmult}. By Proposition 8.30 in \cite{lr}, $(1_{\overline{X}_i} \otimes \overline{\gamma}_{X_i} \otimes 1_{X_i})\gamma_{X_i}$ gives a standard solution of the conjugate equations for $\overline{X}_i \otimes X_i$, for every $i$, in the sense of Definition \ref{def:stdsolconn}. Thus, $(\gamma,\gamma)$ is a standard solution of the conjugate equations in the sense of Definition \ref{def:stdsol}.
\end{proof}

\begin{rem}
Note that the inequality $d_{X\otimes Y} \leq d_{X}d_{Y}$ in Lemma \ref{lem:dsubmult} is in general strict, see Corollary 6.5 in \cite{lr}. It is an equality when the sufficient conditions of Proposition 6.6 in \cite{lr} are fulfilled, as it is for example the case for $X$ and $Y=\overline{X}$. Moreover, $(1_{\overline{Y}} \otimes \gamma_X \otimes 1_{Y}) \gamma_Y$ and $(1_X \otimes \overline{\gamma}_X \otimes 1_{X}) \gamma_X$ is a solution of the conjugate equations for $X \otimes Y$ and $\overline{Y} \otimes \overline{X}$, but in general \textit{not} a standard one.
\end{rem}

\subsection{Special C$^*$-Frobenius algebras and bimodules}\label{sec:specialalgs}\,\\ \vspace{-2mm}

In this section, we recall the definition of algebra in a C$^*$-bicategory $\bB$, algebra homomorphism, bimodule and bimodule map, and generalize some results from \cite{bklr} to our setting.

An \textbf{algebra} in $\bB$, or more precisely in $\bB(\CM, \CM)$ for an object $\CM$, is a triple $(A, m, \iota)$ consisting of a 1-morphism $A \in \bB(\CM, \CM)$, and 2-morphisms $m \in \bB(\CM,\CM)(A \otimes A, A)$ (the multiplication of $A$) and $\iota \in \bB(\CM,\CM)(\one_\CM ,A)$ (the unit of $A$) such that 
\begin{align*}
  m(\iota \otimes 1_A) = 1_A = m(1_A \otimes \iota), \quad m(m \otimes 1_A) = m(1_A \otimes m). 
\end{align*}
Let $(A', m', \iota')$ be another algebra in $\bB(\CM, \CM)$. A 2-morphism $t \in \bB(\CM,\CM)(A,A')$ is called an \textbf{algebra homomorphism} from $A$ to $A'$ if $tm = m'(t \otimes t)$ and $t \iota = \iota'$.

Let $(A, m_A, \iota_A)$ and $(B, m_B, \iota_B)$ be two algebras in $\bB(\CM, \CM)$ and $\bB(\CN, \CN)$, respectively. A \textbf{$B$-$A$-bimodule} is a triple $(X, l_X, r_X)$, also denoted by $(X, l, r)$ or just $X$ if no confusion arises, consisting of a 1-morphism $X\in\bB(\CM, \CN)$, and 2-morphisms $l \in \bB(\CM, \CN)(B \otimes X, X)$ (the left action of $B$ on $X$) and $r \in \bB(\CM, \CN)(X \otimes A, X)$ (the right action of $A$ on $X$) such that
\begin{align*}
  &l(m_B \otimes 1_X) = l(1_B \otimes l), \quad r(1_X \otimes m_A) = r(r \otimes 1_A),\\    
  &l(\iota_B \otimes 1_X) = 1_X = r(1_X \otimes \iota_A), \quad r(l \otimes 1_A) = l(1_B \otimes r).
\end{align*}
In particular, an algebra $A$ bears the structure of an $A$-$A$-bimodule with the left and right actions given by the multiplication. 

\begin{defn}\label{def:specialX}
We say that a $B$-$A$-bimodule $X$ is \textbf{special} if 
\begin{align*}
ll^* = 1_X = rr^*.
\end{align*}
\end{defn}

Let $(X_i, l_i, r_i)$ be two $B$-$A$-bimodules, $i=1,2$. A 2-morphism $f \in \bB(\CM,\CN)(X_1, X_2)$ is called a \textbf{left $B$-module map} from $X_1$ to $X_2$ if $fl_1 = l_2(1_B \otimes f)$. Analogously, $g \in \bB(\CM,\CN)(X_1, X_2)$ is called a \textbf{right $A$-module map} from $X_1$ to $X_2$ if $gr_1 = r_2(g \otimes 1_A)$. Finally, a \textbf{$B$-$A$-bimodule map} is a left $B$-module map and a right $A$-module map.

\begin{notat}\label{not:BAisAtoB}
In order to be consistent with the notation for the relative tensor product of bimodules over von Neumann algebras in Section \ref{sec:connes_bim}, and with the one in Section \ref{sec:rigid_Cstar_bicats}, we think of a $B$-$A$-bimodule as a 1-morphism from the algebra $A$ to the algebra $B$. We denote by $\BMod_\bB(A,B)$ the category of $B$-$A$-bimodules and $B$-$A$-bimodule maps, over the algebras $A$, $B$ in $\bB$. 
\end{notat}

\begin{defn}
An algebra $(A, m, \iota)$ is called a \textbf{C$^*$-Frobenius algebra} if it fulfills the Frobenius property, namely if 
\begin{align*}
  (1_A \otimes m)(m^* \otimes 1_A) = m^*m =  (m \otimes 1_A)(1_A \otimes m^*)
\end{align*}
i.e., if $m^*$ is an $A$-$A$-bimodule map.

An algebra $(A, m, \iota)$ is called \textbf{special} if $A$ is a special $A$-$A$-bimodule, i.e., if 
\begin{align*}
mm^* = 1_A.
\end{align*}
\end{defn}  

By Lemma 3.7 in \cite{bklr}, a special algebra is automatically a C$^*$-Frobenius algebra. The Frobenius and unit properties imply that a C$^*$-Frobenius algebra $A$ is a self-dual 1-morphism in $\bB$ in the sense of Definition \ref{def:rigid}, with a solution of the conjugate equations \eqref{equ:conjeq} given by $(m^* \iota, m^* \iota)$. 

For the rest of this section, we collect some basic facts about C$^*$-Frobenius algebras in $\bB$ and their bimodules. The following proposition generalizes Lemma 3.5 and Corollary 3.6 in \cite{bklr}.

\begin{prop}\label{lem:lr_module_map_induce_iso}
Let $(A, m, \iota)$ be a C$^*$-Frobenius algebra in $\bB$. If $t$ is an invertible left (or right) $A$-module map from $A$ to $A$, then $(A, tm(t^{-1} \otimes t^{-1}), t \iota)$ is a C$^*$-Frobenius algebra isomorphic to $(A, m, \iota)$.
 
In particular, every C$^*$-Frobenius algebra $(A, m, \iota)$ in $\bB$ is isomorphic to a special one $(A, n m(n^{-1} \otimes n^{-1}), n \iota)$, where $n \dfeq (mm^*)^{1/2}$ is an invertible positive $A$-$A$-bimodule map. 
\end{prop}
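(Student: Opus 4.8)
The plan is to prove the two assertions in sequence, deriving the second from the first. The core observation is that the Frobenius property and the algebra axioms are preserved under ``transporting the structure'' along an invertible module map; this is the C$^*$-analogue of the fact that Frobenius algebra structures depend only on the underlying object up to suitable twisting.

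First I would verify that $(A, tm(t^{-1} \otimes t^{-1}), t\iota)$ is indeed an algebra. Associativity and the unit axioms for the new multiplication $m' \dfeq tm(t^{-1}\otimes t^{-1})$ and unit $\iota' \dfeq t\iota$ follow by a direct diagram chase from those for $(m,\iota)$: one cancels the $t^{-1}$'s against $t$'s using functoriality of $\otimes$ and the interchange law. That $t$ is then tautologically an algebra homomorphism $(A,m,\iota) \to (A,m',\iota')$, and invertible, is immediate from $tm = m'(t\otimes t)$ and $t\iota = \iota'$; hence the two algebras are isomorphic. The slightly more delicate point is the Frobenius property for $(A, m', \iota')$. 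Here I would use the hypothesis that $t$ is a \emph{left} (say) $A$-module map from $A$ to $A$, i.e. $tm = m(1_A \otimes t)$ — equivalently, using that in a C$^*$-category a module map has a module-map adjoint with respect to the module structure it respects, $m^* t^* = (1_A \otimes t^*)m^*$ — or I would argue directly that $t$ being a left module map forces $t$ to be right multiplication by the element $t\iota \in \bB(\CM,\CM)(\one_\CM, A)$, so that $m'$ and $m$ differ by an inner twist that the Frobenius identity is manifestly stable under. Concretely, from $m'^* = (t^{-1}\otimes t^{-1})^* m^* t^* = (t^{-*}\otimes t^{-*}) m^* t^*$ one substitutes into $(1_A \otimes m')(m'^* \otimes 1_A)$ and uses the left-module-map relation to slide the $t^{\pm 1}$ factors through $m$ and $m^*$, collapsing the expression to $t^{-*} (m^* m)$ something that, after the symmetric manipulation on the other side, reproduces $m'^* m'$; the Frobenius property of $m$ is what makes the two outer expressions agree.

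For the ``in particular'' clause, set $n \dfeq (mm^*)^{1/2}$. I would first note $mm^* \in \bB(\CM,\CM)(A,A)$ is positive (it is of the form $ss^*$) and invertible: invertibility is where the C$^*$-Frobenius hypothesis enters, via the fact from the excerpt that $(m^*\iota, m^*\iota)$ solves the conjugate equations, so $A$ is self-dual, hence $\bB(\CM,\CM)(A,A)$ is a finite-dimensional C$^*$-algebra in which $mm^*$ has no kernel (a zig-zag argument shows $m$ is split epi, so $mm^*$ is invertible). Then $n = (mm^*)^{1/2}$ is a well-defined invertible positive element by continuous functional calculus. The Frobenius property says $m^*$ is an $A$-$A$-bimodule map, from which $mm^*$, and therefore any continuous function of it such as $n$ and $n^{-1}$, is also an $A$-$A$-bimodule map (bimodule maps form a C$^*$-subalgebra of $\bB(\CM,\CM)(A,A)$ that is closed under functional calculus). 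So $n$ satisfies the hypotheses of the first part, and $(A, nm(n^{-1}\otimes n^{-1}), n\iota)$ is a C$^*$-Frobenius algebra isomorphic to $A$. It remains to check it is \emph{special}, i.e. that its multiplication $\tilde m \dfeq nm(n^{-1}\otimes n^{-1})$ satisfies $\tilde m \tilde m^* = 1_A$. Compute $\tilde m \tilde m^* = n m (n^{-1}\otimes n^{-1})(n^{-1}\otimes n^{-1})^* m^* n^* = n\, m (n^{-2}\otimes n^{-2}) m^*\, n$; now use that $n^{-2} = (mm^*)^{-1}$ is a bimodule map to pull one copy of $(n^{-2}\otimes 1_A)$ through as $(1_A \otimes n^{-2})$ is not quite what is wanted — instead pull both factors of $mm^*$ inside via the Frobenius/bimodule-map property so that $m(mm^*)^{-1}\otimes(mm^*)^{-1})m^*$ telescopes to $(mm^*)^{-1}$, giving $\tilde m\tilde m^* = n (mm^*)^{-1} n = n n^{-2} n = 1_A$.

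The main obstacle I anticipate is the Frobenius-property computation in the first part: keeping careful track of which actions the 2-morphism $t$ respects (left vs.\ right $A$-module) and correctly commuting the $t^{\pm 1}$ and $t^{\mp *}$ factors through $m$, $m^*$ using only the module-map relations and the interchange law. The cleanest route is probably to first establish the structural lemma that a \emph{left} $A$-module endomorphism of $A$ is precisely right multiplication $m(1_A \otimes a)$ by some $a = t\iota$, reducing all the twisting to an inner one; with that in hand the stability of the Frobenius and specialness identities becomes a short formal check rather than a long string diagram manipulation. Uniqueness of the positive square root and the closure of bimodule maps under functional calculus are standard C$^*$-algebra facts requiring only that $\bB(\CM,\CM)(A,A)$ be a genuine C$^*$-algebra, which holds since $\bB$ is a C$^*$-bicategory.
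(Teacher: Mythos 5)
Your overall route coincides with the paper's: transport the structure along $t$, then apply the first statement to $n=(mm^*)^{1/2}$. Your second half is essentially the paper's argument and is fine: invertibility of $mm^*$ comes from the unit section $m(\iota\otimes 1_A)=1_A$, which gives $mm^*\geq \|\iota\|^{-2}1_A$ (your detour through self-duality and the claim that $\bB(\CM,\CM)(A,A)$ is \emph{finite-dimensional} is unjustified at this point of the paper --- the proposition is stated before rigidity and finite-dimensional centers are assumed --- but it is also unnecessary, since the split-epi positivity bound already yields invertibility); $mm^*$ is an $A$-$A$-bimodule map by Frobenius plus associativity, hence so are $n^{\pm 1}$ by functional calculus (here you do not even need adjoint-closure of the bimodule maps: $mm^*$ is self-adjoint and its continuous functions are norm limits of polynomials in it); and your specialness computation $\tilde m\tilde m^{\,*}=n\,m(n^{-2}\otimes n^{-2})m^*\,n=n\,n^{-2}\,n=1_A$ is exactly the calculation of Corollary 3.6 in \cite{bklr} that the paper invokes.

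The genuine gap is in the first part, at the Frobenius check for $m'=tm(t^{-1}\otimes t^{-1})$. The fact needed to ``slide the $t^{\pm 1}$ factors through $m$ and $m^*$'' is that $t^*$ is \emph{again} a left $A$-module map, i.e.\ $t^*m=m(1_A\otimes t^*)$; this is the one computation the paper actually records, and it is a consequence of the Frobenius and unit properties of $(A,m,\iota)$, not a general C$^*$-categorical fact (for a non-Frobenius algebra the adjoint of a module map need not be a module map). What you display instead, $m^*t^*=(1_A\otimes t^*)m^*$, is merely the adjoint of the hypothesis $tm=m(1_A\otimes t)$ and carries no new information: it lets you move $t^*$ across $m^*$ but never across $m$, which is what the verification requires, so your sketched collapse to ``$t^{-*}(m^*m)\ldots$'' does not close with that relation alone. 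Your fallback --- $t$ is right multiplication by $a=t\iota$, so the twist is ``inner'' and Frobenius is ``manifestly stable'' --- hits the same wall: one must control $t^*=(1_A\otimes a^*)m^*$, and recognizing it as a right multiplication again uses the Frobenius property. So the missing step is precisely the lemma ``if $t$ is a left (right) $A$-module endomorphism of a C$^*$-Frobenius algebra, then so is $t^*$'', proved from the Frobenius and unit axioms; with it in hand, your sliding computation (and, if you wish, the adjoint-closure of the bimodule endomorphism algebra) goes through and the rest of your proposal matches the paper.
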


\begin{proof}
Although this fact is well known to experts, we sketch the proof for the sake of completeness. Assume that $t$ is a left $A$-module map, i.e., $ tm = m (1_A \otimes t)$, then the Frobenius and unit properties imply that
\begin{align*}
  t^* m = m (1_A \otimes t^*).
\end{align*}
Similarly if $t$ is a right $A$-module map. Now it is easy to check that $(A, tm(t^{-1} \otimes t^{-1}), t \iota)$ is a C$^*$-Frobenius algebra and that $t$ is an algebra isomorphism. Thus the first statement is proved. Since $1_A \geq \frac{1}{\|\iota\|^2} \iota\iota^*$ and $A$ is an algebra, we have
\begin{align*}
  mm^* \geq \frac{1}{\|\iota\|^2}m((\iota\iota^*) \otimes 1_A)m^* = \frac{1}{\|\iota\|^2} 1_A. 
\end{align*}
Then, arguing as in Lemma 3.5 in \cite{bklr}, the Frobenius and associativity properties imply that $mm^*$ is an invertible $A$-$A$-bimodule map. Specialness can be checked with the same calculation leading to Corollary 3.6 in \cite{bklr}. Thus the second statement is a consequence of the first.\end{proof}

Let $(A, m_A, \iota_A)$ and $(B, m_B, \iota_B)$ be two \textit{special} C$^*$-Frobenius algebras in $\bB$, and let $(X, l, r)$ be a $B$-$A$-bimodule. By an argument similar to the one leading to Proposition \ref{lem:lr_module_map_induce_iso} we have that $lrr^*l^*= rll^*r^*$ is invertible and positive, and 
\begin{align*}
  (X, h^{-1}l(1_B \otimes h), h^{-1}r(h \otimes 1_A))  
\end{align*}
is a special $B$-$A$-bimodule, where $h \dfeq (lrr^*l^*)^{1/2}$. Moreover, $h^{-1}$ is a $B$-$A$-bimodule isomorphism from $(X, l,r)$ to $(X, h^{-1}l(1_B \otimes h), h^{-1}r(h \otimes 1_A))$. Thus every bimodule over special C$^*$-Frobenius algebras is isomorphic to a special one, cf.\ Lemma 3.22 in \cite{bklr}. 

\begin{notat}
In the following, we use $\sBMod_\bB(A,B)$ to denote the full subcategory of $\BMod_\bB(A,B)$ consisting of all special $B$-$A$-bimodules and $B$-$A$-bimodule maps, for fixed algebras $A$, $B$ in $\bB$. 
\end{notat}

From the discussion above, if $A$ and $B$ are special C$^*$-Frobenius algebras, we know that $\BMod_\bB(A,B)$ is equivalent to $\sBMod_\bB(A,B)$ as a $\C$-linear category. If $(X,l,r)$ is a special $B$-$A$-bimodule, then by exactly the same line of arguments as in the proof of Lemma 3.23 in \cite{bklr} it is not hard to check that 
\begin{align}
  &(m_B \otimes 1_X)(1_B \otimes l^*) = l^* l = (1_B \otimes l)(m_B^* \otimes 1_X), \label{equ:l_module_f}\\
  &(1_X \otimes m_A)(r^* \otimes 1_A) = r^* r = (r \otimes 1_A) (1_X \otimes m_A^*). \label{equ:r_module_f}
\end{align}
By equations (\ref{equ:l_module_f}) and (\ref{equ:r_module_f}), we have that $f$ is a $B$-$A$-bimodule map between two special $B$-$A$-bimodules $X_1$ and $X_2$ if and only if $f^*$ is a $B$-$A$-bimodule map between $X_2$ and $X_1$. In particular, $\sBMod_\bB(A,B)$ is a C$^*$-category.

\subsection{The C$^*$-bicategory $\sBMod_\bB$}\label{sec:specialbimods}\,\\ \vspace{-2mm}

Let $\bB$ be a C$^*$-bicategory as above. From now on we assume in addition that it is rigid in the sense of Definition \ref{def:rigid}. 
Proceeding as in Section 4 and 5 in \cite{y}, it can be shown that \textit{special} C$^*$-Frobenius algebras in $\bB$ (as objects), their bimodules (as 1-morphisms) and bimodule maps (as 2-morphisms) constitute a \emph{rigid} bicategory, denoted by $\BMod_\bB$. 
In general, the $*$-structure on $\bB$ does \emph{not} define a $*$-structure on $\BMod_\bB$ since $\BMod_\bB(A,B)$ need not be closed under the $*$-map on bimodule maps $f \mapsto f^*$.

To continue our discussion, we need to describe the sub-bicategory $\sBMod_\bB$ whose 1-morphisms are \textit{special} bimodules, see Definition \ref{def:specialX}, over special C$^*$-Frobenius algebras in more details. We point out that most of the results of this section are well known to experts, at least when $\bB$ is a tensor C$^*$-category.

Let $(A, m_A, \iota_A) \in \bB(\CL, \CL)$, $(B, m_B, \iota_B) \in \bB(\CM, \CM)$ and $(C, m_C, \iota_C) \in \bB(\CN, \CN)$ be special C$^*$-Frobenius algebras. Let $(X, l_X, r_X) \in \sBMod_\bB(A,B)$ and $(Y, l_Y, r_Y) \in \sBMod_\bB(B,C)$. By equation (\ref{equ:l_module_f}) and (\ref{equ:r_module_f})
\begin{align}\label{equ:coequ_proj}
  p^B_{Y \otimes X} \dfeq (r_Y \otimes 1_X)(1_Y \otimes l_X^*) = (r_Y \otimes l_X)[1_Y \otimes (m_B^* \iota_B) \otimes 1_X] = (1_Y \otimes l_X)(r_Y^* \otimes 1_X) 
\end{align}
is a projection in $\sBMod_\bB(A,C)(Y \otimes X, Y\otimes X)$, cf.\ Lemma 3.36 in \cite{bklr}. It fulfills  
\begin{align}\label{equ:2-cells_fusion}
  (g \otimes f)(p^B_{Y_1 \otimes X_1}) = (p^B_{Y_2 \otimes X_2})(g \otimes f)
\end{align}
for every $f \in \sBMod_\bB(A,B)(X_1, X_2)$ and $g \in \sBMod_\bB(B,C)(Y_1, Y_2)$. 

\begin{notat}
In the following, we denote by $Y \rt{B} X$ the sub-1-morphism in $\bB(\CL,\CN)$ of $Y \otimes X$ associated with the projection $p^B_{Y \otimes X}$ and a chosen isometry $s\in\bB(\CL,\CN)(Y \rt{B} X, Y \otimes X)$ such that $s{s}^* = p^B_{Y \otimes X}$. With a routine calculation, one can check that 
    \begin{align*}
        \left (Y \rt{B} X, s^*(l_{Y} \otimes 1_{X})(1_C \otimes s), s^*(1_{Y} \otimes r_{X})(s \otimes 1_A) \right )  
    \end{align*}
is a special $C$-$A$-bimodule. We denote by $g \rt{B} f$ the 2-morphism in $\bB(\CL,\CN)(Y_1 \rt{B} X_1, Y_2 \rt{B} X_2)$ defined by compressing the two members of equation \mref{equ:2-cells_fusion} with the defining isometries $s_1\in\bB(\CL,\CN)(Y_1 \rt{B} X_1, Y_1 \otimes X_1)$ and $s_2\in\bB(\CL,\CN)(Y_2 \rt{B} X_2, Y_2 \otimes X_2)$ such that $s_1{s_1}^* = p^B_{Y_1 \otimes X_1}$ and $s_2{s_2}^* = p^B_{Y_2 \otimes X_2}$. It is clear that $(g_2 \rt{B} f_2)(g_1 \rt{B} f_1) = (g_2g_1) \rt{B} (f_2 f_1)$ and $(g \rt{B} f)^* = g^* \rt{B} f^*$. Moreover, $g \rt{B} f \in \sBMod_\bB(A,C)(Y_1 \rt{B} X_1, Y_2 \rt{B} X_2)$. 
\end{notat}

By equations \mref{equ:l_module_f}, \mref{equ:r_module_f} and by the commutation of left and right actions in a bimodule, we have
\begin{align*}
p^C_{C \otimes Y} = l_Y^*l_Y, \quad p^A_{X \otimes A} = r_X^* r_X, \quad (p^C_{Z \otimes Y} \otimes 1_X)(1_{Z} \otimes p^B_{Y \otimes X}) =  (1_{Z} \otimes p^B_{Y \otimes X})(p^C_{Z \otimes Y} \otimes 1_X). 
\end{align*}
Thus $C \rt{C} Y$, $X \rt{A} A$, with chosen isometries $l_Y^*$, $r_X^*$, coincide with $Y$, $X$, respectively, and $(Z\rt{C}Y) \rt{B} X$ is canonically unitarily isomorphic to $Z \rt{C} (Y \rt{B} X)$. Cf.\ \cite{y}.

Summing up the discussion, we have shown the following

\begin{prop}\label{prop:special_Fro_alg_bicat}
Special C$^*$-Frobenius algebras in $\bB$, their special bimodules and bimodule maps constitute a C$^*$-bicategory denoted by $\sBMod_\bB$. The horizontal unit 1-morphism in $\sBMod_\bB(A,A)$ is $A$, with left and right actions induced by the multiplication. The horizontal composition $\sBMod_\bB(B,C)\times \sBMod_\bB(A,B) \to \sBMod_\bB(A,C)$ is given by the $*$-bifunctor $- \rt{B} -$.
\end{prop}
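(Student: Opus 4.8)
The plan is to verify, one axiom at a time, that the data described in Proposition \ref{prop:special_Fro_alg_bicat} really assemble into a C$^*$-bicategory, using throughout the preparatory facts already established: that $\sBMod_\bB(A,B)$ is a C$^*$-category (closed under $f\mapsto f^*$ by \eqref{equ:l_module_f}--\eqref{equ:r_module_f}); that $p^B_{Y\otimes X}$ is a projection in $\sBMod_\bB$ intertwining $2$-morphisms as in \eqref{equ:2-cells_fusion}; that $-\rt{B}-$ is well defined on objects and $2$-morphisms, is compatible with $*$ and with vertical composition, and lands in special bimodules; and that $C\rt{C}Y$, $X\rt{A}A$ recover $Y$, $X$, while $(Z\rt{C}Y)\rt{B}X\cong Z\rt{C}(Y\rt{B}X)$ canonically and unitarily.

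First I would check bifunctoriality of the relative composition: for composable $2$-morphisms the identity $(g_2\rt{B}f_2)(g_1\rt{B}f_1)=(g_2g_1)\rt{B}(f_2f_1)$ and $(1_{Y}\rt{B}1_{X})=1_{Y\rt{B}X}$ are immediate from compressing \eqref{equ:2-cells_fusion} by the defining isometries, so each $-\rt{B}-$ is a $*$-functor $\sBMod_\bB(B,C)\times\sBMod_\bB(A,B)\to\sBMod_\bB(A,C)$; bilinearity on $2$-morphisms is inherited from that of $\otimes$. Next I would nail down the associator: define $\alpha=\alpha_{Z,Y,X}\colon(Z\rt{C}Y)\rt{B}X\to Z\rt{C}(Y\rt{B}X)$ as the unitary coming from comparing the two composite isometries into $Z\otimes Y\otimes X$ (using the commutation relation for $p^C_{Z\otimes Y}\otimes 1_X$ and $1_Z\otimes p^B_{Y\otimes X}$), and verify that it is a bimodule map and is natural in $X,Y,Z$ — naturality follows by conjugating \eqref{equ:2-cells_fusion} on both sides. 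The left and right unitors $\rho_Y\colon C\rt{C}Y\to Y$ and $\lambda_X\colon X\rt{A}A\to X$ are the canonical unitaries identified above (via $l_Y^*$, $r_X^*$); one checks they are bimodule maps and natural. Then the pentagon and triangle coherence axioms hold because, after composing with the defining isometries, both sides of each are the (strictly associative, unital) iterated $\otimes$ in $\bB$ restricted to the relevant sub-$1$-morphisms, i.e. they reduce to identities in $\bB$; equivalently one invokes coherence for $\bB$.

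It remains to confirm the C$^*$-bicategory axioms in the sense of the paper's definition: each $\sBMod_\bB(A,B)$ is a C$^*$-category closed under finite direct sums of $1$-morphisms and sub-$1$-morphisms — direct sums of special bimodules are special (the actions on $X_1\oplus X_2$ are block-diagonal, and $ll^*=1$ is checked blockwise), and a sub-$1$-morphism of a special bimodule cut by a projection $q$ in $\sBMod_\bB$ is again special by the now-familiar polar-decomposition argument (the compressed actions $q\,l\,(1_B\otimes q)$, etc., satisfy the specialness identity because $q$ commutes with the relevant $*$-images of the actions, exactly as in the $-\rt{B}-$ construction); horizontal composition of $2$-morphisms is bilinear and $*$-preserving as noted; the associators and unitors are unitary by construction. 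The horizontal unit in $\sBMod_\bB(A,A)$ is $A$ with actions $m_A$, and the defining relations $C\rt{C}Y\cong Y$, $X\rt{A}A\cong X$ say precisely that $A$ acts as a unit up to the unitors $\rho,\lambda$.

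The main obstacle I anticipate is not any single identity but organizing the bookkeeping so that associativity and the coherence axioms are genuinely reduced to strictness of $\otimes$ in $\bB$ rather than re-proved by hand: one must be careful that the chosen isometries $s$ defining $Y\rt{B}X$ (which are not canonical) cancel correctly in every composite, so that the natural transformations $\alpha,\rho,\lambda$ are independent of these choices up to the unique coherence isomorphisms. Once the identities $p^C_{Z\otimes Y}=l_Y^*l_Y$, $p^A_{X\otimes A}=r_X^*r_X$ and the commutation of $p^C_{Z\otimes Y}\otimes 1_X$ with $1_Z\otimes p^B_{Y\otimes X}$ are in hand — all recorded above — this becomes routine, and the cleanest presentation is to observe that $\sBMod_\bB$ is the idempotent-completion-type construction on $\BMod_\bB$ (already known to be a rigid bicategory by the Kodak-style argument referenced from \cite{y}) intersected with the $*$-closed, special sub-$1$-morphisms, so that the bicategorical structure is inherited and only the C$^*$-axioms (closure under $*$, unitarity of coherence data, C$^*$-completeness of hom-categories) need separate verification.
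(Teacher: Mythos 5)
Your proposal is correct and follows essentially the same route as the paper, whose proof of Proposition \ref{prop:special_Fro_alg_bicat} is precisely the preceding discussion: the projection $p^B_{Y\otimes X}$ of \eqref{equ:coequ_proj}, the compression by chosen isometries defining $-\rt{B}-$, the $*$-functoriality identities, the unit identifications via $l_Y^*$, $r_X^*$, and associativity from the commutation of $p^C_{Z\otimes Y}\otimes 1_X$ with $1_Z\otimes p^B_{Y\otimes X}$ (cf.\ \cite{y}). You in fact spell out a few points the paper leaves implicit (coherence of $\alpha,\rho,\lambda$, closure of $\sBMod_\bB(A,B)$ under direct sums and sub-1-morphisms, and the idempotent-completion viewpoint, which the paper only mentions later in connection with \cite{cdr}), but the argument is the same.
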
  

\begin{rem}\label{rem:BinsBiModB}
The C$^*$-bicategory $\bB$ can be viewed as a full sub-C$^*$-bicategory of $\sBMod_\bB$. 
Indeed, $\one_\CM$ is a special C$^*$-Frobenius algebra in $\bB(\CM,\CM)$ for every object $\CM$, and every $X \in \bB(\CM, \CN)$ is a special $\one_\CN$-$\one_\CM$-bimodule. It is clear that $p^{\one_\CM}_{Y \otimes X} = 1_{Y \otimes X}$. Thus $Y \rt{\one_\CM} X = Y \otimes X$ and $\sBMod_\bB(\one_\CM,\one_\CN) =\bB(\CM, \CN)$.
\end{rem}

Recall that $\sBMod_\bB(A,B)$ is equivalent to $\BMod_\bB(A,B)$ for $A$, $B$ special. Thus $\sBMod_\bB$ is biequivalent to $\BMod_\bB$. Since $\BMod_\bB$ is rigid, $\sBMod_\bB$ is also rigid. Moreover, $\sBMod_\bB$ has finite-dimensional centers whenever $\bB$ has, since all the 2-morphisms spaces $\bB(\CM,\CN)(X,Y)$ are finite-dimensional as observed in Section \ref{sec:rigid_Cstar_bicats}.

\subsection{Multitensor C$^*$-categories as bimodule categories}\label{sec:multisbimcat}\,\\ \vspace{-2mm}

The goal of this section is to show that every indecomposable multitensor C$^*$-category, see Section \ref{sec:rigid_Cstar_bicats}, is equivalent to the special bimodule category over a fixed \textit{standard} C$^*$-Frobenius algebra in a tensor C$^*$-category. In other words, multitensor C$^*$-categories are finite-dimensional amplifications of tensor C$^*$-categories.

We first introduce the notion of standard C$^*$-Frobenius algebra. Let $\bB$ be a rigid C$^*$-bicategory and assume in addition that it has finite-dimensional centers in the sense of Definition \ref{def:fin_dim_centers}.

\begin{defn}\label{def:standardA}
A C$^*$-Frobenius algebra $(A,m,\iota)$
is called \textbf{standard} if it is special and if $(m^* \iota, m^* \iota)$ is a standard solution of the conjugate equations \eqref{equ:conjeq} for $A$, in the sense of Definition \ref{def:stdsol}.
\end{defn}

\begin{prop}
Let $(A, m, \iota)$ be a special C$^*$-Frobenius algebras in $\bB(\CM, \CM)$, and denote by $X$ the special $A$-$\one_\CM$-bimodule $(A, m, 1_A)$, and by $\overline{X}$ the special $\one_\CM$-$A$-bimodule $(A, 1_A, m)$. Then there is an invertible $\one_\CM$-$A$-bimodule map $t$ from $\overline{X}$ to $\overline{X}$ such that $((t \otimes (t^{-1})^*)m^* \iota, (t \otimes (t^{-1})^*)m^* \iota)$ is a standard solution of the conjugate equations for $A$.

In particular, every special C$^*$-Frobenius algebra in $\bB$ is isomorphic to a standard one. 
\end{prop}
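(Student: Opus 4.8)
The plan is to modify a given special C$^*$-Frobenius algebra $(A,m,\iota)$ by an invertible bimodule automorphism of the underlying 1-morphism $A$, so as to turn the canonical self-duality solution $(m^*\iota, m^*\iota)$ into a standard one, while keeping the algebra special. The key observation is that $A$, viewed as a 1-morphism in $\bB(\CM,\CM)$, has an a priori known standard solution $(\gamma_A,\overline{\gamma}_A)$ in the sense of Definition \ref{def:stdsol}, whereas $(m^*\iota, m^*\iota)$ is merely \emph{some} solution of the conjugate equations \eqref{equ:conjeq} for $A$ and $\overline{A}=A$. Since $A$ is self-dual via $(m^*\iota,m^*\iota)$, standardness can be restored by a change of basis on the dual object. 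I would phrase this change of basis in terms of the bimodule $X=(A,m,1_A)$ and its dual $\overline{X}=(A,1_A,m)$, as in the statement, so that the interpolating 2-morphism is automatically a $\one_\CM$-$A$-bimodule map.

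First I would recall the general principle (e.g.\ from \cite{lr}, the uniqueness up to unitary of standard solutions, Lemma 8.35, together with the classification of arbitrary solutions): any two solutions of the conjugate equations for a fixed pair $X,\overline{X}$ differ by the action of an invertible 2-morphism $t\in\bB(\CM,\CM)(\overline{X},\overline{X})$, in the sense that the general solution is $(\,(t\otimes (t^{-1})^*)\,\gamma_A\,,\,(t\otimes(t^{-1})^*)\,\overline{\gamma}_A\,)$ up to the appropriate adjustments, and that standardness is exactly the condition that can always be achieved for a suitable choice of such $t$ (here we may take $t$ to be $(\text{something})^{1/2}$ of the positive invertible 2-morphism comparing the two pairings, just as $n=(mm^*)^{1/2}$ was used in Proposition \ref{lem:lr_module_map_induce_iso}). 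I would then check that the relevant $t$, built out of $\gamma_A$ and $m^*\iota$, is in fact a $\one_\CM$-$A$-bimodule map from $\overline{X}$ to $\overline{X}$: the right $A$-action on $\overline{X}=(A,1_A,m)$ is the multiplication $m$, and the intertwining property of $t$ with $m$ follows from the Frobenius and associativity relations for $A$, by the same computation that in Proposition \ref{lem:lr_module_map_induce_iso} shows $mm^*$ is an $A$-$A$-bimodule map and that left/right module maps are automatically two-sided. This yields the first assertion: $((t\otimes(t^{-1})^*)m^*\iota,(t\otimes(t^{-1})^*)m^*\iota)$ is a standard solution for $A$.

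For the ``in particular'' part, I would transport the algebra structure along $t$. Applying Proposition \ref{lem:lr_module_map_induce_iso} with the invertible (right, hence two-sided) $A$-module map $t$ — or rather its appropriate avatar acting on $A$ as a bimodule over itself — produces an isomorphic special C$^*$-Frobenius algebra $(A, t\,m\,(t^{-1}\otimes t^{-1}), t\iota)$ whose canonical solution $(\tilde m^*\tilde\iota,\tilde m^*\tilde\iota)$ is, by construction and by naturality of the $(\,\cdot\,)^*$ operation, precisely $((t\otimes(t^{-1})^*)m^*\iota,\,(t\otimes(t^{-1})^*)m^*\iota)$, which we have just shown to be standard. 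Hence the new algebra is standard and isomorphic to the original one.

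The main obstacle I anticipate is purely bookkeeping: matching the three different incarnations of the interpolating 2-morphism $t$ — as an element of $\bB(\CM,\CM)(A,A)$, as a $\one_\CM$-$A$-bimodule endomorphism of $\overline{X}$, and as a right $A$-module endomorphism of $A$ — and verifying that conjugating $m$ and $\iota$ by $t$ really does send $m^*\iota$ to $(t\otimes(t^{-1})^*)m^*\iota$ (this is where the adjoints $t^*$, $(t^{-1})^*$ must be tracked carefully, using that for a left/right $A$-module map on a C$^*$-Frobenius algebra one automatically has $t^*m = m(1_A\otimes t^*)$, as already recorded in the proof of Proposition \ref{lem:lr_module_map_induce_iso}). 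No genuinely new technique beyond Section \ref{sec:stdsol} and Proposition \ref{lem:lr_module_map_induce_iso} is needed; the content is that ``standard'' is one further normalization, of the duality datum, on top of ``special''.
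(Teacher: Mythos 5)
Your overall slogan (``standard = special plus a normalization of the duality datum, achieved by an invertible interpolator'') is right, but the two steps you treat as routine are exactly the ones that need a real argument, and as written they fail.

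First, the bimodularity of $t$ and the balanced shape of the solution. If you apply the comparison of solutions (Lemma 8.12 in \cite{lr}) directly in $\bB$ to the pair $A$, $\overline{A}=A$ with the solution $(m^*\iota,m^*\iota)$, you only get an invertible $t\in\bB(\CM,\CM)(A,A)$ such that $((t\otimes 1_A)m^*\iota,\,(1_A\otimes(t^{-1})^*)m^*\iota)$ is standard: this is neither symmetric, nor of the form $((t\otimes(t^{-1})^*)m^*\iota,(t\otimes(t^{-1})^*)m^*\iota)$, and there is no reason whatsoever for $t$ to satisfy $tm=m(t\otimes 1_A)$. Your claim that the intertwining of $t$ with $m$ ``follows from the Frobenius and associativity relations, by the same computation as for $mm^*$'' does not hold: $mm^*$ is built from the algebra structure itself, whereas your $t$ is built by comparing $m^*\iota$ with a standard solution $\gamma_A$, and $\gamma_A$ is a datum of the 1-morphism $A$ with no a priori compatibility with $m$. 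The paper gets both properties for free by a genuinely different move: since $m^*$ is an isometry with range $p^A_{\overline{X}\otimes X}=m^*m$, one has $\overline{X}\rt{A}X=A$ and $X\rt{\one_\CM}\overline{X}=A\otimes A$, so $(\iota,m^*)$ is a solution of the conjugate equations for the pair $(X,\overline{X})$ \emph{in the rigid C$^*$-bicategory $\sBMod_\bB$}; Lemma 8.12 applied there yields an interpolator $t$ which is by construction a morphism of $\sBMod_\bB(\one_\CM,A)$, i.e.\ a $\one_\CM$-$A$-bimodule map, and then Proposition \ref{prop:CX_X_solution} (tensoring standard solutions) transports standardness from the pair $(X,\overline{X})$ to the composite $A=\overline{X}\rt{A}X$, producing precisely the balanced solution $(t\otimes(t^{-1})^*)m^*\iota$ via $m^*t=(t\otimes 1_A)m^*$. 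Your proposal never identifies this detour through $\sBMod_\bB$, never checks that $(\iota,m^*)$ solves the conjugate equations for $(X,\overline{X})$, and never invokes Proposition \ref{prop:CX_X_solution}; without these, both the bimodularity of $t$ and the standardness of the claimed solution for $A$ are unsubstantiated.

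Second, the ``in particular'' part. The algebra $(A,\,tm(t^{-1}\otimes t^{-1}),\,t\iota)=(A,\,m(1_A\otimes t^{-1}),\,t\iota)$ obtained from Proposition \ref{lem:lr_module_map_induce_iso} is \emph{not} special in general: its multiplication $\tilde m$ satisfies $\tilde m\tilde m^*=m(1_A\otimes(t^*t)^{-1})m^*$, which is $1_A$ only if $t$ is essentially unitary, and $t$ cannot be unitary unless $(m^*\iota,m^*\iota)$ was already standard (the norms differ). Also, ``right, hence two-sided $A$-module map'' is false; $t$ is only right modular. Your identification of the new canonical solution with $(t\otimes(t^{-1})^*)m^*\iota$ is correct (using $m^*s=(s\otimes 1_A)m^*$ for the right modular $s=t^*t$), but you must then apply Proposition \ref{lem:lr_module_map_induce_iso} a second time with $n=(m(1_A\otimes(t^*t)^{-1})m^*)^{1/2}$ to restore specialness, and check that standardness survives this second conjugation because $n$ is an $A$-$A$-bimodule map. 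As it stands, your argument stops one step short of producing a standard (special) algebra.
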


\begin{proof}
Since $m^*$ is an isometry with range projection $p^A_{\overline{X} \otimes X} = m^*m$, we have that $\overline{X} \rt{A} X = A$. On the other hand, $X \rt{\one_{\CM}} \overline{X} = A \otimes A$. It is clear that $\iota \in \bB(\CM,\CM)(\one_\CM, A)$ is a $\one_\CM$-$\one_\CM$-bimodule map, and that $m^*\in\bB(\CM,\CM)(A, A \otimes A)$ is an $A$-$A$-bimodule map. Moreover, $\one_{\one_\CM} = \one_\CM$ and $\one_{A} = A$ are the tensor unit bimodules. Thus $(\iota, m^*)$ is a solution of the conjugate equations for $X$ and $\overline{X}$ in $\sBMod_\bB$. Recall that $\sBMod_\bB$ is a rigid C$^*$-bicategory with finite-dimensional centers. By Lemma 8.12 in \cite{lr}, there is an invertible $\one_\CM$-$A$-bimodule map $t$ from $\overline{X}$ to $\overline{X}$ such that $((t \rt{A} 1_{X})\iota, (1_X \rt{\one_\CM} (t^{-1})^*)m^*)$ is a standard solution of the conjugate equations for $X$ and $\overline{X}$ in $\sBMod_\bB$. Note that $(t \rt{A} 1_X)\iota = t\iota$ and recall that $m^*t = (t \otimes 1_A)m^*$. By Proposition \ref{prop:CX_X_solution}, $((t \otimes (t^{-1})^*)m^* \iota, (t \otimes (t^{-1})^*)m^* \iota)$ is a standard solution of the conjugate equations for the self-dual bimodule $\overline{X} \rt{A} X = A$. By Proposition \ref{lem:lr_module_map_induce_iso}, $(A, m, \iota)$ is isomorphic via the right $A$-module map $t$ to the C$^*$-Frobenius algebra $(A, tm(t^{-1} \otimes t^{-1}), t\iota) = (A, m(1_A \otimes t^{-1}), t\iota)$. Invoking again Proposition \ref{lem:lr_module_map_induce_iso}, we know that $(A, m(1_A \otimes t^{-1}), t\iota)$ is isomorphic to a special C$^*$-Frobenius algebra via the $A$-$A$-bimodule map $n = (m(1_A \otimes (t^*t)^{-1})m^*)^{1/2}$. The latter C$^*$-Frobenius algebra is also standard by our choice of $t$ and because $n$ is $A$-$A$-bimodular.
\end{proof}

In the remainder of this section we shall mainly be concerned with multitensor C$^*$-categories, e.g., $\bB(\CM,\CM)$ where $\bB$ is as above, but we state the following two lemmas in more generality for later use. 

\begin{lem}\label{lem:indecom_f_alg}
Assume that $\bB(\CM, \CM)$ and $\bB(\CN, \CN)$ are indecomposable and $\bB(\CM, \CN) \neq 0$. Then there is a non-zero $X \in \bB(\CM, \CN)$ and a solution $(\gamma, \overline{\gamma})$ of the conjugate equations \mref{equ:conjeq} for $X$ and $\overline{X}$, such that $\overline{\gamma}^{\, *}\overline{\gamma} = 1_{\one_\CN}$. 
\end{lem}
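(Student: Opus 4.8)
The plan is to start from \emph{any} non-zero $1$-morphism in $\bB(\CM,\CN)$ — which exists by hypothesis — and build from it a self-dual object on which we can arrange the right-hand leg of a solution to be (an amplification of) an isometry. More precisely, pick $0 \neq Z \in \bB(\CM,\CN)$ with dual $\overline{Z} \in \bB(\CN,\CM)$ and set $X \dfeq Z \otimes \overline{Z} \in \bB(\CN,\CN)$. By Proposition~\ref{prop:CX_X_solution}, $X$ is self-dual with a standard solution $(\gamma,\gamma)$, where $\gamma = (1_{\overline{Z}} \otimes \overline{\gamma}_Z \otimes 1_Z)\gamma_Z$ using a standard solution $(\gamma_Z,\overline{\gamma}_Z)$ for $Z$ and $\overline{Z}$. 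Wait — I want $X \in \bB(\CM,\CN)$, so I should instead take $X \dfeq Z$ itself, or better, work with the algebra structure: the point is to produce $X$ and a (not necessarily standard) solution whose right leg $\overline\gamma$ satisfies $\overline\gamma^{\,*}\overline\gamma = 1_{\one_\CN}$.

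The key mechanism is the passage to special C$^*$-Frobenius algebras. Consider the C$^*$-Frobenius algebra $\overline{Z} \otimes Z \in \bB(\CM,\CM)$ with multiplication built from the standard solution (its solution of the conjugate equations is $(m^*\iota, m^*\iota)$ as recalled after the definition of C$^*$-Frobenius algebra), and by Proposition~\ref{lem:lr_module_map_induce_iso} replace it by an isomorphic \emph{special} one $A$, so that $mm^* = 1_A$. Because $\bB(\CM,\CM)$ is indecomposable, $A$ is connected to $\one_\CM$ through the decomposition $\CC = \oplus_{i,j}\CC_{ij}$; in particular the unit component $\iota^*\iota$ and the relevant projections behave well. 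Now view $A$ as a $\one_\CN$–$\one_\CM$-bimodule via $Z$: set $X \dfeq Z$, regarded as a $\one_\CN$–$\one_\CM$-bimodule (this is legitimate by Remark~\ref{rem:BinsBiModB}), and take the solution $(\gamma,\overline\gamma)$ for $X$, $\overline{X} = \overline{Z}$ obtained by \emph{rescaling} the standard one: replace $(\gamma_Z,\overline\gamma_Z)$ by $(\lambda \gamma_Z, \lambda^{-1}\overline\gamma_Z)$ — this is still a solution of the conjugate equations. The freedom in $\lambda$, together with the invertible bimodule maps supplied by Proposition~\ref{lem:lr_module_map_induce_iso} applied inside $\bB(\CN,\CN)$, lets us normalize $\overline\gamma^{\,*}\overline\gamma$.

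Concretely, the last step is: $\overline\gamma^{\,*}\overline\gamma \in \bB(\CN,\CN)(\one_\CN,\one_\CN)$ is a positive element of a \emph{finite-dimensional abelian} C$^*$-algebra, hence a positive diagonal matrix $\mathrm{diag}(c_1,\dots,c_r)$ with $c_k > 0$ (strict positivity because $\overline\gamma$ is part of a solution, so no block can vanish on the support of $\one_\CN$). Using that $\bB(\CN,\CN)$ is indecomposable, I claim one can choose the dual $X$ and the solution so that all $c_k$ are equal to a common value $c$, and then rescaling $\overline\gamma \mapsto c^{-1/2}\overline\gamma$ (and $\gamma \mapsto c^{1/2}\gamma$ correspondingly, which preserves the conjugate equations) yields $\overline\gamma^{\,*}\overline\gamma = 1_{\one_\CN}$. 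The mechanism forcing the $c_k$ equal is exactly indecomposability of $\bB(\CN,\CN)$: if they were unequal one could split off the spectral subprojection of $\overline\gamma^{\,*}\overline\gamma$ corresponding to the largest eigenvalue and, chasing through the $\CC_{ij}$ decomposition and the functoriality of standard solutions, decompose $\bB(\CN,\CN)$ as a direct sum of multitensor C$^*$-categories, contradicting indecomposability. Alternatively, one can invoke the theory of matrix dimension from \cite{lr}: the relevant quantity is a Perron–Frobenius eigenvector, which for an indecomposable (hence ``irreducible'' in the matrix sense) structure has all strictly positive, and after the standard normalization, equal entries when restricted appropriately.

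I expect the main obstacle to be precisely this last normalization — showing that indecomposability of $\bB(\CN,\CN)$ forces $\overline\gamma^{\,*}\overline\gamma$ to be a \emph{scalar} multiple of $1_{\one_\CN}$ rather than merely an invertible positive element, and identifying the correct choice of $X$ (likely $X = Z \otimes \overline{Z} \otimes \cdots$ or the bimodule underlying a suitably chosen standard C$^*$-Frobenius algebra) that makes this work. The algebraic identities (that rescaled solutions remain solutions, that $\iota^*\iota$ is invertible, that the constructed $X$ is non-zero) are routine given the results already established in Sections~\ref{sec:stdsol}--\ref{sec:specialbimods}; the conceptual content is entirely in the Perron–Frobenius/indecomposability argument, for which the finite-dimensionality of the centers — hence of $\bB(\CN,\CN)(\one_\CN,\one_\CN)$ — is the essential hypothesis.
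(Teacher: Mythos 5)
There is a genuine gap: you never actually construct the $1$-morphism $X$ for which the desired normalization is achievable, and the mechanism you propose for forcing $\overline{\gamma}^{\,*}\overline{\gamma}$ to be a scalar multiple of $1_{\one_\CN}$ does not work. Write $\one_\CN=\oplus_j f_j$ with $f_j$ simple. If you take $X=Z$ an arbitrary non-zero $1$-morphism in $\bB(\CM,\CN)$, then for every $f_j$ with $f_j\otimes X=0$ the corresponding component of $\overline{\gamma}$, hence of $\overline{\gamma}^{\,*}\overline{\gamma}$, vanishes for \emph{every} solution of the conjugate equations; so your claim that all the eigenvalues $c_k$ are strictly positive is false in general, and no rescaling by a single scalar $\lambda$ (nor a twist by invertible bimodule maps) can produce $\overline{\gamma}^{\,*}\overline{\gamma}=1_{\one_\CN}$. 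Moreover, a non-scalar (or non-invertible) $\overline{\gamma}^{\,*}\overline{\gamma}$ for one particular $X$ does \emph{not} contradict indecomposability of $\bB(\CN,\CN)$: the category can perfectly well be indecomposable while some $X\in\bB(\CM,\CN)$ is supported on a single summand $f_1$, and splitting off a spectral projection of $\overline{\gamma}^{\,*}\overline{\gamma}$ yields no direct-sum decomposition of $\bB(\CN,\CN)$. So the Perron--Frobenius/indecomposability step you defer to is not a proof; it restates the conclusion, and in the form you state it (``all entries equal after normalization'') it is not what the theory of matrix dimension gives.

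The missing idea, which is the actual content of the paper's proof, is to use the indecomposability of $\bB(\CN,\CN)$ to manufacture an $X$ that is connected \emph{and} has full left support over $\one_\CN$: choose $X_0\in\bB(\CM,\CN)$ non-zero with $X_0=f_1\otimes X_0\otimes e_1$, choose (by indecomposability of $\bB(\CN,\CN)$) non-zero $Y_j\in f_j\otimes\bB(\CN,\CN)\otimes f_1$ for every $j$, and set $X\dfeq\oplus_j (Y_j\otimes X_0)$. For this connected $X$ with $f_j\otimes X\neq 0$ for all $j$, Proposition 8.30 in \cite{lr} gives the scalarity $\overline{\gamma}_0^{\,*}\overline{\gamma}_0=d_X\,1_{\one_\CN}$ for a standard solution $(\gamma_0,\overline{\gamma}_0)$, and only then does your (correct) rescaling observation $(\gamma,\overline{\gamma})=(d_X^{1/2}\gamma_0,\,d_X^{-1/2}\overline{\gamma}_0)$ finish the argument. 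Both ingredients --- the construction of a connected, fully supported $X$ and the cited scalarity property of standard solutions for such $X$ --- are absent from your proposal, while the detour through the Frobenius algebra $\overline{Z}\otimes Z$ in $\bB(\CM,\CM)$ and its specialization is beside the point, since the support problem to be solved lives over $\one_\CN$.
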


\begin{proof}
Let $\one_{\CM} = \oplus_{i} e_i$ and $\one_{\CN} = \oplus_{j} f_j$, where $e_i$ and $f_j$ are the simple sub-1-morphisms of $\one_\CM$ and $\one_\CN$, respectively. Since $\bB(\CM, \CN) \neq 0$, we may choose $X_0 \in \bB(\CM, \CN)$ such that $X_0 = f_1 \otimes X_0 \otimes e_1 \neq 0$. Since $\bB(\CN, \CN)$ is indecomposable, 
we can also choose non-zero $Y_{j} \in f_j \otimes \bB(\CN, \CN) \otimes f_1$ for every $j$. By setting $X = \oplus_{j} (Y_j \otimes X_0)$ we get a connected 1-morphism in $\bB(\CM, \CN)$. Let $(\gamma_0, \overline{\gamma}_0)$ be a standard solution of the conjugate equations for $X$ and $\overline{X}$. Since $X$ is connected and $f_j \otimes X \neq 0$ for every $j$, by Proposition 8.30 in \cite{lr}, we have that $\overline{\gamma}_0^{\, *}\overline{\gamma}_0 = d_X 1_{\one_\CN}$. Then $(\gamma, \overline{\gamma}) = (d_X^{1/2}\gamma_0, d_X^{-1/2}\overline{\gamma}_0)$ is a solution of the conjugate equations with the desired normalization.
\end{proof}

Observe that 
\begin{align*}
(\overline{X} \otimes X, 1_{\overline{X}} \otimes \overline{\gamma}^{\, *} \otimes 1_X, \gamma) 
\end{align*}
is a special C$^*$-Frobenius algebra in $\bB(\CM, \CM)$, which is also standard, in the sense of Definition \ref{def:standardA}, by Proposition \ref{prop:CX_X_solution}.

\begin{lem}\label{lem:2-cat_std_Frob_bimod_equ}
Fix an object $\CM$ in $\bB$. Assume that for every other object $\CN$ in $\bB$ there is a non-zero $X_\CN \in \bB(\CM, \CN)$ and a solution $(\gamma_\CN, \overline{\gamma}_\CN)$ of the conjugate equations \mref{equ:conjeq} for $X_{\CN}$ and $\overline{X}_\CN$ such that $\overline{\gamma}^{\, *}_\CN\overline{\gamma}_\CN = 1_{\one_{\CN}}$. 

Then there is a locally\footnote{\textit{locally} means referred to each $*$-functor $\Theta_{\CN,\CL}$} essentially surjective and fully faithful $*$-bifunctor 
\begin{align*}
\Theta: \bB \to \sBMod_{\bB(\CM, \CM)}
\end{align*}
defined as follows
\begin{enumerate}
\item $\Theta(\CN)$ is the special C$^*$-Frobenius algebra in $\bB(\CM, \CM)$ defined by
\begin{align*}
  \Theta(\CN) \dfeq (\overline{X}_\CN\otimes X_\CN, 1_{\overline{X}_\CN} \otimes \overline{\gamma}^{\, *}_\CN \otimes 1_{X_\CN}, \gamma_\CN).
\end{align*}
\item $\Theta_{\CN, \CL}: \bB(\CN, \CL) \to \sBMod_{\bB(\CM, \CM)}(\Theta(\CN), \Theta(\CL))$
is the $*$-functor defined by 
\begin{align*}
  Y \mapsto \Theta_{\CN, \CL}(Y) &\dfeq \overline{X}_\CL \otimes Y \otimes X_\CN\\
  t \mapsto \Theta_{\CN, \CL}(t) &\dfeq 1_{\overline{X}_\CL} \otimes t \otimes 1_{X_\CN}
\end{align*} 
where $\Theta_{\CN, \CL}(Y)$ bears the structure of a $\Theta(\CL)$-$\Theta(\CN)$-bimodule with left and right actions respectively induced by $\overline{\gamma}^{\, *}_{\CL}$ and $\overline{\gamma}^{\, *}_{\CN}$.
\end{enumerate}
\end{lem}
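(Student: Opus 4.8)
The plan is to verify the bifunctor axioms for $\Theta$ step by step, then prove it is a $^*$-bifunctor, locally fully faithful and locally essentially surjective. First I would check that $\Theta(\CN)$ is a well-defined object of $\sBMod_{\bB(\CM,\CM)}$: since $(\gamma_\CN,\overline{\gamma}_\CN)$ solves the conjugate equations for $X_\CN$ and $\overline{X}_\CN$ with the normalization $\overline{\gamma}_\CN^{\,*}\overline{\gamma}_\CN = 1_{\one_\CN}$, the triple $(\overline{X}_\CN \otimes X_\CN,\, 1_{\overline{X}_\CN}\otimes\overline{\gamma}_\CN^{\,*}\otimes 1_{X_\CN},\, \gamma_\CN)$ is exactly the special C$^*$-Frobenius algebra structure discussed just before the statement (and it is in fact standard by Proposition \ref{prop:CX_X_solution}, though only specialness is needed here); the normalization is precisely what makes the unit property hold on the nose, and the Frobenius/associativity/multiplicativity conditions follow from the conjugate equations by the standard graphical manipulations. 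The subtle point is to confirm $mm^* = 1$, i.e. $(1_{\overline{X}_\CN}\otimes\overline{\gamma}_\CN^{\,*}\otimes 1_{X_\CN})(1_{\overline{X}_\CN}\otimes\overline{\gamma}_\CN\otimes 1_{X_\CN}) = 1$, which is again $\overline{\gamma}_\CN^{\,*}\overline{\gamma}_\CN = 1_{\one_\CN}$ tensored with identities.

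Next I would equip $\Theta_{\CN,\CL}(Y) = \overline{X}_\CL \otimes Y \otimes X_\CN$ with the claimed bimodule structure: the left $\Theta(\CL)$-action uses $\overline{\gamma}_\CL^{\,*}$ to contract a copy of $X_\CL \otimes \overline{X}_\CL$ appearing in $\Theta(\CL) \otimes \Theta(\CL)(Y)_{\mathrm{tens}} = \overline{X}_\CL\otimes X_\CL\otimes\overline{X}_\CL\otimes Y\otimes X_\CN$ down to $\overline{X}_\CL\otimes Y\otimes X_\CN$, and symmetrically for the right $\Theta(\CN)$-action via $\overline{\gamma}_\CN^{\,*}$. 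The left- and right-module axioms, their compatibility, and unitality all reduce to the conjugate equations for $X_\CL$ and $X_\CN$ together with the normalization; specialness $ll^* = 1 = rr^*$ of the bimodule again comes directly from $\overline{\gamma}^{\,*}\overline{\gamma} = 1$. Functoriality of $\Theta_{\CN,\CL}$ on 2-morphisms ($\Theta_{\CN,\CL}(t) = 1_{\overline{X}_\CL}\otimes t\otimes 1_{X_\CN}$ is a bimodule map, preserves composition, identities and $*$) is immediate from bifunctoriality of $\otimes$ and the interchange law. For the bifunctor structure on $\Theta$ I would exhibit the coherence isomorphism $\Theta_{\CM,\CL}(Y')\rt{\Theta(\CM)}\Theta_{\CN,\CM}(Y) \cong \Theta_{\CN,\CL}(Y'\otimes Y)$: the coequalizer projection $p^{\Theta(\CM)}$ on $(\overline{X}_\CL\otimes Y'\otimes X_\CM)\otimes(\overline{X}_\CM\otimes Y\otimes X_\CN)$ is built from $\overline{\gamma}_\CM^{\,*}$, and contracting that $X_\CM\otimes\overline{X}_\CM$ via one of the conjugate equations for $X_\CM$ identifies the subobject cut out by $p^{\Theta(\CM)}$ unitarily with $\overline{X}_\CL\otimes Y'\otimes Y\otimes X_\CN = \Theta_{\CN,\CL}(Y'\otimes Y)$; one then checks this intertwines the bimodule actions and is natural in $Y,Y'$, and that $\Theta(\one_\CN) \cong \Theta_\CN$ as the horizontal unit $\Theta(\CN)$-bimodule (again by a conjugate equation). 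Finally, local essential surjectivity and local fullness/faithfulness follow the template of the tensor-category case: any special $\Theta(\CL)$-$\Theta(\CN)$-bimodule is, up to isomorphism, of the form $\overline{X}_\CL\otimes Y\otimes X_\CN$ for a suitable $Y\in\bB(\CN,\CL)$ recovered by compressing with the isometries $\gamma$ (using that $\overline{X}_\CL\rt{\Theta(\CL)}(-)\rt{\Theta(\CN)}X_\CN$ is inverse to $\Theta_{\CN,\CL}$ up to the coherences just constructed), and the map $t\mapsto 1_{\overline{X}_\CL}\otimes t\otimes 1_{X_\CN}$ on $\bB(\CN,\CL)(Y_1,Y_2) \to \sBMod_{\bB(\CM,\CM)}(\Theta(\CN),\Theta(\CL))(\Theta_{\CN,\CL}Y_1,\Theta_{\CN,\CL}Y_2)$ is a bijection because conjugation by the standard solutions is a Frobenius-reciprocity equivalence on Hom-spaces.

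The main obstacle I anticipate is the bookkeeping for the coherence isomorphism $\Theta_{\CM,\CL}(Y')\rt{\Theta(\CM)}\Theta_{\CN,\CM}(Y)\cong\Theta_{\CN,\CL}(Y'\otimes Y)$ and its compatibility with associators and unitors of the bicategory $\sBMod_{\bB(\CM,\CM)}$: one must match the relative tensor product $\rt{\Theta(\CM)}$ (defined via the projection $p^{\Theta(\CM)}$) against the plain horizontal composition, keeping careful track of which copy of $X_\CM$, $\overline{X}_\CM$ is being contracted by which leg of $\overline{\gamma}_\CM$, and verifying the hexagon/triangle identities. This is the kind of routine-but-error-prone graphical calculation that, as the excerpt itself notes for nearby results, "is not hard to check" once set up correctly but is tedious to write out in full; everything else is a direct application of the conjugate equations and the normalization $\overline{\gamma}^{\,*}\overline{\gamma}=1$, together with the properties of $\sBMod_\bB$ established in Proposition \ref{prop:special_Fro_alg_bicat}.
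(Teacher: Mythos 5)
Your proposal is correct and follows essentially the same route as the paper: one checks specialness of $\Theta(\CN)$ and of the bimodules $\Theta_{\CN,\CL}(Y)$ from the normalization, identifies $\Theta_{\CL,\CP}(Z)\rt{\Theta(\CL)}\Theta_{\CN,\CL}(Y)$ with $\Theta_{\CN,\CP}(Z\otimes Y)$ through the explicit projection built from $\overline{\gamma}_\CL$, and runs the argument of Propositions 3.24--3.25 in \cite{bklr} for local essential surjectivity and fullness, the conditions $\overline{\gamma}^{\,*}_\CL\overline{\gamma}_\CL=1_{\one_\CL}$, $\overline{\gamma}^{\,*}_\CN\overline{\gamma}_\CN=1_{\one_\CN}$ being exactly what makes $t\mapsto 1_{\overline{X}_\CL}\otimes t\otimes 1_{X_\CN}$ injective in the multitensor setting. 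The coherence bookkeeping you single out as the main obstacle is sidestepped in the paper by taking the defining isometry of the relative tensor product to be $1_{\overline{X}_\CP\otimes Z}\otimes\overline{\gamma}_\CL\otimes 1_{Y\otimes X_\CN}$ itself, so that $\Theta_{\CL,\CP}(Z)\rt{\Theta(\CL)}\Theta_{\CN,\CL}(Y)=\Theta_{\CN,\CP}(Z\otimes Y)$ holds as an equality and horizontal composition is preserved strictly, with no hexagon or triangle checks needed.
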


\begin{proof}
Proceeding as in the proof of Proposition 3.24 and 3.25 in \cite{bklr}, one can show that $\Theta_{\CN, \CL}$ is an essentially surjective and fully faithful $*$-functor for every pair of objects $\CL$, $\CN$. We only have to observe that the maps $t \mapsto 1_{\overline{X}_\CL} \otimes t$ and $t \mapsto t \otimes 1_{X_\CN}$ are injective due to the conditions $\overline{\gamma}^{\, *}_\CL\overline{\gamma}_\CL = 1_{\one_{\CL}}$ and $\overline{\gamma}^{\, *}_\CN\overline{\gamma}_\CN = 1_{\one_{\CN}}$.

To show the $*$-bifunctoriality of $\Theta$, note that the projection defined in equation \mref{equ:coequ_proj} reads in this case
\begin{align*}
  p^{\Theta(\CL)}_{\Theta_{\CL, \CP}(Z) \,\otimes\, \Theta_{\CN, \CL}(Y)} = (1_{\overline{X}_{\CP} \otimes Z} \otimes \overline{\gamma}_{\CL} \otimes 1_{Y \otimes X_{\CN}})(1_{\overline{X}_{\CP} \otimes Z} \otimes \overline{\gamma}^{\, *}_{\CL} \otimes 1_{Y \otimes X_{\CN}}).   
\end{align*} 
Therefore, we have $\Theta_{\CL, \CP}(Z) \rt{\Theta(\CL)} \Theta_{\CN, \CL}(Y) = \overline{X}_{\CP} \otimes Z \otimes Y \otimes X_{\CN} = \Theta_{\CN, \CP}(Z \otimes Y)$ and $\Theta_{\CL, \CP}(s) \rt{\Theta(\CL)} \Theta_{\CN, \CL}(t) = \Theta_{\CN, \CP}(s \otimes t)$.

Moreover, $\Theta_{\CN, \CN}(\one_\CN) = \overline{X}_{\CN} \otimes X_\CN = \one_{\Theta(\CN)}$. Thus $\Theta$ is a $*$-bifunctor.
\end{proof}

Let $\CC$ be an indecomposable multitensor C$^*$-category and let $\CC = \oplus_{i,j}\CC_{ij}$ be the decomposition mentioned at the beginning of this section. We can apply Lemma \ref{lem:2-cat_std_Frob_bimod_equ} and \ref{lem:indecom_f_alg} to $\CC=\bB(\CM,\CM)$ viewed as a C$^*$-bicategory with only one object, thus $\CM=\CN=\CL$. By observing that the connected $X$ constructed in the proof of \ref{lem:indecom_f_alg} fulfills $\overline{X}\otimes X = e_1 \otimes \overline{X}\otimes X\otimes e_1$, thus $\overline{X}\otimes X\in\CC_{11}$, setting $H := \overline{X}\otimes X$ and $\CC_0 := \CC_{11}$ we conclude

\begin{cor}\label{cor:ind_ten_equ_alg_ten}
For every indecomposable multitensor C$^*$-category $\CC$, there exist a tensor C$^*$-category $\CC_0$ and a standard C$^*$-Frobenius algebra $(H, m, \iota)$ in $\CC_0$ such that $\CC \simeq \sBMod_{\CC_0}(H,H)$ as multitensor C$^*$-categories. 
\end{cor}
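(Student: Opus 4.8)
The plan is to derive Corollary \ref{cor:ind_ten_equ_alg_ten} as essentially a specialization of Lemma \ref{lem:2-cat_std_Frob_bimod_equ} combined with Lemma \ref{lem:indecom_f_alg}, applied to the rigid C$^*$-bicategory $\bB$ that is $\CC$ itself regarded as having a single object $\CM$ (so $\CM=\CN=\CL$ throughout). First I would check the hypothesis of Lemma \ref{lem:2-cat_std_Frob_bimod_equ}: with only one object there is nothing to verify ``for every other object'', so the hypothesis reduces to the existence of a non-zero $X \in \bB(\CM,\CM) = \CC$ and a solution $(\gamma,\overline{\gamma})$ of the conjugate equations for $X$, $\overline{X}$ with $\overline{\gamma}^{\,*}\overline{\gamma} = 1_{\one_\CM}$. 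This is precisely the content of Lemma \ref{lem:indecom_f_alg} applied with $\bB(\CM,\CM) = \bB(\CN,\CN) = \CC$ (indecomposable by assumption) and $\bB(\CM,\CN) = \CC \neq 0$. So the hypothesis is met.

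Next I would feed this into Lemma \ref{lem:2-cat_std_Frob_bimod_equ} to obtain the essentially surjective, fully faithful *-functor $\Theta_{\CM,\CM}: \CC = \bB(\CM,\CM) \to \sBMod_{\CC}(\Theta(\CM),\Theta(\CM))$, where $\Theta(\CM) = (\overline{X}\otimes X,\, 1_{\overline{X}} \otimes \overline{\gamma}^{\,*} \otimes 1_X,\, \gamma)$ is the associated special C$^*$-Frobenius algebra in $\CC$. By the remark preceding Lemma \ref{lem:2-cat_std_Frob_bimod_equ}, invoking Proposition \ref{prop:CX_X_solution}, this algebra is in fact \emph{standard} in the sense of Definition \ref{def:standardA}. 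An essentially surjective fully faithful *-functor is an equivalence of C$^*$-categories, and since it is also compatible with the relative tensor product $\rt{\Theta(\CM)}$ and sends $\one_\CM$ to $\one_{\Theta(\CM)}$ (again by Lemma \ref{lem:2-cat_std_Frob_bimod_equ}), it is a (mon)oidal equivalence, hence an equivalence of multitensor C$^*$-categories $\CC \simeq \sBMod_{\CC}(\Theta(\CM),\Theta(\CM))$.

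The remaining point is to identify a \emph{tensor} C$^*$-category $\CC_0$ inside which this standard C$^*$-Frobenius algebra lives. Here I would use the decomposition $\CC = \oplus_{i,j}\CC_{ij}$ with $\CC_{ij} = \one_i \otimes \CC \otimes \one_j$ recalled at the start of the section, together with a closer look at the construction of $X$ in the proof of Lemma \ref{lem:indecom_f_alg}. That proof builds $X = \oplus_j (Y_j \otimes X_0)$ with $X_0 = f_1 \otimes X_0 \otimes e_1$; in the one-object setting the $e_i = f_i = \one_i$ are the simple summands of $\one = \one_\CM$, so $X = \one_1 \otimes X$ and hence $\overline{X} \otimes X = \one_1 \otimes \overline{X}\otimes X \otimes \one_1 \in \CC_{11}$. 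By property (2) of the decomposition, $\CC_{11}$ is a tensor C$^*$-category with simple tensor unit $\one_1$, and $H := \overline{X}\otimes X$ together with the multiplication and unit from $\Theta(\CM)$ is a standard C$^*$-Frobenius algebra in $\CC_0 := \CC_{11}$. Combining, $\CC \simeq \sBMod_{\CC}(H,H)$; and since the bimodule structures only see the subcategory generated by $H$ under tensor products and subobjects, which is contained in $\CC_0$, one has $\sBMod_{\CC}(H,H) = \sBMod_{\CC_0}(H,H)$, yielding the claim.

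I do not expect a genuine obstacle here, since all the technical work has been done in Lemmas \ref{lem:indecom_f_alg}, \ref{lem:2-cat_std_Frob_bimod_equ} and Proposition \ref{prop:CX_X_solution}; the corollary is a matter of assembling them and tracking where $H$ sits. The one point requiring a little care is the last identification $\sBMod_{\CC}(H,H) = \sBMod_{\CC_0}(H,H)$: a priori $\sBMod_{\CC}(H,H)$ could contain bimodules supported outside $\CC_{11}$, but any special $H$-$H$-bimodule $M$ satisfies $M = l_M l_M^* M \subseteq H \otimes M$ and $M = r_M r_M^* M \subseteq M \otimes H$, forcing $M = \one_1 \otimes M \otimes \one_1 \in \CC_{11}$, so in fact the two bimodule categories coincide on the nose. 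That closes the argument.
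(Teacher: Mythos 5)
Your argument is correct and is essentially the paper's own proof: apply Lemma \ref{lem:indecom_f_alg} and Lemma \ref{lem:2-cat_std_Frob_bimod_equ} to $\CC$ viewed as a one-object C$^*$-bicategory, note (via Proposition \ref{prop:CX_X_solution}) that the resulting algebra is standard, observe that the connected $X$ constructed there satisfies $\overline{X}\otimes X = \one_1\otimes \overline{X}\otimes X\otimes \one_1 \in \CC_{11}$, and set $H:=\overline{X}\otimes X$, $\CC_0:=\CC_{11}$; your final check that every special $H$-$H$-bimodule is supported in $\CC_{11}$, so that $\sBMod_{\CC}(H,H)=\sBMod_{\CC_0}(H,H)$, just makes explicit a point the paper leaves implicit. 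One cosmetic slip: the construction in Lemma \ref{lem:indecom_f_alg} gives $X = X\otimes\one_1$ (right support in $\one_1$), not $X=\one_1\otimes X$, but the conclusion you draw from it is the intended one.
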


\section{Realization of multitensor C$^*$-categories}
\subsection{Connes' bimodules}\label{sec:connes_bim}\,\\ \vspace{-2mm}

In this section we review some basic facts about Connes' bimodules (or correspondences) over von Neumann algebras \cite{acncg}, specializing to direct sums of II$_1$ factors \cite{gdhj}. Let $\CM = \oplus_i \CM_i$ and $\CN = \oplus_j\CN_j$, where $\CM_i$ and $\CN_j$ are II$_1$ factors and $i,j$ run in two finite sets of indices. Denote by $p_i$ and $q_j$ the corresponding minimal central projections in $\CM$ and $\CN$, respectively. 

An \textbf{$\CN$-$\CM$-bimodule} is a Hilbert space $X$ with commuting left $\CN$-action and right $\CM$-action given by normal unital (not necessarily faithful) representations of $\CN$ and $\CM^{op}$ on $X$, where $\CM^{op}$ is the opposite von Neumann algebra of $\CM$. It is clear $X$ is an $\CN$-$\CM$-bimodule if and only if $X = \oplus_{ji} X_{ji}$ where $X_{ji} \dfeq q_j X p_i$ are $\CN_j$-$\CM_i$-bimodules. Note that $X_{ji}$ may be zero. The left $\CN$-action and right $\CM$-action are recovered by 
\begin{align*}
  (\oplus_j b_j) (\oplus_{ji}\xi_{ji})(\oplus_i a_i) \dfeq \oplus_{ji} (b_j \xi_{ji} a_i)
\end{align*}
where $a_i \in \CM_i$, $b_j \in \CN_j$ and $\xi_{ji} \in X_{ji}$. For two $\CN$-$\CM$-bimodules $X = \oplus_{ji} X_{ji}$ and $X' = \oplus_{ji} X'_{ji}$, a bounded operator $f: X \to X'$ is an $\CN$-$\CM$-bimodule \textbf{intertwiner} if and only if $f = \oplus_{ji} f_{ji}$, where $f_{ji}: X_{ji} \to X'_{ji}$ are $\CN_j$-$\CM_i$-bimodule intertwiners. An $\CN$-$\CM$-bimodule $X = \oplus_{ji} X_{ji}$ is \textbf{finite} if and only if $X_{ji}$ are finite $\CN_j$-$\CM_i$-bimodules. See Chapter 9 in \cite{ek} for the definition of finite bimodules over II$_1$ factors. 

\begin{notat}
The category of all finite $\CN$-$\CM$-bimodules with their intertwiners as morphisms is denoted by $\vN(\CM,\CN)$. For the choice of ordering cf.\ Notation \ref{not:BAisAtoB}.
\end{notat}

In the following, we use $\tau_\CM$ to denote the normal faithful trace on $\CM$ such that its restriction to $\CM_i$ is the unique tracial state for every $i$. For $a \in \CM$, denote $\|a\|_2 \dfeq \sqrt{\tau_\CM (a^*a)}$. Recall that the set of \textbf{$\CM$-bounded}, or equivalently \textbf{$\CN$-bounded}, \textbf{vectors} in a finite $\CN$-$\CM$-bimodule $X$ is 
\begin{align*}
  X^{\bdd} &\dfeq \{\xi \in X: \|\xi a\| \leq C_\xi \|a\|_2, C_\xi > 0, \forall a \in \CM\}\\
  &= \{\xi \in X: \|b\xi\| \leq C'_\xi \|b\|_2, C'_\xi > 0, \forall b \in \CN\}
\end{align*}
see Proposition 9.57 in \cite{ek}. 
If $X = \oplus_{ji}X_{ji}$, then it is clear that $X^{\bdd} =\oplus_{ji}X_{ji}^{\bdd}$. By the Sakai-Randon-Nikdo\'{y}m theorem, see Theorem 7.36 in \cite{kr2}, there exist an $\CN$-valued inner product ${}_\CN \braket{\cdot}{\cdot}$ and an $\CM$-valued inner product $\braket{\cdot}{\cdot}_{\CM}$ on $X^{\bdd}$ defined by the formulas
\begin{align}\label{equ:l_r_inner_prod_def}
  \braket{\xi}{b\beta}_X = \tau_\CN(b \, {}_\CN\braket{\beta}{\xi}), \quad \braket{\xi}{\beta a}_X = \tau_\CM(\braket{\xi}{\beta}_\CM \, a), \quad \forall a \in \CM, b \in \CN
\end{align}
where $\xi,\beta \in X^{\bdd}$ and $\braket{\cdot}{\cdot}_X$ is the inner product of $X$ which is linear in the second variable. If $t: X_1 \to X_2$ is an intertwiner of $\CN$-$\CM$-bimodules, then $t\xi \in X_2^{\bdd}$ for every $\xi \in X_1^{\bdd}$, and
\begin{align*}
 {}_{\CN}\braket{\beta}{t\xi} = {}_{\CN}\braket{t^*\beta}{\xi}, \quad 
 \braket{t\xi}{\beta}_{\CM}= \braket{\xi}{t^*\beta}_{\CM}, \quad \forall \xi \in X_1^{\bdd}, \beta \in X_2^{\bdd}.
\end{align*}

\begin{rem}
With the inner products ${}_\CN \braket{\cdot}{\cdot}$ and $\braket{\cdot}{\cdot}_{\CM}$, the vector space $X^{\bdd}$ is a Hilbert $\CN$-$\CM$-bimodule of \textit{finite type}, see Definition 1.3 and 1.9 \cite{ty}, and Proposition 1.12 and 1.18 in \cite{ty} or Theorem 9.12 in \cite{ek} for the proof of this fact. 
\end{rem}

For a finite $\CM$-$\CL$-bimodule $X$ and a finite $\CN$-$\CM$-bimodule $Y$, let $Y^{\bdd} \odot X^{\bdd}$ be the algebraic tensor product of $Y^{\bdd}$ and $X^{\bdd}$. Consider the positive sesquilinear form on $Y^{\bdd} \odot X^{\bdd}$ defined by 
\begin{align*}
  \braket{\xi_1 \odot \beta_1}{\xi_2 \odot \beta_2} &\dfeq \tau_{\CN}({}_{\CN}\braket{\xi_2 \, {}_{\CM}\braket{\beta_2}{\beta_1}}{\xi_1})\\
  &= \tau_{\CM}(\braket{\xi_1}{\xi_2}_{\CM} {}_{\CM}\braket{\beta_2}{\beta_1})\\
  &= \tau_{\CL}(\braket{\beta_1}{\braket{\xi_1}{\xi_2}_{\CM} \, \beta_2}_{\CL}). 
\end{align*}
Let $\CI$ be the subspace of $Y^{\bdd} \odot X^{\bdd}$ consisting of null vectors $\zeta$, i.e., $\braket{\zeta}{\zeta} = 0$. Then the \textbf{relative tensor product} (or Connes' fusion) of $Y$ and $X$ with respect to $\tau_{\CM}$ is the $\CN$-$\CL$-bimodule $Y \rt{\CM} X$ obtained by completing of $(Y^{\bdd} \odot X^{\bdd})/\CI$ with respect to the inner product defined above. See Section IX.3 in \cite{tkII} or Section 9.7 in \cite{ek} for the basic properties of the relative tensor product. If $Y = \oplus_{kj} Y_{kj}$ and $X = \oplus_{ji} X_{ji}$, then  
\begin{align*}
  Y \rt{\CM} X = \underset{ki}{\scalebox{1.5}{$\oplus$}} \scalebox{1.2}{$($}\oplus_{j} (Y_{kj} \rt{\CM_j} X_{ji})\scalebox{1.2}{$)$} 
\end{align*}
because $\xi a \rt{\CM} \beta = \xi \rt{\CM} a\beta$ in $Y \rt{\CM} X$ for $\xi \in Y$, $\beta \in X$, $a \in \CM$. By Corollary 9.64 in \cite{ek}, $Y \rt{\CM} X$ is a finite $\CN$-$\CL$-bimodule. 

For every finite $\CN$-$\CM$-bimodule $X$, the \textbf{dual} of $X$ is given by the conjugate $\CM$-$\CN$-bimodule $\overline{X}$. Namely, $\overline{X}$ is the conjugate Hilbert space with left and right actions given by $a \overline{\xi} b \dfeq \overline{b^* \xi a^*}$. It is clear that $\overline{X}^{\,\bdd} = \overline{X^{\bdd}}$ and 
\begin{align*}
  {}_{\CM}\braket{\overline{\xi}}{\overline{\beta}} = \braket{\xi}{\beta}_{\CM}, \quad 
  \braket{\overline{\beta}}{\overline{\xi}}_{\CN} = {}_{\CN}\braket{\beta}{\xi}, \quad \forall \xi, \beta \in X.
\end{align*}

\begin{notat}\label{not:vN}
It is known that finite direct sums of II$_1$ factors, finite bimodules and intertwiners constitute a rigid C$^*$-bicategory with finite-dimensional centers in the sense of Section \ref{sec:rigid_Cstar_bicats}. We denote it by $\vN$. Recall that we denoted by $\vN(\CM, \CN)$ the C$^*$-category of $\CN$-$\CM$-bimodules.
\end{notat}

The categorical condition expressed in Definition \ref{def:fin_dim_centers} coincides indeed with the finite-dimensionality of the centers of the von Neumann algebras. The horizontal composition of 1-morphisms in $\vN$ is the relative tensor product of bimodules and the horizontal unit 1-morphism in $\vN(\CM,\CM)$ is the $\CM$-$\CM$-bimodule $L^2(\CM, \tau_\CM)$.
In the following, we regard $\CM$ as a subspace of $L^2(\CM, \tau_\CM)$. Moreover, we identify $(\xi \rt{\CN} \beta) \rt{\CM} \eta$, $b \rt{\CN} \beta$, $\beta \rt{\CM} a$ with $\xi \rt{\CN} (\beta \rt{\CM}\eta)$, $b \beta$, $\beta a$, respectively, see Proposition 3.19 and Theorem 3.20 in \cite{tkII}.

\subsection{C$^*$-Frobenius algebras in $\vN$ and extensions}\label{sec:qsystemsinvN}\,\\ \vspace{-2mm}

Let $\CM$ be a factor and $\End(\CM)$ the tensor C$^*$-category of all the endomorphisms of $\CM$. When $\CM$ is of type III, it is known from \cite{longh} that C$^*$-Frobenius algebras in $\End(\CM)$, also called \textit{Q-systems}, describe the unital not necessarily factorial extensions of $\CM$, see also Theorem 3.11 in \cite{bklr}. We now show the analogous statement for C$^*$-Frobenius algebras in $\vN(\CM,\CM)$, where $\CM$ is a finite direct sum of II$_1$ factors. 

Let 
\begin{align*}
(H, m, \iota) = (H, m:H \rt{\CM} H \to H, \iota:L^2(\CM, \tau_\CM) \to H)
\end{align*} 
be a non-zero C$^*$-Frobenius algebra in $\vN(\CM,\CM)$. Our goal is to show that $H$ is unitarily isomorphic to a C$^*$-Frobenius algebra of the form $\overline{X}\rt{\CN} X$, where $\CN$ is a type II$_1$ von Neumann algebra which extends $\CM$, and $X$ is a finite $\CN$-$\CM$-bimodule.

Since $H$ is a finite $\CM$-$\CM$-bimodule and the center of $\CM$ is finite-dimensional, there exist a left basis $\{\xi_1, \ldots, \xi_l\}$ and a right basis $\{\beta_1, \ldots, \beta_r\}$ of $H^{\bdd}$ such that
\begin{align*}
  \xi = \sum_{i} {}_\CM \braket{\xi}{\xi_i}\xi_i, \quad \beta 
  = \sum_j \beta_j \braket{\beta_j}{\beta}_\CM, \quad \forall \xi, \beta \in H^{\bdd}
\end{align*}
see Theorem 9.12 in \cite{ek}. See also Proposition 1 in \cite{f} and Section 2 in \cite{fk}.

Let $\gamma: L^2(\CM, \tau_\CM) \to \overline{H} \rt{\CM} H$ and $\overline{\gamma}: L^2(\CM, \tau_\CM) \to  H\rt{\CM}\overline{H}$ be the $\CM$-$\CM$-bimodule maps defined by
\begin{align}\label{equ:r_def}
  \gamma(a) := \sum_i \overline{\xi_i} \rt{\CM} (\xi_i a), \quad
  \overline{\gamma}(a) := \sum_i (a\beta_i) \rt{\CM} \overline{\beta_i}, \quad \forall a \in \CM.
\end{align}
By Section 4 in \cite{ty}, $(\gamma, \overline{\gamma})$ is a solution of the conjugate equations \mref{equ:conjeq} for $H$ and $\overline{H}$. Moreover 
\begin{align*}
  \gamma^*(\overline{\xi} \rt{\CM} \beta) = \braket{\xi}{\beta}_\CM, \quad
  \overline{\gamma}^{\, *}(\beta \rt{\CM} \overline{\xi}) = {}_\CM \braket{\beta}{\xi},
  \quad \forall \xi, \beta \in H^{\bdd}.
\end{align*}

As in the factor case, due to the C$^*$-Frobenius algebra structure on $(H, m, \iota)$, the set of bounded vectors $H^{\bdd}$ is a von Neumann algebra acting on $H$, see also Section 2.3 in \cite{wy1}. Indeed
\begin{align*}
  \|m(\xi \rt{\CM} \beta)\|^2 \leq \|m\|^2\|\xi \rt{\CM} \beta\|^2
  =\|m\|^2\braket{\beta}{\braket{\xi}{\xi}_{\CM}\beta} \leq 
  \|m\|^2\|\braket{\xi}{\xi}_{\CM}\|\|\beta\|^2, \quad \forall \xi, \beta \in H^{\bdd}.
\end{align*}
Therefore, the left multiplication by $\xi$, i.e., $\beta\in H \mapsto m(\xi \rt{\CM} \beta)$, extends to a bounded operator $L_\xi: H \to H$. Moreover, $L_\xi$ commutes with the right $\CM$-action since 
\begin{align}\label{equ:N_comm_right_act}
  L_\xi(\beta  a)= m(\xi \rt{\CM} (\beta a)) = L_\xi(\beta) a, \quad \forall \beta \in H^{\bdd}, a \in \CM.
\end{align}

\begin{lem}\label{lem:N_SOT_closed}
Let $\CN \dfeq \{L_\xi: \xi \in H^{\bdd}\}$. Then $\CN$ is a von Neumann algebra and $a \to L_{\iota(a)}$ is a $*$-homomorphism from $\CM$ into $\CN$. If the left action of $\CM$ on $H$ is faithful, then $a \to L_{\iota(a)}$ is injective. 
\end{lem}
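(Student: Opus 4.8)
The plan is to verify three things in order: that $\CN$ is a $*$-algebra, that it is closed in the strong operator topology (whence a von Neumann algebra by the bicommutant theorem), and that $a \mapsto L_{\iota(a)}$ is a unital $*$-homomorphism which is injective when the left $\CM$-action is faithful.

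First I would establish the algebraic structure. For the product, associativity of $m$ gives $L_{\xi} L_{\eta} = L_{m(\xi \rt{\CM} \eta)}$ on bounded vectors, and one checks $m(\xi \rt{\CM}\eta) \in H^{\bdd}$ using the estimate already displayed before the statement (with $\eta$ in place of $\beta$), so $\CN$ is closed under composition. For the adjoint, the Frobenius property is the crux: since $m^*$ is an $A$-$A$-bimodule map, one computes $\braket{L_\xi \beta_1}{\beta_2}_X = \braket{m(\xi \rt{\CM}\beta_1)}{\beta_2}_X = \braket{\xi \rt{\CM}\beta_1}{m^*\beta_2}_X$, and then uses the Frobenius identity $(m \otimes 1)(1 \otimes m^*) = m^*m$ together with the solution $(\gamma,\overline\gamma)$ of the conjugate equations to rewrite this as $\braket{\beta_1}{L_{\xi'}\beta_2}_X$ for an explicit $\xi' \in H^{\bdd}$ built from $\xi$ via $\overline\gamma^{\,*}$ (concretely $\xi' = (\overline\gamma^{\,*}\otimes 1_H)(1_{\overline H}\otimes m^*\iota \text{-type expression})\xi$, i.e.\ the ``conjugate'' of $\xi$). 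So $L_\xi^* = L_{\xi^\sharp}$ with $\xi^\sharp \in H^{\bdd}$, and $\CN$ is a $*$-algebra. Unitality: $L_{\iota(1)} = L_{\iota(1_\CM)}$ acts as the identity because $\iota$ is the algebra unit, $m(\iota(1)\rt{\CM}\beta) = \beta$.

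Next, for SOT-closedness, the standard trick is to identify $\CN$ with its own image under a faithful normal representation for which closedness is visible. The map $\xi \mapsto L_\xi$ is related to the right-bounded-vector picture: using the left basis $\{\xi_i\}$ one has $L_\xi = \sum_i L_{\xi_i} \,{}_\CM\braket{\xi}{\xi_i}$-type expansions, and the inner product $\braket{L_\xi}{L_\eta}$ computed in $\CN$ (via the trace coming from $\iota$, i.e.\ $\tau_\CN(L_\eta^* L_\xi) \dfeq \braket{\iota(1)}{m(\eta^\sharp \rt{\CM}\xi)}_H$ appropriately) matches the Hilbert-module structure on $H^{\bdd}$, so $H^{\bdd}$ with this inner product is complete. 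Then a bounded net $L_{\xi_\alpha} \to T$ in SOT forces $\xi_\alpha = L_{\xi_\alpha}\iota(1)$ to converge in $H$ to $T\iota(1)$, which one shows lies in $H^{\bdd}$ (uniform bound on $\|L_{\xi_\alpha}\|$ gives the $\|\cdot\|_2$-estimate in the limit) and satisfies $T = L_{T\iota(1)}$. Hence $\CN$ is SOT-closed, so a von Neumann algebra.

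Finally, $a \mapsto L_{\iota(a)}$ is linear and multiplicative because $\iota$ is an algebra homomorphism: $L_{\iota(a)}L_{\iota(b)} = L_{m(\iota(a)\rt{\CM}\iota(b))} = L_{\iota(ab)}$; it is $*$-preserving because $\iota(a)^\sharp = \iota(a^*)$ (this uses $\iota^* = $ the counit $m^*\iota$ paired suitably, i.e.\ the Frobenius structure again, or more directly that $\braket{L_{\iota(a)}\beta_1}{\beta_2} = \braket{\beta_1}{L_{\iota(a^*)}\beta_2}$ from the right-$\CM$-linearity and the defining relation \eqref{equ:l_r_inner_prod_def}); and it is unital as above. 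For injectivity: if $L_{\iota(a)} = 0$ then in particular $L_{\iota(a)}\iota(1) = \iota(a) \cdot \text{(acting on } \iota(1)\text{)} = $ the image of $a$ under the left $\CM$-action on $H$ applied to $\iota(1)$; unwinding, $L_{\iota(a)}$ contains the left action of $\CM$ on $H$ as a ``corner'', so faithfulness of that action forces $a = 0$.

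The main obstacle I anticipate is the $*$-closedness step — pinning down that $L_\xi^* = L_{\xi^\sharp}$ for a genuinely \emph{bounded} vector $\xi^\sharp$. This is where the full strength of the Frobenius property and the interplay between the two inner products ${}_\CM\braket{\cdot}{\cdot}$, $\braket{\cdot}{\cdot}_\CM$ and the solution $(\gamma,\overline\gamma)$ of the conjugate equations is needed; everything else is a bookkeeping exercise with the bimodule axioms and the basis expansions already recalled before the statement. The SOT-closedness then follows the classical Jones-index/Pimsner–Popa style argument once one knows $\CN$ is a $*$-algebra containing $\CM$ with a trace.
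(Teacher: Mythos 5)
Your overall route is the same as the paper's: show $\CN$ is a unital algebra from associativity and the unit property, get SOT-closedness by evaluating an SOT-convergent net at $\iota(I)$, handle the adjoint via the Frobenius property, and read off the *-homomorphism and injectivity statements at the end. The one step you yourself flag as the crux, however, is not actually closed by what you wrote: the candidate ``$\xi' = (\overline{\gamma}^{\,*}\otimes 1_H)(1_{\overline{H}}\otimes m^*\iota\text{-type expression})\xi$'' does not even typecheck, and the argument needs two concrete ingredients that are missing from your sketch. First, for $\zeta \in H^{\bdd}$ one must know that $m^*\zeta$ is a \emph{finite} sum $\sum_k \zeta^1_k \rt{\CM} \zeta^2_k$ of elementary tensors of bounded vectors (Proposition 9.62 in \cite{ek}). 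Second, and decisively, one must compare the Frobenius solution $(m^*\iota, m^*\iota)$ of the conjugate equations with the solution $(\gamma,\overline{\gamma})$ built from a left/right basis of $H^{\bdd}$: by Lemma 8.12 in \cite{lr} there is an invertible $\CM$-$\CM$-bimodule map $f:\overline{H}\to H$ with $(f\rt{\CM}1_H)\gamma = m^*\iota = (1_H\rt{\CM}(f^{-1})^*)\overline{\gamma}$, and the computation then gives $L_\xi^* = L_{(f^*)^{-1}(\overline{\xi})}$, which lies in $\CN$ because $\overline{\xi}\in\overline{H}^{\,\bdd}=\overline{H^{\bdd}}$ and bounded intertwiners preserve bounded vectors. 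Without this comparison you have no way to exhibit your $\xi^\sharp$ as a genuinely bounded vector, which is exactly the difficulty you anticipated.

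Two further corrections. Your side remark that $\tau_\CN(L_\eta^*L_\xi)\dfeq\braket{\iota(I)}{m(\eta^\sharp\rt{\CM}\xi)}$ ``is a trace matching the module structure'' is false in general: the vector functional $b\mapsto\braket{\iota(I)}{b\,\iota(I)}$ on $\CN$ is typically not tracial, but equals $\tau_\CN(k^2\,\cdot)$ for a nontrivial positive invertible $k$ (this is precisely the content of the paper's subsequent lemma on $k$); fortunately your SOT argument does not use it. Moreover you need not restrict to bounded nets and implicitly invoke Kaplansky: if $L_{\xi_\alpha}\to t$ strongly, then $\xi\dfeq t\iota(I)$ satisfies $\|\xi a\| = \|t\iota(a)\|\le\|t\iota\|\,\|a\|_2$ because $t$ commutes with the right $\CM$-action, so $\xi\in H^{\bdd}$ and $t=L_\xi$ directly, for an arbitrary net. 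Finally, the last part of the lemma is immediate once you observe $L_{\iota(a)}\beta = m(a\iota(I)\rt{\CM}\beta)= a\beta$: the map $a\mapsto L_{\iota(a)}$ \emph{is} the left action of $\CM$ on $H$ (not merely a ``corner'' of it), so it is a unital *-homomorphism, injective exactly when that action is faithful.
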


\begin{proof}
By the associativity of the multiplication in $H$, namely $m(1_H \rt{\CM} m) = m(m \rt{\CM} 1_H)$ where $1_H$ is the identity map on $H$, we get $L_{\xi_1} L_{\xi_2} = L_{m(\xi_1 \rt{\CM} \xi_2)}$. Similarly, the unit property $m(\iota \rt{\CM}1_H) = 1_H$ implies that $L_{\iota(I)} = 1_H$, where $I$ is the unit of $\CM$. Therefore, $\CN$ is a unital algebra of bounded operators on $H$. Assume that $\{L_{\xi_\alpha}\}$ is a net converging to $t$ in the strong operator topology. Since $L_{\xi_\alpha}(\iota(I)) = \xi_\alpha$, then $\xi_{\alpha}$ converges to a vector $\xi := t(\iota(I))$. Note that $t$ commutes with the right $\CM$ action. We have 
\begin{align*}
  \|\xi a\| = \|t(\iota(a))\| \leq \|t \iota\|\|a\|_2, \quad \forall a \in \CM.
\end{align*}
Thus $\xi \in H^{\bdd}$ and 
\begin{align*}
  t\beta = \lim_{\alpha} m(\xi_\alpha \rt{\CM} \beta) 
  = m(\xi \rt{\CM} \beta) = L_\xi(\beta), \quad \forall \beta \in H^{\bdd}.
\end{align*}
Therefore, $\CN$ is strongly closed.

We now show that $L_\xi^* \in \CN$ for every $\xi \in H^{\bdd}$. Let $\zeta \in H^{\bdd}$. Note that $m^*(\zeta) \in (H \rt{\CM} H)^{\bdd}$. Then Proposition 9.62 in \cite{ek} implies that $m^*(\zeta) = \underset{k=1}{\overset{n}{\sum}} \zeta^{1}_k \rt{\CM} \zeta^{2}_k$, where $\zeta_k^1$, $\zeta^{2}_k \in H^{\bdd}$. Since $(m^* \iota, m^*\iota)$ is a solution of the conjugate equations for $H$ and $H$, by Lemma 8.12 in \cite{lr} there is an invertible $\CM$-$\CM$-bimodule map $f: \overline{H} \to H$ such that
\begin{align}\label{equ:can_solution_to_Fro_sol}
    (f \rt{\CM} 1_H)\gamma = m^*\iota = (1_H \rt{\CM} (f^{-1})^{*})\overline{\gamma}
\end{align}
where $\gamma$ and $\overline{\gamma}$ are defined by (\ref{equ:r_def}). Then we have
\begin{align*}
    \braket{L_\xi(\beta)}{\zeta} &= \braket{m(\xi \rt{\CM} \beta)}{\zeta} 
    =\sum_k \braket{\xi \rt{\CM} \beta}{\zeta^{1}_k \rt{\CM} \zeta^2_k}
    =\sum_k \braket{\beta}{\braket{\xi}{\zeta^{1}_k}_{\CM} \zeta^2_k}\\
    &=\sum_k \braket{\beta}{(\iota^*m \rt{\CM} 1_H)
    [(f^{-1})^{*}(\overline{\xi})\rt{\CM} \zeta^{1}_k \rt{\CM} \zeta^2_k]}
    =\braket{\beta}{m[(f^*)^{-1}(\overline{\xi}) \rt{\CM} \zeta]}
\end{align*}
where we use the fact $\gamma = (f^{-1} \rt{\CM} 1_H)m^* \iota$ in the fourth equality and the Frobenius property in the last one. Therefore, $L^{*}_{\xi} = L_{(f^*)^{-1}(\overline{\xi})} \in \CN$, and $\CN$ is a von Neumann algebra. 

Finally, note that $L_{\iota(a)}(\xi) = a\xi$ for every $a \in \CM$, $\xi\in H$, thus $a \to L_{\iota(a)}$ is a $*$-homomorphism, and the proof is complete.
\end{proof}

In order to show that the von Neumann algebra $\CN$ defined in Lemma \ref{lem:N_SOT_closed} is a finite direct sum of II$_1$ factors, we first need to characterize the center of $\CN$. Recall that $H$ is an $H$-$H$-bimodule in $\vN(\CM, \CM)$. If $\CM$ is a factor, then it is known that $\CN \cap \CN' = \BMod_\vN(H,H)(H,H)$, the space of $H$-$H$-bimodule intertwiners between $H$ and itself, see Lemma 3.17 in \cite{bklr}. It is straightforward to extend this result to the case when $\CM$ is not a factor and we briefly unravel the proof for the reader's convenience.  

\begin{lem}\label{lem:commutant_desc}
Let $t: H \to H$ be a bounded operator. Then $t \in \{L_{\iota(a)}: a \in \CM\}' \cap \CN$ if and only if $t$ is an $\CM$-$\CM$-bimodule intertwiner and $tm = m(t \rt{\CM} 1_H)$. Furthermore, $t \in \CN' \cap \CN$ if and only if $t$ is an $\CM$-$\CM$-bimodule intertwiner and $t m = m(t \rt{\CM} 1_H)=m(1_H \rt{\CM} t)$.
\end{lem}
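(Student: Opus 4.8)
The statement is the analogue for a non-factorial $\CM$ of Lemma 3.17 in \cite{bklr}, and the plan is to reproduce that proof using the description of $\CN$ from Lemma \ref{lem:N_SOT_closed}. I will use repeatedly that: (i) every element of $\CN$ is $L_\xi$ for a unique $\xi \in H^{\bdd}$, recovered from the operator by $\xi = L_\xi(\iota(I))$, where $I$ is the unit of $\CM$; (ii) $L_{\xi_1}L_{\xi_2} = L_{m(\xi_1 \rt{\CM}\xi_2)}$ and $L_{\iota(I)} = 1_H$; (iii) each $L_\xi$ commutes with the right $\CM$-action by \eqref{equ:N_comm_right_act}, while $L_{\iota(a)}$ is left multiplication by $a \in \CM$; and (iv) the elementary tensors $\xi \rt{\CM}\beta$ with $\xi,\beta \in H^{\bdd}$ are total in $H \rt{\CM}H$ by the very construction of the relative tensor product, so an $\CM$-bimodular map out of $H \rt{\CM}H$ is determined by its values on them.

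First equivalence, two implications. If $t \in \{L_{\iota(a)}:a\in\CM\}' \cap \CN$, write $t = L_\eta$ with $\eta = t(\iota(I))$; it commutes with the left $\CM$-action (by commuting with each $L_{\iota(a)}$) and with the right $\CM$-action (by (iii)), so it is an $\CM$-$\CM$-bimodule intertwiner, and $tm = m(t \rt{\CM}1_H)$ follows on elementary tensors from $t\,m(\xi \rt{\CM}\beta) = L_\eta L_\xi(\beta) = L_{m(\eta \rt{\CM}\xi)}(\beta) = m((t\xi) \rt{\CM}\beta)$ together with (iv). Conversely, given an $\CM$-$\CM$-bimodule intertwiner $t$ with $tm = m(t \rt{\CM}1_H)$, put $\xi := t(\iota(I))$: right $\CM$-linearity gives $\xi a = t(\iota(a))$, hence $\|\xi a\| \leq \|t\iota\|\,\|a\|_2$ and $\xi \in H^{\bdd}$, and the unit axiom $m(\iota \rt{\CM}1_H) = 1_H$ gives $t\beta = t\,m(\iota(I) \rt{\CM}\beta) = m(t \rt{\CM}1_H)(\iota(I) \rt{\CM}\beta) = m(\xi \rt{\CM}\beta) = L_\xi(\beta)$ for all $\beta \in H^{\bdd}$, so $t = L_\xi \in \CN$, and $t \in \{L_{\iota(a)}\}'$ since it is left $\CM$-linear.

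Second equivalence. Since $L_{\iota(a)} \in \CN$, one has $\CN' \cap \CN \subseteq \{L_{\iota(a)}:a\in\CM\}' \cap \CN$, so by the first part every $t$ under consideration is automatically an $\CM$-$\CM$-bimodule intertwiner of the form $L_\xi$ satisfying $tm = m(t \rt{\CM}1_H)$; only the equivalence of ``$t \in \CN'$'' with the extra identity $tm = m(1_H \rt{\CM}t)$ remains. If $L_\xi \in \CN'$, then associativity of $m$ yields, on elementary tensors, $m(1_H \rt{\CM}L_\xi)(\zeta \rt{\CM}\beta) = m(m(\zeta \rt{\CM}\xi) \rt{\CM}\beta) = L_\zeta L_\xi(\beta) = L_\xi L_\zeta(\beta) = L_\xi\,m(\zeta \rt{\CM}\beta)$, i.e.\ $tm = m(1_H \rt{\CM}t)$. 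Conversely, if $t = L_\xi$ satisfies $tm = m(1_H \rt{\CM}t)$, then for all $\zeta,\beta \in H^{\bdd}$ one reads off $L_\xi L_\zeta(\beta) = L_\xi\,m(\zeta \rt{\CM}\beta) = m(\zeta \rt{\CM}L_\xi\beta) = L_\zeta L_\xi(\beta)$, so $L_\xi$ commutes with every generator $L_\zeta$ of $\CN$, i.e.\ $t \in \CN'$.

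I do not anticipate a real obstacle: everything is formal once Lemma \ref{lem:N_SOT_closed} is available, and the non-factoriality of $\CM$ intervenes only through the (already exploited) fact that $H$ is a finite-type Hilbert bimodule with finite left and right bases, which makes $H^{\bdd}$ large enough for the density statement (iv) and for the operators $L_\xi$ to be everywhere defined and bounded. The single point worth a line of care is, in the two converse implications, checking that $t(\iota(I))$ is a bounded vector, which is immediate from right $\CM$-linearity of $t$ and the estimate $\|\iota(a)\| \leq \|\iota\|\,\|a\|_2$ coming from $\iota$ being a bounded intertwiner.
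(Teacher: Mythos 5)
Your proof is correct and follows essentially the same route as the paper's: both directions rest on writing $t=L_{\xi}$ with $\xi=t\iota(I)\in H^{\bdd}$ via the unit property, using associativity and equation \eqref{equ:N_comm_right_act} for the forward implications, and checking commutation with the generators $L_\zeta$, $\zeta\in H^{\bdd}$, on the dense set of bounded elementary tensors for the second equivalence. The paper's own argument is just a terser version of exactly these computations, so no further comparison is needed.
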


\begin{proof} 
If $t \in \CN$ and $tL_{\iota(a)} = L_{\iota(a)}t$ for every $a \in \CM$, then $t$ is an $\CM$-$\CM$-bimodule intertwiner and $tm = m(t \rt{\CM} 1_H)$ by equation \mref{equ:N_comm_right_act} and by associativity $m(m \rt{\CM} 1_H) = m(1_H \rt{\CM} m)$. Conversely, $t\beta = tm(\iota(I) \rt{\CM} \beta) = m(\xi \rt{\CM} \beta) = L_\xi(\beta)$ for every $\beta \in H^{\bdd}$, where $\xi = t\iota(I) \in H^{\bdd}$. Note that $L_\xi L_\beta(\zeta)=L_\xi m (\beta \rt{\CM} \zeta)$ and $L_\beta L_\xi (\zeta)= m(\beta \rt{\CM} L_{\xi}(\zeta))$. Thus $\CN \cap \CN' = \BMod_\vN(H,H)(H,H)$. 
\end{proof}

\begin{thm}\label{thm:CN_is_finite_sum_II_factors}
The von Neumann algebra $\CN$ defined in Lemma \ref{lem:N_SOT_closed} is a finite direct sum of II$_1$ factors.
\end{thm}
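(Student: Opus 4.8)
The plan is to show that $\CN$ is finite (tracial) and has finite-dimensional center; combined with the fact that $\CN$ acts on a separable Hilbert space with a cyclic-type structure, this forces $\CN$ to be a finite direct sum of II$_1$ factors (the absence of type I summands should come from the assumption that the II$_1$ factors $\CM_i$ embed suitably, or else be handled directly). First I would construct a normal faithful tracial state on $\CN$. The natural candidate is $\tau_\CN(L_\xi) \dfeq \braket{\iota(I)}{L_\xi \iota(I)}_H = \braket{\iota(I)}{\xi}_H$, using that $\iota(I) \in H^{\bdd}$ and $L_\xi \iota(I) = \xi$. Positivity and normality are straightforward; the trace property $\tau_\CN(L_\xi L_\zeta) = \tau_\CN(L_\zeta L_\xi)$ should follow from the C$^*$-Frobenius (in particular the symmetry in the Frobenius relation $m^*m = (1_H \otimes m)(m^* \otimes 1_H) = (m \otimes 1_H)(1_H \otimes m^*)$) together with the identity $L_\xi^* = L_{(f^*)^{-1}(\overline{\xi})}$ from Lemma \ref{lem:N_SOT_closed} and the inner-product formulas for $\gamma, \overline{\gamma}$; concretely one rewrites both sides as $\iota^* m (\iota^* m \otimes 1_H)(\cdots)$ and uses associativity and the Frobenius property to match them. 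Faithfulness of $\tau_\CN$ follows because $\iota(I)$ is separating for $\CN$: if $L_\xi \iota(I) = \xi = 0$ then $L_\xi \beta = m(\xi \otimes \beta) = 0$ for all $\beta \in H^{\bdd}$, hence $L_\xi = 0$. So $\CN$ is a finite von Neumann algebra.

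Next I would identify the center. By Lemma \ref{lem:commutant_desc}, $\CN' \cap \CN = \BMod_\vN(H,H)(H,H)$, the algebra of $H$-$H$-bimodule endomorphisms of $H$ in $\vN(\CM,\CM)$. Since $\vN$ is a rigid C$^*$-bicategory with finite-dimensional centers, $\sBMod_\vN$ (more precisely $\BMod_\vN$) has the same property — this was established in Section \ref{sec:specialbimods} — and after replacing $(H,m,\iota)$ by an isomorphic \emph{special} C$^*$-Frobenius algebra via Proposition \ref{lem:lr_module_map_induce_iso} (which does not change $\CN$ up to isomorphism, nor the relevant bimodule endomorphism spaces), the endomorphism space $\sBMod_\vN(H,H)(H,H)$ of the \emph{unit} bimodule $H$ in the multitensor C$^*$-category $\sBMod_\vN(H,H)$ is exactly its center, hence finite-dimensional. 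Therefore $Z(\CN) = \CN' \cap \CN$ is finite-dimensional, so $\CN = \oplus_k \CN_k$ with each $\CN_k$ a finite factor.

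It remains to rule out type I$_n$ summands, i.e.\ to show each $\CN_k$ is II$_1$. Here I would use that each $\CN_k$ contains a copy of a corner of $\CM$: the map $a \mapsto L_{\iota(a)}$ embeds $\CM$ (faithfully, since the left action of $\CM$ on $H$ is faithful — $H$ is non-zero and in a rigid setting every non-zero bimodule has faithful actions, or one argues via $\iota$ being an isometry onto a sub-bimodule) into $\CN$ commuting with the right $\CM$-action, and compressing by a minimal central projection $z_k$ of $\CN$ gives a (possibly non-unital, but non-zero for suitable $k$) homomorphism from some summand $\CM_i$ into $\CN_k$. A finite factor containing a (corner of a) II$_1$ factor cannot be finite-dimensional. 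More carefully: since $H = \oplus_{ji} H_{ji}$ and $H \neq 0$, pick $(j,i)$ with $H_{ji} \neq 0$; then $L_{\iota(p_i)} \neq 0$ and the II$_1$ factor $\CM_i$ maps into $\CN$ via $a \mapsto L_{\iota(a p_i)}$, whose image lands in some reduced algebra $\CN z_k$; composing with compression to $\CN_k = \CN z_k$ yields a unital embedding of $\CM_i$ (it is injective because $\CM_i$ is simple and the map is non-zero), forcing $\CN_k$ to be infinite-dimensional, hence II$_1$. Running this over all non-zero corners $H_{ji}$ shows every summand $\CN_k$ that is "hit" is II$_1$; a summand not hit would act as $0$ on all of $H$, contradicting faithfulness of $\CN$ acting on $H$ together with $Z(\CN)$ being spanned by bimodule endomorphisms of $H$. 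Hence $\CN$ is a finite direct sum of II$_1$ factors.

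The main obstacle I anticipate is the verification that $\tau_\CN$ is genuinely a trace: this is the one place where the full strength of the C$^*$-Frobenius (Frobenius + specialness) structure must be used, and it requires a careful graphical/algebraic manipulation of $m$, $m^*$, $\iota$ together with the explicit form of $L_\xi^*$ in terms of the bimodule isomorphism $f$ of \eqref{equ:can_solution_to_Fro_sol}. Everything else (normality, faithfulness, finite-dimensionality of the center via categorical input, exclusion of type I via embedded II$_1$ factors) is comparatively routine given the results already assembled in the paper.
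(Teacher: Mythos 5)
Your first and central step --- defining $\tau_\CN(L_\xi) \dfeq \braket{\iota(I)}{L_\xi\,\iota(I)}$ and claiming the trace property ``should follow'' from the C$^*$-Frobenius structure --- contains a genuine gap: for a general (non-standard) C$^*$-Frobenius algebra this vector state is \emph{not} tracial. This is exactly what Lemma \ref{lem:trace_K_def} of the paper quantifies: the vector state of $\iota(I)$ equals $\tau_\CN(k^2\,\cdot)$ for an invertible positive $k$ lying only in the relative commutant $\{L_{\iota(a)}:a\in\CM\}'\cap\CN$, which is in general strictly larger than the center, so $k^2$ need not be central and the vector state need not be a trace. Concretely, C$^*$-Frobenius algebras in $\vN(\CM,\CM)$ correspond to finite-index extensions $\CM\subset\CN$ equipped with a conditional expectation $E$, and the vector state of $\iota(I)$ is $\tau_\CM\circ E$; taking $\CN=\CM\otimes M_2(\C)$ with $E(x\otimes e_{ij})=\lambda_i\delta_{ij}x$, $\lambda_1\neq\lambda_2$, gives a C$^*$-Frobenius algebra for which your functional fails traciality. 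The theorem is stated for an arbitrary non-zero C$^*$-Frobenius algebra (standardness/specialness is not assumed at this point), so the manipulation with $m$, $m^*$, $\iota$ and \eqref{equ:can_solution_to_Fro_sol} that you hope will close this step cannot succeed in the stated generality. Finiteness of $\CN$ should instead be obtained as in the paper, essentially for free: $\CN$ is contained in the commutant of the right $\CM$-action on $H$, which is a finite von Neumann algebra because $H$ is a \emph{finite} bimodule; no explicit trace formula is needed.

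The remaining two steps of your plan are sound and agree with (indeed slightly expand on) the paper: the identification $Z(\CN)=\BMod_\vN(H,H)(H,H)$ via Lemma \ref{lem:commutant_desc} gives finite-dimensionality of the center (this already follows from $\BMod_\vN(H,H)(H,H)\subset\vN(\CM,\CM)(H,H)$, which is finite-dimensional by rigidity and finite-dimensional centers of $\vN$, without passing to a special representative), and your exclusion of type I summands via the non-zero normal homomorphisms $a\in\CM_i\mapsto L_{\iota(a p_i)}z_k$ of II$_1$ factors into the corners $\CN z_k$ is a correct way to supply a detail the paper leaves implicit (note only that the resulting copy of $\CM_i$ need not be unital in $\CN z_k$, which is harmless since infinite-dimensionality is all that is needed).
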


\begin{proof}
Since $H$ is a finite $\CM$-$\CM$-bimodule, the algebra $\CA$ of operators commuting with the right $\CM$-action on $H$ is a finite von Neumann algebra. Note that $\CN \subset \CA$. Then $\CN$ is a finite von Neumann algebra. By Lemma \ref{lem:commutant_desc}, the center of $\CN$ equals $\BMod_\vN(H,H)(H,H)$, thus it is finite-dimensional and $\CN$ is a finite direct sum of II$_1$ factors. 
\end{proof}

\begin{lem}\label{lem:trace_K_def}
There is an invertible positive operator $k$ in $\{L_{\iota(a)}: a \in \CM\}' \cap \CN$ such that 
\begin{align*}
  \tau_\CN(k^2 b) = \braket{\iota(I)}{b \iota(I)}, \quad \forall b \in \CN
\end{align*}
where $I$ is the identity of $\CM$.
\end{lem}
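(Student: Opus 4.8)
The plan is to produce the operator $k$ as the square root of a suitable ``density'' comparing the vector state $\braket{\iota(I)}{\,\cdot\,\iota(I)}$ with the canonical trace $\tau_\CN$ on $\CN$. First I would recall that $\CN = \{L_\xi : \xi \in H^{\bdd}\}$ is a finite von Neumann algebra (Theorem \ref{thm:CN_is_finite_sum_II_factors}) with faithful normal trace $\tau_\CN$, and that the functional $b \mapsto \braket{\iota(I)}{b\,\iota(I)}$ is a normal positive functional on $\CN$ (normality because the vector functional is $\sigma$-weakly continuous and, restricted to $\CN$, still normal; positivity is clear). Both $\tau_\CN$ and this vector functional are tracial on $\CN$ — the latter because $\iota(I)$ is, up to the isomorphism $\xi \leftrightarrow L_\xi$, a cyclic and separating-like vector implementing the trace coming from the special C$^*$-Frobenius structure; concretely $\braket{\iota(I)}{L_\xi L_\eta \,\iota(I)} = \braket{\iota(I)}{m(m \rt{\CM}1_H)(\xi\rt{\CM}\eta\rt{\CM}\iota(I))}$, and using the Frobenius property together with $(m^*\iota,m^*\iota)$ being a (standard, after rescaling) solution of the conjugate equations one checks this is symmetric in $\xi,\eta$.

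Granting that both functionals are normal positive traces on the finite algebra $\CN$, by the noncommutative Radon-Nikodym theorem for traces (see e.g.\ the Sakai/Pedersen-Takesaki-type result used already in \eqref{equ:l_r_inner_prod_def}) there is a unique positive element $h$ affiliated to the center $Z(\CN)$ — but I actually want more: since the comparison functional is $b\mapsto \tau_\CN(h\,b)$ for a positive $h \in Z(\CN)$, I would then set $k \dfeq h^{1/2}$. To see $k$ lands in the smaller algebra $\{L_{\iota(a)}:a\in\CM\}' \cap \CN$, note $Z(\CN) \subseteq \{L_{\iota(a)}\}' \cap \CN$ trivially since $L_{\iota(a)} \in \CN$; so this inclusion is automatic. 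Invertibility of $k$ is equivalent to faithfulness of $b\mapsto\braket{\iota(I)}{b\,\iota(I)}$ on $\CN$, which holds because $\iota$ is an isometry (so $\iota(I)\neq 0$) and, by the C$^*$-Frobenius relations, $\iota(I)$ is cyclic for $\CN$ — indeed $L_\xi \iota(I) = \xi$ for all $\xi\in H^{\bdd}$ and $H^{\bdd}$ is dense in $H$ — hence the vector functional is faithful on $\CN$ (a cyclic vector for a finite von Neumann algebra in standard-type position is automatically separating). This gives $h$ invertible, hence $k$ invertible and positive.

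Alternatively, and perhaps more in the spirit of the earlier computations, I would avoid citing an abstract Radon-Nikodym theorem and instead construct $k$ directly: identify $\CN \cong H^{\bdd}$ (as vector spaces via $L_\xi \leftrightarrow \xi$), transport the two traces, and use the two inner products ${}_\CM\braket{\cdot}{\cdot}$ and $\braket{\cdot}{\cdot}_\CM$ on $H^{\bdd}$ from \eqref{equ:l_r_inner_prod_def}; the operator $k^2$ is then exactly the operator implementing the change between the left and right $\CM$-valued inner products restricted to the self-dual-type structure, i.e.\ the ``Jones index / dimension'' operator, which is positive and invertible precisely because $H$ is a \emph{finite} $\CM$-$\CM$-bimodule with finite-dimensional center (this is where the standing finite-dimensionality hypothesis is used). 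Its bimodularity over $\CM$, i.e.\ $k \in \{L_{\iota(a)}:a\in\CM\}'$, follows since both inner products are $\CM$-sesquilinear.

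The main obstacle I expect is verifying cleanly that the vector functional $b\mapsto\braket{\iota(I)}{b\,\iota(I)}$ is genuinely \emph{tracial} on all of $\CN$ — this requires a careful diagrammatic manipulation with the multiplication $m$, its adjoint $m^*$, the Frobenius relation $(m\rt{\CM}1_H)(1_H\rt{\CM}m^*) = m^*m = (1_H\rt{\CM}m)(m^*\rt{\CM}1_H)$, and the identity $L_\xi\iota(I)=\xi$, to move a factor from one side to the other; once traciality is in hand, everything else (existence of $k$ via Radon-Nikodym, positivity, invertibility via faithfulness, $\CM$-centrality) is essentially formal. A secondary subtlety is that $\CN$ need not act with $\iota(I)$ as a trace vector in the usual $L^2$ sense unless one first passes to the standard form; but since $\CN$ is finite one can always normalize, and the statement as phrased only asks for the \emph{existence} of such a $k$, so no normalization of $\tau_\CN$ beyond ``the'' canonical trace is needed.
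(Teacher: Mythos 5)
Your first route breaks down at the step you yourself flagged: the vector functional $b\mapsto\braket{\iota(I)}{b\,\iota(I)}$ is \emph{not} tracial on $\CN$ in general, and no amount of Frobenius gymnastics will make it so. In this section $(H,m,\iota)$ is only assumed to be a C$^*$-Frobenius algebra (not standard), and in the subfactor picture this functional is essentially $\tau_\CM\circ E$ for the conditional expectation $E:\CN\to\CM$ encoded by the algebra; it is a trace precisely when $E$ is trace-preserving, i.e.\ in the ``standard'' case, which is not assumed (special $\neq$ standard, and here not even specialness is assumed). This is exactly why the lemma claims only $k\in\{L_{\iota(a)}:a\in\CM\}'\cap\CN$ and not $k\in Z(\CN)$: the Radon--Nikodym density of a non-tracial functional against $\tau_\CN$ is not central. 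The paper's argument avoids traciality entirely: since $H$ is a finite bimodule, $\iota(I)$ is an $\CN$-bounded vector, so Sakai--Radon--Nikodym gives a positive $k\in\CN$ with $\tau_\CN(k^2b)=\braket{\iota(I)}{b\,\iota(I)}$; then one proves commutation with $L_{\iota(a)}$ directly from the chain $\tau_\CN(k^2L_{\iota(a)}b)=\braket{\iota(I)}{ab\,\iota(I)}=\braket{\iota(I)}{b\,\iota(I)a}=\tau_\CN(L_{\iota(a)}k^2b)$, which uses only that $\iota(I)$ is $\CM$-central ($a\iota(I)=\iota(I)a=\iota(a)$) and that $\tau_\CN$ is a trace. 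Your second, ``alternative'' construction also targets the wrong object: the operator relating the two $\CM$-valued inner products on $H^{\bdd}$ is not the density comparing the vector state with $\tau_\CN$ on $\CN$ (indeed both $\CM$-valued inner products reproduce the same scalar inner product after applying $\tau_\CM$, by \eqref{equ:l_r_inner_prod_def}).

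There is a second, independent gap in the invertibility step. Faithfulness of $b\mapsto\braket{\iota(I)}{b\,\iota(I)}$ only gives that $k$ has full support, which in an infinite-dimensional von Neumann algebra does not imply $k$ is bounded below; so ``invertibility is equivalent to faithfulness'' is false as stated. The missing ingredient is that $k$ lies in $\{L_{\iota(a)}:a\in\CM\}'\cap\CN$, which by Lemma \ref{lem:commutant_desc} and rigidity is \emph{finite-dimensional}; there, full support does imply invertibility. (Also, your parenthetical principle ``a cyclic vector for a finite von Neumann algebra is automatically separating'' is false in general, e.g.\ $M_2(\C)$ acting on $\C^2$; but this is easily repaired, since $L_\xi\iota(I)=\xi$ shows directly that $\iota(I)$ is separating for $\CN$, hence the functional is faithful.) With the traciality claim removed, the centrality claim replaced by the direct commutation computation, and invertibility obtained via finite-dimensionality of the relative commutant, the argument matches the paper's.
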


\begin{proof}
Since $H$ is a finite $\CM$-$\CM$-bimodule, $\iota(I)$ is an $\CN$-bounded vector, see the proof of Proposition 9.57 in \cite{ek}. Then the Sakai-Randon-Nikdo\'{y}m theorem implies that there is a positive operator $k$ in $\CN$ satisfying $\tau_\CN(k^2 b) = \braket{\iota(I)}{b \iota(I)}$ for every $b\in\CN$. Note that $\tau_\CN(k^2L_{\iota(a)}b) = \braket{\iota(I)}{ab \iota(I)} = \braket{\iota(I)  a^*}{b \iota(I)} = \braket{\iota(I)}{b \iota(a)} = \tau_\CN(L_{\iota(a)}k^2 b)$, for every $a \in \CM$, $b \in \CN$, thus $k L_{\iota(a)} = L_{\iota(a)} k$. Since $L_\xi(\iota(I)) = \xi$, the vector $\iota(I)$ is separating for $\CN$. By Lemma \ref{lem:commutant_desc}, $\{L_{\iota(a)}: a \in \CM\}' \cap \CN$ is finite-dimensional, therefore $k$ is invertible.
\end{proof}

Let $\CN$ be the von Neumann algebra defined in Lemma \ref{lem:N_SOT_closed}. In the following, we use $X$ to denote the $\CN$-$\CM$-bimodule $L^2(\CN, \tau_\CN)$ with the canonical $\CN$-$\CM$-bimodule structure. More explicitly, if we view $\CN$ as a subspace of $L^2(\CN, \tau_\CN)$, the left $\CN$-action and the right $\CM$-action are given by
\begin{align}\label{equ:def_X_actions}
  b_1 \cdot b_2 \cdot a : = b_1 b_2 L_{\iota(a)}, \quad \forall b_1, b_2 \in \CN, a \in \CM. 
\end{align}
Similarly, let $\overline{X}$ be the $\CM$-$\CN$-bimodule $L^2(\CN, \tau_\CN)$ with left $\CM$-action and right $\CN$-action given by 
\begin{align}
  a \cdot b_2 \cdot b_1 : = L_{\iota(a)} b_2 b_1, \quad \forall b_1, b_2 \in \CN, a \in \CM. 
\end{align}
Note that $X^{\bdd} = \CN$, see Example 9.6 and 9.10 in \cite{ek}. Let $k$ be the invertible operator defined in Lemma \ref{lem:trace_K_def}. Since for every $\xi$, $\beta \in H^{\bdd}$ we have
\begin{align}\label{equ:u_def_con}
  \braket{\xi}{\beta} = \braket{\iota(I)}{L_{\xi}^*L_{\beta} \iota(I)} = \tau_{\CN}((L_{\xi}k)^*(L_{\beta}k))
\end{align}
the map $\xi \to L_\xi k$ extends to a unitary from the finite $\CN$-$\CM$-bimodule $H$ to the finite $\CN$-$\CM$-bimodule $X$ intertwining the left $\CN$ and right $\CM$-actions. 

Let $\gamma: L^2(\CM, \tau_\CM) \to \overline{X} \rt{\CN} X$ be the operator defined by 
\begin{align*}
\gamma(a) \dfeq k \rt{\CN} L_{\iota(a)} = L_{\iota(a)} \rt{\CN} k, \quad \forall a \in \CM.
\end{align*} 

Let $\overline{\gamma}: L^2(\CN, \tau_\CN) \to X \rt{\CM} \overline{X}$ be the operator whose adjoint is defined by 
\begin{align*}
\overline{\gamma}^{\, *}(b_1 \rt{\CM} b_2) := b_1 k^{-1} b_2, \quad \forall b_1, b_2 \in \CN.
\end{align*} 

Then $(\gamma, \overline{\gamma})$ is a solution of the conjugate equations \mref{equ:conjeq} for $X$ and $\overline{X}$. Therefore, $\overline{X}$ is a dual of $X$ and $(\overline{X}\rt{\CN} X, 1_{\overline{X}}\, \rt{\CN} \overline{\gamma}^{\, *} \rt{\CN} 1_{X}, \gamma)$ is a C$^*$-Frobenius algebra.

\begin{lem}\label{lem:u_def}
Let $X \in \vN(\CM,\CN)$ and $\overline{X} \in \vN(\CN,\CM)$ be the bimodules defined above.
Then the two C$^*$-Frobenius algebras $(H, m, \iota)$ and $(\overline{X}\rt{\CN} X, 1_{\overline{X}}\, \rt{\CN} \overline{\gamma}^{\, *} \rt{\CN} 1_{X}, \gamma)$ are unitarily isomorphic, namely there is a unitary $u$ in $\vN(\CM, \CM)(H, \overline{X}\rt{\CN} X)$ which is an algebra isomorphism. Furthermore, $H$ is special if and only if $\overline{\gamma}^{\,*}\overline{\gamma} = 1_{L^2(\CN, \tau_\CN)}$, i.e., if and only if $\overline{X}\rt{\CN} X$ is special.
\end{lem}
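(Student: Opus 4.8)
The plan is to construct the unitary $u \colon H \to \overline{X}\rt{\CN} X$ explicitly from the identification $\xi \mapsto L_\xi k$ of $H$ with $X = L^2(\CN,\tau_\CN)$ established before the lemma, and then to verify separately (i) that $u$ intertwines the $\CM$-$\CM$-bimodule structures, (ii) that $u$ intertwines the algebra multiplications and units, and (iii) the specialness equivalence. For (i), recall that $\xi \mapsto L_\xi k$ is already a unitary $\CM$-$\CM$-bimodule intertwiner $H \to X$; composing with the canonical isometric inclusion of $X$ into $\overline{X}\rt{\CN} X$ sending $\eta \mapsto k \rt{\CN} \eta$ (which realizes $X$ as $\overline{X}\rt{\CN} X$ because $\overline{X}^{\bdd}\odot X^{\bdd} = \CN\odot\CN$ collapses via $a\rt{\CN} b \mapsto ab$ up to the $k^{-1}$ twist coming from $\gamma$; more precisely $\overline{X}\rt{\CN} X$ has bounded vectors spanned by $k\rt{\CN} b$, $b\in\CN$). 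So I would define $u(\xi) \dfeq k \rt{\CN} (L_\xi k)$ for $\xi \in H^{\bdd}$ and check it extends to a unitary, using the inner product formula \eqref{equ:u_def_con} together with the defining relation $\overline{\gamma}^{\,*}(b_1\rt{\CM} b_2) = b_1 k^{-1}b_2$ to compute $\langle k\rt{\CN}(L_\xi k) \mid k\rt{\CN}(L_\beta k)\rangle$ and match it with $\langle\xi\mid\beta\rangle$.

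For (ii), I would unwind the multiplication $1_{\overline{X}}\rt{\CN}\overline{\gamma}^{\,*}\rt{\CN} 1_X$ on bounded vectors: on $(k\rt{\CN} b_1)\rt{\CN}(k\rt{\CN} b_2)$ (identifying $(\overline{X}\rt{\CN} X)\rt{\CM}(\overline{X}\rt{\CN} X)$ appropriately) it sends this to $k\rt{\CN}(\overline{\gamma}^{\,*}(b_1 \rt{\CM} k))\cdot b_2$-type expressions, which by $\overline{\gamma}^{\,*}(b_1\rt{\CM} b_2) = b_1 k^{-1}b_2$ reduces to something proportional to $k\rt{\CN}(b_1 k^{-1} k b_2) = k\rt{\CN}(b_1 b_2)$. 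On the other side, $u(m(\xi\rt{\CM}\beta))$ should give $k\rt{\CN}(L_{m(\xi\rt{\CM}\beta)}k) = k\rt{\CN}(L_\xi L_\beta k)$ using $L_\xi L_\beta = L_{m(\xi\rt{\CM}\beta)}$ from Lemma \ref{lem:N_SOT_closed}; tracking the correspondence $\xi\leftrightarrow L_\xi k$, i.e. $b_i = L_{\xi_i}k$ with $L_{b_i}$ the corresponding element, I would match $L_\xi L_\beta k$ against $b_1 b_2$-style products, being careful about where $k$ and $k^{-1}$ sit (this is exactly where $\overline{\gamma}^{\,*}$ carries the $k^{-1}$). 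The unit is easier: $u(\iota(I)) = k\rt{\CN}(L_{\iota(I)}k) = k\rt{\CN} k = \gamma(I)$ by the definition of $\gamma$, so $u\iota = \gamma = $ the unit of $\overline{X}\rt{\CN} X$. For (iii), note $u$ being an algebra isomorphism means $mm^* = 1_H$ iff the corresponding multiplication on $\overline{X}\rt{\CN} X$ satisfies the analogous identity, and by the definition of specialness in Definition \ref{def:standardA} the latter says precisely $(1_{\overline{X}}\rt{\CN}\overline{\gamma}^{\,*}\rt{\CN} 1_X)(1_{\overline{X}}\rt{\CN}\overline{\gamma}\rt{\CN} 1_X) = 1$, which after cancelling the outer isometries is $\overline{\gamma}^{\,*}\overline{\gamma} = 1_{L^2(\CN,\tau_\CN)}$.

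The main obstacle I anticipate is bookkeeping in step (ii): correctly identifying the fourfold relative tensor product $(\overline{X}\rt{\CN} X)\rt{\CM}(\overline{X}\rt{\CN} X)$, tracking which copy of $L^2(\CN,\tau_\CN)$ each vector lives in, and getting the placement of the factors of $k$ and $k^{-1}$ right so that the multiplication on the bimodule side matches $L_\xi L_\beta$ on the algebra side. The relation $\gamma(a) = k\rt{\CN} L_{\iota(a)} = L_{\iota(a)}\rt{\CN} k$ and the twisted formula for $\overline{\gamma}^{\,*}$ are designed precisely so that the Frobenius data on $\overline{X}\rt{\CN} X$ transports the operator multiplication on $\CN = H^{\bdd}$; making this precise is a routine but delicate computation with relative tensor products, of the same flavor as Lemma \ref{lem:N_SOT_closed} and its proof. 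Once the unitary intertwines multiplications and units, the $\CM$-$\CM$-bimodularity in (i) combined with $u\iota = \gamma$ shows $u$ is an algebra homomorphism in $\vN(\CM,\CM)$, hence an isomorphism of C$^*$-Frobenius algebras, and (iii) follows formally.
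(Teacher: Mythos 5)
Your overall strategy (transport the multiplication of $\CN$ through the identification $\xi\mapsto L_\xi k$ and compare with the Frobenius structure $(\overline{X}\rt{\CN}X,\,1_{\overline{X}}\rt{\CN}\overline{\gamma}^{\,*}\rt{\CN}1_X,\,\gamma)$) is the right one, but the concrete map you propose is wrong: the assignment $u(\xi)\dfeq k\rt{\CN}(L_\xi k)$ is not a unitary and does not intertwine the units. Under the canonical identification $\overline{X}\rt{\CN}X\cong L^2(\CN,\tau_\CN)$, $b_1\rt{\CN}b_2\mapsto b_1b_2$ (which is isometric, as one checks from the $\CN$-valued inner products), your vector corresponds to $kL_\xi k$, so
\begin{align*}
\braket{u(\xi)}{u(\beta)}=\tau_\CN\bigl(k\,L_\xi^{*}\,k^{2}\,L_\beta\,k\bigr),
\qquad\text{whereas}\qquad
\braket{\xi}{\beta}_H=\tau_\CN\bigl(k\,L_\xi^{*}L_\beta\,k\bigr)
\end{align*}
by \eqref{equ:u_def_con}; since $k$ does not commute with general $L_\xi$ (it only commutes with $\{L_{\iota(a)}\}$), these disagree, so your ``canonical isometric inclusion'' $\eta\mapsto k\rt{\CN}\eta$ of $X$ into $\overline{X}\rt{\CN}X$ is not isometric (the isometric one is $\eta\mapsto I\rt{\CN}\eta$). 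The unit check also breaks: $\gamma(I)=k\rt{\CN}L_{\iota(I)}=k\rt{\CN}I$, not $k\rt{\CN}k$ as you wrote, so $u\iota(I)=k\rt{\CN}k\neq\gamma(I)$. Finally, multiplicativity fails for the same reason: in your parametrization the product of $(k\rt{\CN}b_1)\rt{\CM}(k\rt{\CN}b_2)$ is $k\rt{\CN}(b_1b_2)$, and with $b_i=L_{\xi}k,\;L_{\beta}k$ this is $k\rt{\CN}(L_\xi k L_\beta k)$, whereas $u(m(\xi\rt{\CM}\beta))=k\rt{\CN}(L_\xi L_\beta k)$; the extra middle $k$ does not cancel.

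The fix is simply to drop the spurious $k$ in the first tensor leg and define, as the paper does, $u(\xi)\dfeq L_\xi\rt{\CN}k\;(=I\rt{\CN}L_\xi k)$ for $\xi\in H^{\bdd}$. Then \eqref{equ:u_def_con} gives exactly $\braket{u(\xi)}{u(\beta)}=\tau_\CN\bigl((L_\xi k)^{*}L_\beta k\bigr)=\braket{\xi}{\beta}_H$, bimodularity follows from $L_{a\xi b}=L_{\iota(a)}L_\xi L_{\iota(b)}$ and $kL_{\iota(b)}=L_{\iota(b)}k$, the unit is $u\iota(a)=L_{\iota(a)}\rt{\CN}k=\gamma(a)$, and the product computation goes through because the middle factor fed to $\overline{\gamma}^{\,*}$ is $k\rt{\CM}L_\beta$, so $\overline{\gamma}^{\,*}$ contributes $kk^{-1}L_\beta=L_\beta$ and one gets $(L_\xi L_\beta)\rt{\CN}k=L_{m(\xi\rt{\CM}\beta)}\rt{\CN}k=u\,m(\xi\rt{\CM}\beta)$. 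With that correction the remainder of your outline matches the paper's argument; only note that in step (iii) the ``cancellation of the outer isometries'' deserves a word of justification, e.g.\ the paper's observation that every vector of $\overline{X}\rt{\CN}X\rt{\CM}\overline{X}\rt{\CN}X$ is of the form $L_{\iota(I)}\rt{\CN}\zeta\rt{\CN}L_{\iota(I)}$ with $\zeta\in X\rt{\CM}\overline{X}$, so that $1_{\overline{X}}\rt{\CN}(\overline{\gamma}^{\,*}\overline{\gamma})\rt{\CN}1_X=1$ forces $\overline{\gamma}^{\,*}\overline{\gamma}=1_{L^2(\CN,\tau_\CN)}$.
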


\begin{proof}
By the definition of the inner product on $\overline{X}\rt{\CN} X$ and by equation \mref{equ:u_def_con}, we know that the map $\xi \in H^{\bdd} \mapsto L_{\xi} \rt{\CN} k$, extends to a unitary $u : H \to \overline{X}\rt{\CN} X$. Since $L_{a \xi b} = L_{\iota(a)}L_{\xi}L_{\iota(b)}$ for every $a, b \in \CM$ and $L_{\iota(b)}k = kL_{\iota(b)}$, $u$ is an $\CM$-$\CM$-bimodule intertwiner. Now, note that for every $a \in \CM$, we have $u \iota(a) = L_{\iota(a)} \rt{\CN} k = \gamma(a)$ and  
\begin{align*}
  (1_{\overline{X}}\, \rt{\CN} \overline{\gamma}^{\, *} \rt{\CN} 1_{X})(u(\xi) \rt{\CM} u(\beta)) 
  &= (1_{\overline{X}}\, \rt{\CN} \overline{\gamma}^{\, *} \rt{\CN} 1_{X})(L_\xi \rt{\CN} k \rt{\CM} L_{\beta} \rt{\CN}k) 
  = (L_\xi L_\beta) \rt{\CN} k \\
  &= L_{m(\xi \rt{\CM} \beta)} \rt{\CN} k = um(\xi \rt{\CM} \beta)
\end{align*}
for every $\xi, \beta \in H^{\bdd}$. Thus $u$ is an algebra isomorphism. 

Finally, note that $\overline{X} \rt{\CN} X \rt{\CM} \overline{X} \rt{\CN} X = \{L_{\iota(I)} \rt{\CN} \zeta \rt{\CN} L_{\iota(I)}: \zeta \in X \rt{\CM} \overline{X}\}$, thus $1_{\overline{X}} \rt{\CN} (\overline{\gamma}^{\, *} \overline{\gamma})\rt{\CN} 1_{X}$ is the identity map if and only if $\overline{\gamma}^{\, *}\overline{\gamma}$ is the identity map. 
\end{proof}

\begin{thm}\label{thm:real_in_multi_tensor}
Let $\CC$ be an indecomposable multitensor C$^*$-category (with arbitrary spectrum). Then there is a finite direct sum of II$_1$ factors $\CN$ and a fully faithful tensor $*$-functor form $\CC$ to $\vN(\CN, \CN)$.
\end{thm}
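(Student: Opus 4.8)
The plan is to reduce to the realization of \emph{tensor} C$^*$-categories over a single II$_1$ factor, which is available with no restriction on the spectrum, and then amplify it using the Frobenius-algebra machinery developed in the preceding sections. First I would invoke Corollary \ref{cor:ind_ten_equ_alg_ten} to obtain a tensor C$^*$-category $\CC_0$ and a standard C$^*$-Frobenius algebra $(H,m,\iota)$ in $\CC_0$ (standard, hence special by Definition \ref{def:standardA}) together with a tensor *-equivalence $\CC\simeq\sBMod_{\CC_0}(H,H)$. Since $\CC_0$ has simple tensor unit, I would then quote the realization theorem for tensor C$^*$-categories with arbitrary spectrum (\cite{lw}; cf.\ \cite{hy,fv,bhp,fr}) to get a II$_1$ factor $\CM$ and a fully faithful tensor *-functor $\Phi\colon\CC_0\to\vN(\CM,\CM)$ (passing from endomorphisms to bimodules if the cited result is phrased that way). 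Because $\Phi$ is a tensor *-functor, $\Phi(H)$---with the structure maps obtained from $\Phi(m)$ and $\Phi(\iota)$ after inserting the (unitary) tensorator of $\Phi$---is a special C$^*$-Frobenius algebra in $\vN(\CM,\CM)$, and it is non-zero since $H$ is and $\Phi$ is faithful.

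Next I would check that $\Phi$ induces a fully faithful tensor *-functor $\sBMod_{\CC_0}(H,H)\to\sBMod_{\vN(\CM,\CM)}(\Phi(H),\Phi(H))$. A *-functor intertwining $\otimes$ with $\rt{\CM}$ sends (special) $H$-$H$-bimodules to (special) $\Phi(H)$-$\Phi(H)$-bimodules and bimodule maps to bimodule maps, and it carries the projection $p^{H}_{Y\otimes Z}$ of \eqref{equ:coequ_proj} to $p^{\Phi(H)}_{\Phi(Y)\rt{\CM}\Phi(Z)}$, hence is compatible with the relative tensor products $\rt{H}$ and $\rt{\Phi(H)}$. Faithfulness of the induced functor is inherited from $\Phi$, and fullness too, since a $\CC_0$-preimage of a $\Phi(H)$-$\Phi(H)$-bimodule map automatically satisfies the bimodule-map equations because $\Phi$ is faithful.

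The heart of the argument is the next step, where I would feed $\Phi(H)$ into Section \ref{sec:qsystemsinvN}. Applying Lemma \ref{lem:N_SOT_closed}, Theorem \ref{thm:CN_is_finite_sum_II_factors} and Lemma \ref{lem:u_def} to the non-zero C$^*$-Frobenius algebra $\Phi(H)$ in $\vN(\CM,\CM)$, I would obtain a finite direct sum of II$_1$ factors $\CN$ (an extension of $\CM$), a finite bimodule $X\in\vN(\CM,\CN)$ with a solution $(\gamma,\overline\gamma)$ of the conjugate equations, and a unitary algebra isomorphism
\[
\Phi(H)\ \cong\ \bigl(\overline X\rt{\CN}X,\ 1_{\overline X}\rt{\CN}\overline\gamma^{\,*}\rt{\CN}1_X,\ \gamma\bigr).
\]
Crucially, Lemma \ref{lem:u_def} also tells me that specialness of $\Phi(H)$ forces $\overline\gamma^{\,*}\overline\gamma=1_{L^2(\CN,\tau_\CN)}$, which is exactly the normalization hypothesis of Lemma \ref{lem:2-cat_std_Frob_bimod_equ} for the data $(X,\gamma,\overline\gamma)$. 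I would then apply Lemma \ref{lem:2-cat_std_Frob_bimod_equ} to the full sub-C$^*$-bicategory of $\vN$ on the two objects $\CM$ and $\CN$, taking $\CM$ as the fixed object and $X$ in the role of $X_\CN$; this produces an essentially surjective, fully faithful *-functor $\Theta_{\CN,\CN}\colon\vN(\CN,\CN)\to\sBMod_{\vN(\CM,\CM)}(\overline X\rt{\CN}X,\overline X\rt{\CN}X)$ which respects the tensor structure, hence a tensor *-equivalence. Composing it with the displayed isomorphism gives a tensor *-equivalence $\vN(\CN,\CN)\simeq\sBMod_{\vN(\CM,\CM)}(\Phi(H),\Phi(H))$.

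Finally I would string the three functors together,
\[
\CC\ \simeq\ \sBMod_{\CC_0}(H,H)\ \longrightarrow\ \sBMod_{\vN(\CM,\CM)}(\Phi(H),\Phi(H))\ \simeq\ \vN(\CN,\CN),
\]
the last arrow being a tensor *-quasi-inverse of $\Theta_{\CN,\CN}$, to obtain a fully faithful tensor *-functor $\CC\to\vN(\CN,\CN)$, as claimed. The main obstacle, as I see it, is the bookkeeping at the interface in the third step---verifying that specialness of $H$ is preserved under $\Phi$ and that it translates, through Lemma \ref{lem:u_def}, into precisely the normalization $\overline\gamma^{\,*}\overline\gamma=1$ required to invoke Lemma \ref{lem:2-cat_std_Frob_bimod_equ}---rather than any hard analytic estimate; the deep external ingredient, the realization of $\CC_0$ over a II$_1$ factor with no constraint on its spectrum, is quoted rather than reproved.
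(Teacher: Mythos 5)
Your proposal is correct and follows essentially the same route as the paper: Corollary \ref{cor:ind_ten_equ_alg_ten} to reduce to a standard C$^*$-Frobenius algebra $H$ in a tensor C$^*$-category $\CC_0$, realization of $\CC_0$ over a II$_1$ factor via \cite{lw}, then the extension $\CN$ from Theorem \ref{thm:CN_is_finite_sum_II_factors} together with Lemma \ref{lem:u_def} and Lemma \ref{lem:2-cat_std_Frob_bimod_equ} to identify $\sBMod_{\vN(\CM,\CM)}(H,H)$ with $\vN(\CN,\CN)$. The only difference is presentational: the paper simply takes $\CC_0$ to be a full tensor sub-C$^*$-category of $\vN(\CM,\CM)$, which makes your explicit bookkeeping with the functor $\Phi$, its tensorator, and the induced functor on bimodule categories unnecessary.
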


\begin{proof}
By Corollary \ref{cor:ind_ten_equ_alg_ten}, there is a tensor C$^*$-category $\CC_0$ and a standard C$^*$-Frobenius algebra $(H,m,\iota)$ in $\CC_0$ such that $\CC \simeq \sBMod_{\CC_0}(H,H)$. By Theorem 4.15 in \cite{lw}, we may assume that $\CC_0$ is a full tensor sub-C$^*$-category of $\vN(\CM, \CM)$ where $\CM$ is a II$_1$ factor. In particular, $H$ is a finite $\CM$-$\CM$-bimodule with a C$^*$-Frobenius algebra structure, and $\sBMod_{\CC_0}(H,H)$ is a full subcategory of $\sBMod_{\vN(\CM, \CM)}(H,H)$. 

Let $\CN$ be the von Neumann algebra from Theorem \ref{thm:CN_is_finite_sum_II_factors}. Since $H$ is standard, thus special, by Lemma \ref{lem:u_def} and Lemma \ref{lem:2-cat_std_Frob_bimod_equ} applied to $\bB = \vN$ and $\CN=\CL$, we have
\begin{align*}
    \sBMod_{\vN(\CM, \CM)}(H,H) \simeq \sBMod_{\vN(\CM, \CM)}(\overline{X} \rt{\CN} X,\overline{X} \rt{\CN} X) \simeq \vN(\CN, \CN).
\end{align*}
Thus the proof is complete.
\end{proof}

As a special case, we obtain a realization result for multifusion C$^*$-categories over hyperfinite von Neumann algebras.
The same result, obtained with different techniques, has been announced in \cite{penowr} and will appear in \cite{distortion}.
Recall that a \textbf{multifusion C$^*$-category} is a multitensor C$^*$-category with finitely many isomorphism classes of simple objects, i.e., with finite spectrum.

\begin{cor}
Every indecomposable multifusion C$^*$-category can be realized as finite bimodules over a finite direct sum of the hyperfinite II$_1$ factor.
\end{cor}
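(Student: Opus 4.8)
The plan is to deduce this from Theorem~\ref{thm:real_in_multi_tensor} by checking that, in the finite-spectrum case, every type~II$_1$ factor occurring in the construction can be replaced by the hyperfinite one, which I denote by $R$. First I would observe that if $\CC$ is an indecomposable multifusion C$^*$-category, written as $\CC = \oplus_{i,j}\CC_{ij}$, then $\CC_0 \dfeq \CC_{11}$ is a tensor C$^*$-category with finite spectrum, i.e.\ a fusion C$^*$-category: its unit $\one_1$ is simple and its simple objects sit among the finitely many simple objects of $\CC$. By Corollary~\ref{cor:ind_ten_equ_alg_ten}, $\CC \simeq \sBMod_{\CC_0}(H,H)$ for a standard C$^*$-Frobenius algebra $(H,m,\iota)$ in $\CC_0$.

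Next I would invoke the known realization of fusion C$^*$-categories as finite bimodules over the hyperfinite II$_1$ factor (e.g.\ \cite{hy}, or Theorem~4.15 in \cite{lw} together with the fact that for finite spectrum the ambient factor can be chosen hyperfinite) to regard $\CC_0$ as a full tensor sub-C$^*$-category of $\vN(R,R)$. Then $H$ becomes a finite $R$-$R$-bimodule carrying a C$^*$-Frobenius algebra structure, and I would rerun the proof of Theorem~\ref{thm:real_in_multi_tensor} verbatim with $\CM = R$: Theorem~\ref{thm:CN_is_finite_sum_II_factors} produces a finite direct sum of II$_1$ factors $\CN$, and Lemmas~\ref{lem:u_def} and~\ref{lem:2-cat_std_Frob_bimod_equ} give $\sBMod_{\vN(R,R)}(H,H) \simeq \vN(\CN,\CN)$, hence a fully faithful tensor *-functor from $\CC \simeq \sBMod_{\CC_0}(H,H)$ into $\vN(\CN,\CN)$.

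The one point that requires genuine verification — and the main obstacle — is that $\CN$ is a direct sum of \emph{hyperfinite} II$_1$ factors. Here I would use that $\CN$ sits inside the algebra $\CA$ of bounded operators on $H$ commuting with the right $R$-action; since $H$ is a finite $R$-$R$-bimodule, as a right $R$-module it is a submodule of a finite multiple of $L^2(R,\tau_R)$, so $\CA$ is isomorphic to a corner $p\,M_n(R)\,p$ of a matrix amplification of $R$. Thus $\CA$ is finite (as already used in the proof of Theorem~\ref{thm:CN_is_finite_sum_II_factors}) and injective. As $\CN$ is the range of the unique trace-preserving conditional expectation $\CA \to \CN$, injectivity descends to $\CN$, so $\CN$ is hyperfinite by Connes' characterization of injective $=$ AFD von Neumann algebras. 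Together with Theorem~\ref{thm:CN_is_finite_sum_II_factors}, each direct summand of $\CN$ is then the hyperfinite II$_1$ factor $R$, by uniqueness. Hence $\CC$ is realized as finite bimodules over the finite direct sum $\CN$ of copies of $R$. Everything else is a routine specialization of the proof of Theorem~\ref{thm:real_in_multi_tensor}, so I expect no further difficulty.
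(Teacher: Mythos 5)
Your proposal is correct and follows essentially the same route as the paper: realize the fusion C$^*$-category $\CC_0=\CC_{11}$ over the hyperfinite II$_1$ factor via \cite{hy}, rerun Theorem \ref{thm:real_in_multi_tensor}, and then note that $\CN$ embeds in the (hyperfinite, because $H\simeq L^2(\CN,\tau_\CN)$ is a finite right $R$-module) commutant of the right $R$-action, so that injectivity passes to $\CN$ by Connes. The paper phrases the last step as a direct citation of Corollary 2 in \cite{connes} rather than through the trace-preserving conditional expectation, but this is the same fact, so there is no substantive difference.
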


\begin{proof}
In the proof of Theorem \ref{thm:real_in_multi_tensor}, we may assume that $\CC_0$ is a fusion C$^*$-category. Thus $\CC_0$ can be realized as finite bimodules over the hyperfinite II$_1$ factor $\CR$ by Theorem 7.6 in \cite{hy}. Let $\CN$ be the von Neumann algebra considered in Theorem \ref{thm:CN_is_finite_sum_II_factors}. Since $L^2(\CN, \tau_\CN)$ is a finite right $\CR$-module, we know that the commutant of the right $\CR$-action is a hyperfinite II$_1$ factor containing $\CN$. Therefore $\CN$ is a finite direct sum of hyperfinite II$_1$ factors, see Corollary 2 in \cite{connes}.
\end{proof}

\begin{rem}
In \cite{distortion}, given a multifusion C$^*$-category, a representation over a hyperfinite von Neumann algebra is constructed using planar algebra/commuting square techniques by first defining an infinite tower of inclusions of finite-dimensional algebras with a coherent family of faithful traces, and then taking inductive limits. Moreover, in the case of multifusion C$^*$-categories, a complete classification result is obtained in \cite{distortion} for such representations, by means of an additional, not purely categorical, invariant which characterizes Popa's homogeneity and completes the standard invariant for finite depth finite index connected inclusions of hyperfinite II$_1$ von Neumann algebras with finite-dimensional centers.
\end{rem}

\section{Realization of rigid C$^*$-bicategories}
\subsection{Proof of Theorem \ref{thm:real_2_cat}}\label{sec:pfmainthm}\,\\ \vspace{-2mm}

This section is devoted to the proof of the main result of this note: every rigid C$^*$-bicategory with finite-dimensional centers $\bB$ can be realized as finite bimodules over finite direct sums of II$_1$ factors. Namely, there is a locally fully faithful $*$-bifunctor from $\bB$ to $\vN$.

From Notation \ref{not:vN}, $\vN$ is a rigid C$^*$-bicategory with finite-dimensional centers. From Remark \ref{rem:BinsBiModB}, $\vN$ can be viewed as a full sub-C$^*$-bicategory of the C$^*$-bicategory $\sBMod_\vN$. The embedding is given by identifying a von Neumann algebra $\CN$ with the trivial special C$^*$-Frobenius algebra $\one_\CN=L^2(\CN,\tau_\CN)$ in $\vN(\CN,\CN)$. 

The following lemma shows that Connes' bimodules in $\vN$ can be identified with the categorical special bimodules over special C$^*$-Frobenius algebras in $\vN$.

\begin{lem}\label{lem:von_bim_alg_closed}
The inclusion $\vN \to \sBMod_\vN$ is a $*$-biequivalence. 
\end{lem}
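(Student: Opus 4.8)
The plan is to show that the inclusion $\vN \to \sBMod_\vN$ is a $*$-biequivalence by checking it is locally essentially surjective and locally fully faithful (it is already fully faithful on 2-morphisms and a $*$-bifunctor, being a full sub-C$^*$-bicategory embedding via Remark \ref{rem:BinsBiModB}). Local full faithfulness is immediate from Remark \ref{rem:BinsBiModB}, since $\sBMod_\vN(\one_\CN,\one_\CL) = \vN(\CN,\CL)$ canonically. So the real content is \textbf{local essential surjectivity}: given a special C$^*$-Frobenius algebra $(H,m,\iota)$ in $\vN(\CM,\CM)$ for some $\CM = \oplus_i \CM_i$ a finite direct sum of II$_1$ factors, we must produce another finite direct sum of II$_1$ factors $\CN$ and a unitary isomorphism in $\sBMod_\vN$ between $H$ and the trivial special C$^*$-Frobenius algebra $\one_\CN = L^2(\CN,\tau_\CN)$. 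More generally we need: every special $B$-$A$-bimodule over special C$^*$-Frobenius algebras $A$, $B$ in $\vN$ is isomorphic, inside $\sBMod_\vN$, to a Connes' bimodule between the corresponding von Neumann algebras.

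First I would invoke the machinery of Section \ref{sec:qsystemsinvN} directly: given $(H,m,\iota)$ a C$^*$-Frobenius algebra in $\vN(\CM,\CM)$, Lemma \ref{lem:N_SOT_closed} and Theorem \ref{thm:CN_is_finite_sum_II_factors} produce a von Neumann algebra $\CN = \{L_\xi : \xi \in H^\bdd\}$ which is a finite direct sum of II$_1$ factors, together with a $*$-homomorphism $\CM \to \CN$. Then Lemma \ref{lem:u_def} gives a unitary $u \in \vN(\CM,\CM)(H, \overline{X} \rt{\CN} X)$ which is an algebra isomorphism, where $X = L^2(\CN,\tau_\CN)$ viewed as an $\CN$-$\CM$-bimodule; moreover $H$ special forces $\overline{X} \rt{\CN} X$ special, i.e. $\overline{\gamma}^{\,*}\overline{\gamma} = 1_{L^2(\CN,\tau_\CN)}$. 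This realizes $H$ as the standard C$^*$-Frobenius algebra $\Theta(\CN)$ of Lemma \ref{lem:2-cat_std_Frob_bimod_equ} (applied to $\bB = \vN$, with $X_\CN := X$), up to the algebra isomorphism $u$. Applying the locally essentially surjective and fully faithful $*$-bifunctor $\Theta$ from Lemma \ref{lem:2-cat_std_Frob_bimod_equ}, and composing with the equivalence $\sBMod_\vN(H,H) \simeq \sBMod_\vN(\Theta(\CN),\Theta(\CN))$ induced by $u$ (via Proposition \ref{lem:lr_module_map_induce_iso}-type transport of structure), we get $\sBMod_\vN(H,H) \simeq \sBMod_\vN(\Theta(\CN),\Theta(\CN)) \simeq \vN(\CN,\CN)$ — exactly the pattern of the proof of Theorem \ref{thm:real_in_multi_tensor}. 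More precisely, for each pair of special C$^*$-Frobenius algebras $A \in \vN(\CM,\CM)$, $B \in \vN(\CM',\CM')$ one obtains von Neumann algebras $\CN_A$, $\CN_B$ and natural equivalences of C$^*$-categories $\sBMod_\vN(A,B) \simeq \vN(\CN_A,\CN_B)$ fitting together compatibly with horizontal composition (using the $*$-bifunctoriality of $\Theta$ and the fact that $\Theta(\CN_A)$ recovers $A$ up to iso), which assembles into the asserted biequivalence.

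The main obstacle I expect is \textbf{bookkeeping the coherence}: one needs the family of equivalences $\sBMod_\vN(A,B) \simeq \vN(\CN_A,\CN_B)$ to be compatible with the relative horizontal composition $- \rt{B} -$ on the bimodule side and the relative tensor product $- \rt{\CN_B} -$ on the von Neumann side, so that they patch into an honest $*$-biequivalence rather than just a family of category-level equivalences. This is essentially the $*$-bifunctoriality statement at the end of Lemma \ref{lem:2-cat_std_Frob_bimod_equ} combined with the observation (already used in the proof of Theorem \ref{thm:real_in_multi_tensor}) that $\Theta$ restricted to the sub-bicategory of trivial algebras is the identity, so that the composite $\vN \xrightarrow{\sim} \sBMod_\vN \xrightarrow{\Theta} \sBMod_{\sBMod_\vN}$ relates back to $\vN$ correctly. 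A secondary, purely technical point is checking that $\CN_A$ depends on $A$ only up to isomorphism of von Neumann algebras (so that isomorphic special C$^*$-Frobenius algebras give the same object of $\vN$), which follows from the intrinsic description of $\CN_A$ as the commutant-type algebra $\{L_\xi\}$ together with Lemma \ref{lem:commutant_desc}. Granting the results of Sections \ref{sec:specialbimods}--\ref{sec:qsystemsinvN}, none of this requires new ideas — it is a matter of citing Lemmas \ref{lem:N_SOT_closed}, \ref{lem:u_def}, \ref{lem:2-cat_std_Frob_bimod_equ} and Theorem \ref{thm:CN_is_finite_sum_II_factors} in the right order.
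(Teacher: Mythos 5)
Your core construction agrees with the paper's (build $\CN$ from $H$ via Lemma \ref{lem:N_SOT_closed} and Theorem \ref{thm:CN_is_finite_sum_II_factors}, take $X=L^2(\CN,\tau_\CN)$ as in \eqref{equ:def_X_actions}, and use Lemma \ref{lem:u_def}), but the way you complete the argument has a genuine gap. First, what needs proving is misidentified: local essential surjectivity is automatic, since by Remark \ref{rem:BinsBiModB} the inclusion is locally an isomorphism onto the hom-categories $\sBMod_\vN(\one_\CM,\one_\CN)=\vN(\CM,\CN)$. The nontrivial point is \emph{biessential surjectivity on objects}: every special C$^*$-Frobenius algebra $H\in\vN(\CM,\CM)$ must be shown \emph{equivalent, as an object of the bicategory} $\sBMod_\vN$, to some $\one_\CN=L^2(\CN,\tau_\CN)$. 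Since $H$ and $\one_\CN$ live over different base algebras, no ``unitary isomorphism'' (a 2-morphism) can relate them; an equivalence of objects is a pair of 1-morphisms whose two relative tensor products are isomorphic to the respective units. Your proposal conflates this with the algebra isomorphism $H\cong\overline{X}\rt{\CN}X$ of Lemma \ref{lem:u_def}, which is only half of the required equivalence: one must check that $X$, viewed as an $\one_\CN$-$H$-bimodule through that isomorphism, and $\overline{X}$ are mutually inverse. The composite $\overline{X}\rt{\CN}X\simeq\one_H$ is indeed Lemma \ref{lem:u_def}, but the other composite $X\rt{H}\overline{X}\cong L^2(\CN,\tau_\CN)$ is the one new computation in the paper's proof (it follows from $p^H_{X\otimes\overline{X}}=\overline{\gamma}\,\overline{\gamma}^{\,*}$ together with specialness, i.e.\ $\overline{\gamma}^{\,*}\overline{\gamma}=1$), and it appears nowhere in your proposal.

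Second, your substitute strategy --- producing equivalences $\sBMod_\vN(A,B)\simeq\vN(\CN_A,\CN_B)$ for all pairs and ``assembling'' them --- shoulders exactly the coherence burden that the paper discharges at the outset by citing a Whitehead-type recognition theorem for bicategories (Lemma 3.1 in \cite{ng1}, Theorem 7.4.1 in \cite{jy}): a pseudofunctor that is locally an equivalence and biessentially surjective on objects is automatically a biequivalence, so no inverse pseudofunctor and no compatibility with $-\rt{B}-$ ever has to be checked. Deferring that assembly to ``bookkeeping'' via the bifunctoriality of $\Theta$ does not work as stated: $\Theta$ in Lemma \ref{lem:2-cat_std_Frob_bimod_equ} is based at a single object and requires a nonzero 1-morphism from it to every other object, which fails in $\vN$ globally (there are II$_1$ factors admitting no nonzero finite bimodule between them, e.g.\ the hyperfinite factor and a free group factor), and in any case $\Theta$ only lands in $\sBMod$ over one fixed tensor category, so it does not produce the cross-pair equivalences for algebras $A$, $B$ over different base algebras. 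The repair is to argue as the paper does: invoke the recognition theorem, note local full faithfulness and essential surjectivity are trivial for a full inclusion, and then supply the missing object-level equivalence, whose only nontrivial ingredient beyond Lemma \ref{lem:u_def} is the computation $X\rt{H}\overline{X}=\one_{L^2(\CN,\tau_\CN)}$ above.
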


\begin{proof}
By Lemma 3.1 in \cite{ng1} or Theorem 7.4.1 in \cite{jy} we only need to show that the inclusion is biessentially surjective on objects. Namely, we have to show that for every special C$^*$-Frobenius algebra $H$ in $\vN(\CM, \CM)$ there is a finite direct sum of II$_1$-factors $\CN$ such that the two special C$^*$-Frobenius algebras $L^2(\CN, \tau_\CN)$ and $H$ are equivalent in $\sBMod_\vN$, see Definition 1.6 in \cite{ng1}.

Let $\CN$ be the von Neumann algebra considered in Theorem \ref{thm:CN_is_finite_sum_II_factors} and $X$ be the $\CN$-$\CM$-bimodule defined by equation \mref{equ:def_X_actions}. Since $H$ is a special C$^*$-Frobenius algebra by assumption, $\overline{X}\rt{\CN} X$ is a special C$^*$-Frobenius algebra by Lemma \ref{lem:u_def}. It is clear that $X$ is an $L^2(\CN, \tau_\CN)$-$(\overline{X}\rt{\CN} X)$-bimodule. Then $X$ is an $L^2(\CN, \tau_\CN)$-$H$-bimodule with the right $H$-action induced by the algebra isomorphism of Lemma \ref{lem:u_def}. Similarly, $\overline{X}$ is a $H$-$L^2(\CN, \tau_\CN)$-bimodule. We have to show that $X$ and $\overline{X}$ are each other's inverse.
By Lemma \ref{lem:u_def}, we know that $\overline{X} \rt{\CN} X \simeq \one_H = H$. Moreover, $p^H_{X \otimes \overline{X}} = \overline{\gamma}\, \overline{\gamma}^{\, *}$, thus $X \rt{H} \overline{X} = \one_{L^2(\CN, \tau_\CN)} = L^2(\CN, \tau_\CN)$. This proves the statement. 
\end{proof}

\begin{rem}
After finishing this article, we noticed that the notion of idempotent completion of a locally idempotent complete bicategory has been introduced in \cite{cdr}. A locally idempotent complete bicategory is called idempotent complete if it is biequivalent to its idempotent completion. It is not hard to see that every rigid C$^*$-bicategory is locally idempotent complete, and its idempotent completion is the bicategory of special C$^*$-Frobenius algebras and special bimodules defined in Proposition \ref{prop:special_Fro_alg_bicat}. By Lemma \ref{lem:von_bim_alg_closed}, we know that the bicategory $\vN$ is idempotent complete. 
\end{rem}

We are now ready to prove Theorem \ref{thm:real_2_cat}.
Let $\bB$ be a rigid C$^*$-bicategory with finite-dimensional centers. 
For every object $\CN$, consider the decomposition
\begin{align*}
\bB(\CN,\CN) = \oplus_{j} \CN_j
\end{align*}
of the multitensor C$^*$-category $\bB(\CN, \CN)$ as finite direct sum of \textit{indecomposable} multitensor C$^*$-categories, denoted by $\CN_j$.
The tensor unit $\one_{\CN_j}$ of each $\CN_j$ is a standard (thus special) C$^*$-Frobenius algebra in $\bB(\CN,\CN)$, since $\one_{\CN_i} \otimes \one_{\CN_j} = \delta_{ij} \one_{\CN_j}$. Moreover, $\one_{\CN} = \oplus_j\one_{\CN_j}$ is a decomposition of the tensor unit $\one_\CN$ of $\bB(\CN,\CN)$. Each $\one_{\CN_j}$ is a maximal direct summand of $\one_\CN$ such that $\one_{\CN_j} \otimes \bB(\CN,\CN) \otimes \one_{\CN_j}$ is indecomposable.  
The idea here is to \lq\lq decompose" each object $\CN$ in $\bB$ into \lq\lq indecomposable subobjects" represented by the standard C$^*$-Frobenius algebras $\one_{\CN_j}$, viewing $\bB$ as a sub-C$^*$-bicategory of $\sBMod_\bB$. A similar idea of decomposing objects 
appears in \cite{z} at the end of Section 1, but to our knowledge it hasn't been exploited thereafter.

\begin{notat}\label{not:set_lambda}
Let $\Lambda$ be the union of all these standard C$^*$-Frobenius algebras $\one_{\CN_j}$, varying $\CN$ in $\bB$.
\end{notat}

\begin{lem}\label{lem:dec_bB}
With the above notation, we introduce a binary relation $\sim$ on $\Lambda$ by defining $\one_{\CL_i} \sim \one_{\CM_j}$ whenever $\one_{\CL_i} \otimes \bB(\CM, \CL) \otimes \one_{\CM_j} \neq 0$. Then $\sim$ is an equivalence relation.    
\end{lem}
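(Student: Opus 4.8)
The plan is to verify the three axioms of an equivalence relation in turn, after recording the reformulation that $\one_{\CL_i}\otimes\bB(\CM,\CL)\otimes\one_{\CM_j}$ is the full subcategory of $\bB(\CM,\CL)$ consisting of the $1$-morphisms $Z$ with $\one_{\CL_i}\otimes Z\otimes\one_{\CM_j}=Z$ (here I use that $\one_{\CL_i}$, $\one_{\CM_j}$ are idempotent direct summands of the horizontal units, $\one_{\CL_i}\otimes\one_{\CL_i}=\one_{\CL_i}$ and likewise for $\one_{\CM_j}$). Thus $\one_{\CL_i}\sim\one_{\CM_j}$ means precisely that such a nonzero $Z$ exists. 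Reflexivity is then immediate, since $\one_{\CN_j}\otimes\bB(\CN,\CN)\otimes\one_{\CN_j}=\CN_j$ is a nonzero (indecomposable) multitensor C$^*$-category. For symmetry I would take a nonzero witness $Z$ with $\one_{\CL_i}\otimes Z\otimes\one_{\CM_j}=Z$ and pass to its dual $\overline Z\in\bB(\CL,\CM)$: this is again nonzero, because in a rigid C$^*$-bicategory $1_Z$ factors through $Z\otimes\overline Z\otimes Z$ by the conjugate equations \mref{equ:conjeq}, so $\overline Z=0$ would force $Z=0$; and since dualization reverses horizontal composition while $\one_{\CL_i}$, $\one_{\CM_j}$ are (special) C$^*$-Frobenius algebras, hence self-dual $1$-morphisms, one gets $\overline Z\cong\one_{\CM_j}\otimes\overline Z\otimes\one_{\CL_i}$, i.e.\ $\one_{\CM_j}\sim\one_{\CL_i}$.

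Transitivity is the real content, and I expect it to be the main obstacle, since the horizontal composite of two nonzero $1$-morphisms may a priori vanish, so one cannot simply compose the two given witnesses. Here is the approach. Assume $\one_{\CL_i}\sim\one_{\CM_j}$ and $\one_{\CM_j}\sim\one_{\CP_k}$, with nonzero witnesses $Z_1\in\bB(\CM,\CL)$, $\one_{\CL_i}\otimes Z_1\otimes\one_{\CM_j}=Z_1$, and $Z_2\in\bB(\CP,\CM)$, $\one_{\CM_j}\otimes Z_2\otimes\one_{\CP_k}=Z_2$. Decompose $\one_{\CM_j}=\oplus_a e_a$ into \emph{simple} $1$-morphisms (using semisimplicity); these $e_a$ are the simple summands of $\one_{\CM_j}$ entering the block decomposition $\CM_j=\oplus_{a,b}(\CM_j)_{ab}$ with $(\CM_j)_{ab}=e_a\otimes\bB(\CM,\CM)\otimes e_b$ recalled at the end of Section \ref{sec:Cstarbicats}, and the key point is that indecomposability of $\CM_j$ forces \emph{every} $(\CM_j)_{ab}$ to be nonzero. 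Since $Z_1=\oplus_a(Z_1\otimes e_a)$ and $Z_2=\oplus_b(e_b\otimes Z_2)$ are nonzero, pick $a_0$ with $P:=Z_1\otimes e_{a_0}\neq 0$, $b_0$ with $Q:=e_{b_0}\otimes Z_2\neq 0$, and a nonzero ``bridge'' $\sigma\in(\CM_j)_{a_0 b_0}$. The candidate witness is $R:=Z_1\otimes\sigma\otimes Z_2=P\otimes\sigma\otimes Q\in\bB(\CP,\CL)$; it automatically satisfies $\one_{\CL_i}\otimes R\otimes\one_{\CP_k}=R$, so everything reduces to showing $R\neq 0$.

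For this I would tensor $R$ with its dual and ``pinch'': $R\otimes\overline R=P\otimes\sigma\otimes(Q\otimes\overline Q)\otimes\overline\sigma\otimes\overline P$. The conjugate equations for $Q$ provide a nonzero morphism $e_{b_0}\to Q\otimes\overline Q$ — the $e_{b_0}$-component of $\overline{\omega}\colon\one_\CM\to Q\otimes\overline Q$, the remaining components vanishing because $e_{b_0}\otimes(Q\otimes\overline Q)\otimes e_{b_0}=Q\otimes\overline Q$ — and, $e_{b_0}$ being simple, this realizes $e_{b_0}$ as a direct summand of $Q\otimes\overline Q$; together with $\sigma\otimes e_{b_0}=\sigma$, $e_{b_0}\otimes\overline\sigma=\overline\sigma$, it follows that $P\otimes\sigma\otimes\overline\sigma\otimes\overline P$ is a direct summand of $R\otimes\overline R$. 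The same argument applied to $\sigma$ shows $e_{a_0}$ is a direct summand of $\sigma\otimes\overline\sigma$, and since $P\otimes e_{a_0}=P$, $e_{a_0}\otimes\overline P=\overline P$, we conclude that $P\otimes\overline P$ is a direct summand of $R\otimes\overline R$. Finally $P\otimes\overline P\neq 0$ because $P\neq 0$ (again by the conjugate equations, $1_P$ factoring through $P\otimes\overline P\otimes P$). Hence $R\otimes\overline R\neq 0$, so $R\neq 0$, and therefore $\one_{\CL_i}\sim\one_{\CP_k}$, which completes the proof.
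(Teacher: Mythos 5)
Your proof is correct and takes essentially the same route as the paper: reflexivity is immediate, symmetry comes from passing to duals, and transitivity is obtained by picking simple summands $e_{a_0}, e_{b_0}$ of $\one_{\CM_j}$ seen by the two witnesses and bridging them with a nonzero $1$-morphism in $e_{a_0}\otimes\bB(\CM,\CM)\otimes e_{b_0}$, which exists by indecomposability of $\CM_j$. The only difference is that you spell out, via the conjugate-equation ``pinching'' argument, why the composite $Z_1\otimes\sigma\otimes Z_2$ is nonzero, a step the paper's proof asserts without detail.
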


\begin{proof}
It is clear that $\sim$ is reflexive. It is symmetric because $\overline{X} \in \one_{\CM_j} \otimes \bB(\CL, \CM) \otimes \one_{\CL_i}$ for every $X \in \one_{\CL_i} \otimes \bB(\CM, \CL) \otimes \one_{\CM_j}$.  Now, assume that $\one_{\CL_i} \sim \one_{\CM_j}$ and $\one_{\CM_j} \sim \one_{\CN_k}$, and choose non-zero 1-morphisms $X \in \one_{\CL_i} \otimes \bB(\CM, \CL) \otimes \one_{\CM_j}$ and $Y \in \one_{\CM_j} \otimes \bB(\CN, \CM) \otimes \one_{\CN_k}$. Let $e$ and $f$ be two simple sub-1-morphisms of $\one_{\CM_j}$ such that $X \otimes e$ and $f \otimes Y$ are non-zero. Then for every non-zero $Z \in e \otimes \bB(\CM, \CM) \otimes f$, we have $X \otimes Z \otimes Y \neq 0$. This shows that $\sim$ is transitive.
\end{proof}

\begin{rem}\label{rem:maximalitynsim}
Note that $\one_{\CM_i}$ and $\one_{\CM_j}$ are not equivalent, unless $i=j$, because each $\one_{\CM_i}$ is a maximal direct summand of $\one_\CM$ such that $\one_{\CM_i} \otimes \bB(\CM,\CM) \otimes \one_{\CM_i}$ is indecomposable. Thus, for every $\one_{\CM_j}$ and every $\CL$ in $\bB$, there exists at most one $\one_{\CL_i}$ such that $\one_{\CL_i} \sim \one_{\CM_i}$. 
\end{rem}

\begin{lem}\label{lem:connected_real}
    If $\bB(\CM, \CN) \neq 0$ and $\bB(\CM, \CM)$, $\bB(\CN, \CN)$ are indecomposable for every pair of objects $\CM$, $\CN$ in $\bB$, then there exists a locally fully faithful $*$-bifunctor from $\bB$ to $\vN$. 
\end{lem}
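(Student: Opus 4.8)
The plan is to reduce the statement to the already-established realization of indecomposable multitensor C$^*$-categories, Theorem \ref{thm:real_in_multi_tensor}, by first presenting $\bB$ as a bicategory of bimodules over one of its endomorphism categories, and then transporting that presentation into $\vN$.

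First I would fix an object $\CM_0$ of $\bB$. For every object $\CN$ the standing hypotheses ensure that $\bB(\CM_0,\CM_0)$ and $\bB(\CN,\CN)$ are indecomposable and $\bB(\CM_0,\CN)\neq 0$, so Lemma \ref{lem:indecom_f_alg} applies to the pair $\CM_0,\CN$ and produces a non-zero $X_\CN\in\bB(\CM_0,\CN)$ together with a solution $(\gamma_\CN,\overline{\gamma}_\CN)$ of the conjugate equations for $X_\CN$ and $\overline{X}_\CN$ satisfying $\overline{\gamma}^{\,*}_\CN\overline{\gamma}_\CN=1_{\one_\CN}$ (for $\CN=\CM_0$ one takes $X_{\CM_0}=\one_{\CM_0}$ with the trivial solution). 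Feeding these data into Lemma \ref{lem:2-cat_std_Frob_bimod_equ} with base object $\CM_0$ yields a locally essentially surjective and locally fully faithful *-bifunctor $\Theta:\bB\to\sBMod_{\bB(\CM_0,\CM_0)}$.

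Since $\bB(\CM_0,\CM_0)$ is an indecomposable multitensor C$^*$-category, Theorem \ref{thm:real_in_multi_tensor} next provides a finite direct sum of II$_1$ factors $\CN_0$ and a fully faithful tensor *-functor $F:\bB(\CM_0,\CM_0)\to\vN(\CN_0,\CN_0)$. I would then verify that $F$ induces a *-bifunctor $\sBMod_F:\sBMod_{\bB(\CM_0,\CM_0)}\to\sBMod_{\vN(\CN_0,\CN_0)}$: being a $^*$-preserving tensor functor, $F$ carries a special C$^*$-Frobenius algebra $(A,m,\iota)$ to $(F(A),F(m),F(\iota))$, a special bimodule and a bimodule map to their images, and it is compatible with the relative horizontal composition $-\rt{B}-$ because $F(p^B_{Y\otimes X})=p^{F(B)}_{F(Y)\otimes F(X)}$ and $F$ sends the chosen defining isometries to defining isometries. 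As $F$ is fully faithful, $\sBMod_F$ is locally fully faithful. This functoriality of the $\sBMod$ construction under fully faithful tensor *-functors --- essentially a bookkeeping exercise with the material of Sections \ref{sec:specialalgs}--\ref{sec:specialbimods}, and the only ingredient here that is not a direct citation --- is the step I expect to require the most care.

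Finally, a special C$^*$-Frobenius algebra in $\vN(\CN_0,\CN_0)$ is in particular a special C$^*$-Frobenius algebra in $\vN$ in the sense of Proposition \ref{prop:special_Fro_alg_bicat}, and likewise for the special bimodules and bimodule maps between such algebras, so $\sBMod_{\vN(\CN_0,\CN_0)}$ is a full sub-C$^*$-bicategory of $\sBMod_\vN$. By Lemma \ref{lem:von_bim_alg_closed} the inclusion $\vN\to\sBMod_\vN$ is a *-biequivalence, hence admits a *-bifunctor quasi-inverse $\Psi:\sBMod_\vN\to\vN$. The composite
\[
\bB\ \xrightarrow{\ \Theta\ }\ \sBMod_{\bB(\CM_0,\CM_0)}\ \xrightarrow{\ \sBMod_F\ }\ \sBMod_{\vN(\CN_0,\CN_0)}\ \hookrightarrow\ \sBMod_\vN\ \xrightarrow{\ \Psi\ }\ \vN
\]
is then a composition of locally fully faithful *-bifunctors, which is the desired locally fully faithful *-bifunctor from $\bB$ to $\vN$ (and by construction it sends every object of $\bB$ to a finite direct sum of II$_1$ factors).
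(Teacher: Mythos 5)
Your argument is correct and follows essentially the same route as the paper's own proof: Lemmas \ref{lem:indecom_f_alg} and \ref{lem:2-cat_std_Frob_bimod_equ} give the locally fully faithful *-bifunctor into $\sBMod_{\bB(\CM_0,\CM_0)}$, Theorem \ref{thm:real_in_multi_tensor} identifies $\bB(\CM_0,\CM_0)$ with a full tensor subcategory of some $\vN(\CA,\CA)$, and Lemma \ref{lem:von_bim_alg_closed} transports everything back to $\vN$. The only difference is expository: you spell out the induced bifunctor $\sBMod_F$ and the quasi-inverse of the biequivalence, steps the paper leaves implicit.
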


\begin{proof}
By Lemma \ref{lem:indecom_f_alg} and \ref{lem:2-cat_std_Frob_bimod_equ}, there is a locally fully faithful $*$-bifunctor from $\bB$ to $\sBMod_{\bB(\CM, \CM)}$ where $\CM$ is a fixed object in $\bB$. By Theorem \ref{thm:real_in_multi_tensor}, $\bB(\CM, \CM)$ can be identified with a full subcategory of $\vN(\CA,\CA)$ where $\CA$ is a finite direct sum of II$_1$ factors, and Lemma \ref{lem:von_bim_alg_closed} implies the result. 
\end{proof}

\begin{proof}[Proof of Theorem \ref{thm:real_2_cat}]
For a rigid C$^*$-bicategory with finite-dimensional centers $\bB$, the set $\Lambda$ defined above splits into equivalence classes, denoted by $\Lambda_k$, $k\in\Lambda/\!\!\sim$, with respect to the equivalence relation introduced in Lemma \ref{lem:dec_bB}. 
We identify $\bB$ with its image in $\sBMod_\bB$.
Let $\bB_k$ be the full sub-C$^*$-bicategory of $\sBMod_\bB$ such that the set of objects is $\Lambda_k$. 
Since $\bB_k$ satisfies the conditions in Lemma \ref{lem:connected_real}, there is a locally fully faithful $*$-bifunctor $\Phi_k$ from $\bB_k$ to $\vN$. We can define a locally fully faithful $*$-bifunctor $\Phi$ from $\bB$ to $\vN$ as follows
\begin{enumerate}
    \item For every $\CN$ in $\bB$, let $\Phi(\CN) \dfeq \oplus_{k \in \Lambda/\!\!\sim} \Phi(\CN)^{(k)}$, where 
\begin{align*}
    \Phi(\CN)^{(k)} \dfeq \begin{cases}
        \Phi_{k}(\one_{\CN_i}) & \mbox{if $\one_{\CN_i} \in \Lambda_k$ for some $i$},\\
        0 & \mbox{otherwise}.
    \end{cases}
\end{align*}

\item For every $X \in \bB(\CM, \CN)$, let $\Phi(X) := \oplus_{k \in \Lambda/\!\!\sim} X^{(k)}$, where  
    \begin{align*}
       X^{(k)} \dfeq \begin{cases}
        \Phi_{k}(\one_{\CN_i} \otimes X \otimes \one_{\CM_j}) & \mbox{if $\one_{\CN_i}$, $\one_{\CM_j} \in \Lambda_k$ for some $i,j$},\\
        0 & \mbox{otherwise}.
    \end{cases} 
    \end{align*}
        It is clear that $\Phi(X)$ is a finite $\Phi(\CN)$-$\Phi(\CM)$-bimodule since every $X^{(k)}$ is a $\Phi_k (\one_{\CN_i})$-$\Phi_k (\one_{\CM_j})$-bimodule. 
        Similarly, for every $f\in\bB(\CM,\CN)(X_1,X_2)$, let $\Phi(f) := \oplus_{k \in \Lambda/\!\!\sim} f^{(k)}$, where  
    \begin{align*}
       f^{(k)} \dfeq \begin{cases}
        \Phi_{k}(\one_{\CN_i} \otimes f \otimes \one_{\CM_j}) & \mbox{if $\one_{\CN_i}$, $\one_{\CM_j} \in \Lambda_k$ for some $i,j$},\\
        0 & \mbox{otherwise}.
    \end{cases} 
    \end{align*}
\end{enumerate}    
The tensorator and unitor of $\Phi$ are induced from those of the $\Phi_k$'s. 

    Let $X \in \bB(\CM, \CN)$. If $\one_{\CN_i} \otimes X$ is non-zero, then there exists a unique $\one_{\CM_j}$ such that $\one_{\CN_i} \otimes X = \one_{\CN_i} \otimes X \otimes \one_{\CM_j}$, see Remark \ref{rem:maximalitynsim}. Note that $X = \bigoplus_{(\one_{\CN_i}, \one_{\CM_j})} \one_{\CN_i} \otimes X \otimes \one_{\CM_j}$, where the sum extends over all pairs $(\one_{\CN_i}, \one_{\CM_j})$ such that $\one_{\CN_i} \sim \one_{\CM_j}$. 
    Then it is clear that $\Phi|_{\bB(\CM, \CN)}$ is a $*$-functor. Let $Y \in \bB(\CL, \CM)$. By the definition of the relative tensor product of bimodules, we have
    \begin{align*}
        \Phi(X) \otimes_{\Phi(\CM)} \Phi(Y) = \bigoplus_{l \in \Lambda/\!\!\sim} X^{(l)} \otimes_{\Phi(\CM)^{(l)}} Y^{(l)}.
    \end{align*}
Note that $X \otimes Y = \bigoplus_{(\one_{\CN_i}, \one_{\CM_j}, \one_{\CL_k})} \one_{\CN_i} \otimes X \otimes \one_{\CM_j} \otimes Y \otimes \one_{\CL_k}$, where the sum extends over all triples $(\one_{\CN_i}, \one_{\CM_j}, \one_{\CL_k})$ such that $\one_{\CN_i} \sim \one_{\CM_j} \sim \one_{\CL_k}$. By the $*$-bifunctoriality of each $\Phi_l$, we have that $\Phi(X) \otimes_{\Phi(\CM)} \Phi(Y) \simeq \Phi(X \otimes Y)$. Now it is not hard to check that $\Phi$ is a $*$-bifunctor.  
\end{proof}


\bigskip

\noindent\textbf{Acknowledgements.}
We thank Marcel Bischoff for comments on an earlier version of the manuscript. We also want to thank the anonymous referee for constructive criticisms and valuable comments. L.G. also acknowledges the \lq\lq MIUR Excellence Department Project" awarded to the Department of Mathematics, University of Rome Tor Vergata, CUP E83C18000100006, and the \lq\lq MIUR Excellence Department Project MatMod@TOV" awarded to the Department of Mathematics, University of Rome Tor Vergata, CUP E83C23000330006.

\end{document}